\documentclass[12pt]{article}

\usepackage[utf8]{inputenc}

\usepackage{amsfonts}
\usepackage{amssymb}
\usepackage{amsmath}
\usepackage{amsthm}
\usepackage[T2A]{fontenc}

\usepackage[russian]{babel}
\usepackage{amsthm}

\def \le {\leqslant}
\def \ge {\geqslant}

\theoremstyle{plain}

\topmargin -2cm \advance\voffset by 0in

\oddsidemargin -1cm \hoffset 0cm

\textheight = 24.5cm \textwidth 18.0cm \columnsep 10pt \columnseprule 0pt 
 
\newtheorem{lem}{Лемма}[section]
\newtheorem{foll}{Следствие}[section]
\newtheorem{theor}{Теорема}[section]
\newtheorem{theorbig}{Теорема}
\newtheorem{theor1}{Theorem}

\newenvironment{solve}{\begin{proof}[Доказательство]}{\end{proof}}
\begin{document}

\centerline{\Large \bf On the derivative of two functions from}
\centerline{\Large \bf Denjoy-Tichy-Uitz family.}
\vspace*{3mm}
\centerline{\Large Dmitry Gayfulin.}
\vspace*{10mm}
\centerline{\bf 1. Introduction}
The family of functions, we investigate in this article, was originally introduced by A.Denjoy~\cite{Denjoy} and later rediscovered by R Tichy and J. Uitz\cite{LambdaMink}. We denote the functions of the family by  $g_{\lambda}(x),$ where $\lambda\in(0,1)$. The definition will be given in the following section.
The most famous function of the family is the Minkiowski question-mark function. As we would see, it corresponds to $\lambda=\frac12$. All functions of the family are continuous, strictly increasing and map the segment $[0,1]$ onto itself. Moreover, they are singular i.e. $\forall  \lambda$~the derivative $g'_{\lambda}(x),$ if exists, can take only two values: $0$ and $+\infty.$ In this paper we consider two functions of the class which correspond to $\lambda$ equals $\frac{\sqrt5-1}2$ or $1-\frac{\sqrt5-1}2.$ The aim of this paper is to prove some theorems about essential conditions on x such that if the condition holds then the derivative 
$g'_{\lambda}(x)$ exists and has determined value. The constants used in our theorems are non-improvable.

Our paper is wirtten in Russian. However Inrtoduction and the formulation of main results (Sections 2,3 below) are written in English.
 
\vspace*{3mm}
\centerline{\bf 2. Definitions, notation and preliminaries}
The function $g_{\lambda}(x)$ where $\lambda\in(0,1)$ is defined as follows:
$$\forall\lambda~g_{\lambda}\biggl(\frac01\biggr)=0, ~g_{\lambda}\biggl(\frac11\biggr)=1.$$ 
Then if $g_{\lambda}(x)$ is defined for two consecutive Farey fractions $\frac pq<\frac rs$ we put
$$g_{\lambda}\biggl(\frac{p+q}{r+s}\biggr) =(1-\lambda)g_{\lambda}\biggl(\frac pq\biggr)+\lambda g_{\lambda}\biggl(\frac rs\biggr).$$
For irrational $x\in[0,1]$ the function $g_{\lambda}(x)$ is defined by continious arguments.

Let $x$ be represented as a regular continued fraction $x=[0;a_1,a_2,\ldots, a_t,\ldots].$ One can easily deduce the following identity
\begin{equation}
\begin{gathered}
g_{\lambda}(x)={\lambda}^{a_1-1}-{\lambda}^{a_1-1}(1-\lambda)^{a_2}+{\lambda}^{a_1+a_3-1}(1-\lambda)^{a_2}-\ldots\\+{\lambda^{a_1+a_3+\ldots+a_{2t-1}-1}(1-\lambda)^{a_2+a_4+\ldots+a_{2t-2}}}-\\-\lambda^{a_1+a_3+\ldots+a_{2t-1}-1}(1-\lambda)^{a_2+a_4+\ldots+a_{2t}}+\ldots
\end{gathered}
\end{equation}
This cam be proved by induction on length of the continued fraction. Particulary, for $\lambda={\varphi}^{-1}$ where $\varphi=\frac{\sqrt5+1}2$ is a positive root of the equation $x^2=x+1$ we have
$$
g_{\varphi^{-1}}(x)=\frac1{\varphi^{a_1-1}}-\frac1{\varphi^{a_1+2a_2-1}}+\frac1{\varphi^{a_1+2a_2+a_3-1}}-\frac1{\varphi^{a_1+2a_2+a_3+2a_4-1}}\ldots
$$
\\and for $\tau=1-\frac1{\varphi}$\\
$$
g_{\tau}(x)=\frac1{\varphi^{2a_1-2}}-\frac1{\varphi^{2a_1+a_2-2}}+\frac1{\varphi^{2a_1+a_2+2a_3-2}}-\frac1{\varphi^{2a_1+a_2+2a_3+a_4-2}}\ldots
$$
Denote by $S^{\varphi}_t(x)$ and  $S^{\tau}_t(x)$ the series
\begin{equation}
\begin{gathered}
{S^{\varphi}_t(x)=a_1+2a_2+a_3+2a_4+\ldots=\sum\limits_{i=1}^t a_i(\frac32+\frac12(-1)^i),}\\
{S^{\tau}_t(x)=2a_1+a_2+2a_3+a_4+\ldots=\sum\limits_{i=1}^t a_i(\frac32+\frac12(-1)^{i-1}).}
\end{gathered}
\end{equation}
\vspace*{3mm}
where $a_i$ are partial quotients of continued fraction representation of $x$.
\centerline{\bf 3. Main results}
\begin{theor1}
\label{res1}\quad\\
We put $\varkappa_1=4.$\\
$(\textbf{i})$1. Let for real irrational $x=[0;a_1,...,a_t,...] $ the following
inequality be valid:
$$
\limsup\limits_{t\to\infty}\frac{S^{\varphi}_{2t}(x)}{t}=\varkappa_{sup}(x)<\varkappa_1.
$$
Then the derivative $g'_{\varphi^{-1}}(x)$ exists and $g'_{\varphi^{-1}}(x)=+\infty $\\
$(\textbf{ii})$ For any positive $\delta$ there exists an irrational\\ ${y=[b_1,\ldots,b_t,\ldots]}$ such that:
$$
\lim\limits_{t\to\infty}\frac{S^{\varphi}_{2t}(y)}{t}<\varkappa_1+\delta
~\text{and}~ g'_{\varphi^{-1}}(y)=0.
$$
\end{theor1}
\begin{theor1}
\label{res2}\quad\\
There exists an effectively computable constant $\varkappa_2=13,06+$ such that, if for some real irrational $x=[0;a_1,...,a_t,...] $ the following inequality is valid:
$$
\liminf\limits_{t\to\infty}\frac{S^{\varphi}_{2t}(x)}{t}=\varkappa_{inf}(x)>\varkappa_2
$$
Then the derivative $g'_{\varphi^{-1}}(x)$ exists and equals $0$.\\
$(\textbf{ii})$~ For any positive $\delta$ there exists an irrational\\ ${y=[b_1,\ldots, b_t,\ldots]}$ such that:
$$
\lim\limits_{t\to\infty}\frac{S^{\varphi}_{2t}(y)}{t}>\varkappa_2-\delta~\text{and}~ g'_{\varphi^{-1}}(y)=+\infty.
$$
\end{theor1}
The second statements of the Theorems \ref{res1} and \ref{res2} show that the constants $\varkappa_1$ and $\varkappa_2$ are non-improvable.
\begin{theor1}\quad\\
$(\textbf{i})$~If all partial quotients of $x\in[0,1]\smallsetminus\mathbb{Q}$ are bounded by 2, then the derivative $g'_{\varphi^{-1}}(x)$ exists and equals $+\infty$\\
$(\textbf{ii})$ There exists a quadratic  irrationality $y\in[0,1]$ such that all l partial quotients of $y$ are bounded by $3$ and $g'_{\varphi^{-1}}(y)=0.$
\end{theor1}
\begin{theor1}
\label{res4}\quad\\
Theorems \ref{res1}, \ref{res2} and \ref{res3} hold for the derivative of the function $g_{\tau}(x)$ with the same constants $\varkappa_1, \varkappa_2$ and $S^{\varphi}_t(x)$ substituted by $S^{\tau}_t(x)$ in all statements.
\end{theor1}
\newpage
\centerline{\Large \bf Производные двух функций семейства}
\centerline{\Large \bf Денжуа-Тихого-Уитца.}
\vspace*{3mm}
\centerline{\Large Гайфулин Д. Р.}
\vspace*{8mm}
\centerline{\bf Аннотация}
Семейство сингулярных функций $g_{\lambda}(x),$ где $\lambda\in(0,1)$ было впервые рассмотрено Денжуа в 1938 году и переоткрыто Тихим и Уитцем в 1995 году. Самым известным представителем данного класса является функция Минковского $?(x),$ соответствующая значению $\lambda=\frac12.$ Для сингулярных функций большой интерес представляет вопрос поиска условий на число $x$, при которых можно заведомо сказать, что $g'_{\lambda}(x)=0$ или же $g'_{\lambda}(x)=\infty.$ Для функции Минковского данная задача была впервые рассмотрена в 2001 году Д.Парадизом, П.Виадером и Л.Бибилони и была в основном решена в 2008 году в работе Н.Г.Мощевитина, А.А.Душистовой и И.Д.Кана. В настоящей работе впервые исследуются производные функций $g_{\lambda}(x)$ для значений параметра $\lambda,$ равных $\frac{\sqrt5-1}2$ и $1-\frac{\sqrt5-1}2.$ Констатнты, полученные в работе, являются неулучшаемыми.

\textbf{Библиография}: 12 названий.

\textbf{Ключевые слова}: цепная дробь, континуант, функция Минковского.
\section{Введение}
Функция Минковского $?(x)$ была впервые рассмотрена Германом Минковским в 1904 году~\cite{Hmin}. Она непрерывно, монотонно и взаимно однозначно отображает отрезок $[0,1]$ на себя и определяется на множестве рациональных чисел следующим индуктивным образом: $?(\frac01)=0, ?(\frac11)=1.$ Далее, если для несократимых дробей $\frac pq<\frac rs$ определены значения $?(\frac{p}q)$ и $?(\frac{r}s),$ но $?(x)$ еще не определено ни в одной точке интервала $(\frac{p}q, \frac{r}s),$ то полагают $?(\frac pq \oplus\frac rs)=\frac{?(\frac pq)+?(\frac rs)}2,$ где операция $\oplus$ означает взятие медианты двух дробей: $\frac pq \oplus\frac rs=\frac{p+r}{q+s}.$ Для иррациональных чисел отрезка $[0,1]$ функция определяется по непрерывности.

Пусть
\begin{equation}
x=[0;a_1, a_2, \ldots, a_n, \ldots]=\cfrac{1}{a_1+\cfrac{1}{a_2+{\ldots}}}
\end{equation}
 - разложение в обыкновенную цепную дробь числа $x,$ в этом случае $a_i$ называются неполными частными данной цепной дроби. Будем вместо $[0;a_1, a_2, \ldots, a_n, \ldots]$ использовать более краткое обозначение\\ $[a_1, a_2, \ldots, a_n, \ldots].$ Верна следующая формула, выражающая значение функции Минковского в точке $x$ через неполные частные разложения числа $x$ в цепную дробь~\cite{Salem}: 
\begin{equation}
?(x)=\frac1{2^{a_1-1}}-\frac1{2^{a_1+a_2-1}}+\frac1{2^{a_1+a_2+a_3-1}}-\ldots+\frac{(-1)^{n-1}}{2^{a_1+a_2+\ldots+a_n-1}}+\ldots
\end{equation}
Известно также, что функция Минковского является предельной функцией распределения для значения обыкновенных конечных цепных дробей \cite{MDK}.
Отметим, что функция Минковского и похожие объекты возникает в различных задачах,
связанных с распределением рациональных точек, в частности, в работе Е.П. Голубевой \cite{G},
посвященной построению плоской выпуклой кривой с боьшим количеством целых точек.

В работах~\cite{Denjoy}~и~\cite{LambdaMink}~рассмотрен более общий класс функций, называемых функциями Денжуа-Тихого-Уитца $g_{\lambda}(x), ~\text{где}~\lambda\in(0,1)$. Они имееют такую же область определения и те же значения на концах отрезка $[0,1]:$ $g_{\lambda}(0)=?(0)=0,~g_{\lambda}(1)=?(1)=1,$ но значение в медианте определяется по более общему правилу
$$
g_{\lambda}(\frac pq \oplus\frac rs)=(1-\lambda)g_{\lambda}(\frac pq)+\lambda g_{\lambda}(\frac rs).
$$
Для $g_{\lambda}(x)$ в работе \cite{Zha} доказана аналогичная формула выражения через неполные частные числа $x=[a_1, a_2, \ldots, a_n, \ldots]$, имеющая вид
\begin{equation}
\begin{gathered}
\label{1}
g_{\lambda}(x)={\lambda}^{a_1-1}-{\lambda}^{a_1-1}(1-\lambda)^{a_2}+{\lambda}^{a_1+a_3-1}(1-\lambda)^{a_2}-\ldots+\\+{\lambda^{a_1+a_3+\ldots+a_{2t-1}-1}(1-\lambda)^{a_2+a_4+\ldots+a_{2t-2}}}-\lambda^{a_1+a_3+\ldots+a_{2t-1}-1}(1-\lambda)^{a_2+a_4+\ldots+a_{2t}}+\ldots
\end{gathered}
\end{equation}
Очевидно,что $g_{\frac12}(x)=?(x).$ В работе~\cite{LambdaMink} также показано, что все функции данного класса монотонны, непрерывны и сингулярны (то есть их производные могут принимать только значения $0$ или $+\infty$). 

Обозначим $\varphi=\frac{1+\sqrt{5}}2\approx1.618033989\ldots$ - положительный корень уравнения $x^2-x-1=0$. Эта величина традиционно называется "золотым сечением". В работе Е.Н. Жабицкой~\cite{Zha} было показано, что функция $g_{\lambda}(x)$  при $\lambda={(\frac{\sqrt5-1}2)}^2=\frac1{\varphi^2}\approx0.381966011$ является предельной функцией распределения для так называемых приведенных регулярных цепных дробей, т.е. дробей вида 
\begin{equation}
x =1 - \cfrac{1}{b_1-\cfrac{1}{b_{2} 
- \ldots - \cfrac{1}{b_l }}}, \quad b_i \geqslant 2,
\end{equation}
Обозначим $\tau=\frac1{\varphi^2}.$  Поскольку $1-\frac1{\varphi^2}=\frac1{\varphi},$ для $g_{\tau}(x)$ ввиду (\ref{1}) верна следующая формула: 
\begin{equation}
\begin{gathered}
\label{main}
g_{\tau}(x)=\frac1{\varphi^{2a_1-2}}-\frac1{\varphi^{2a_1+a_2-2}}+\frac1{\varphi^{2a_1+a_2+2a_3-2}}-\ldots
%\frac1{\varphi^{2a_1+a_2+2a_3+\ldots+2a_{2t-1}-2}}-\\-\frac1{\varphi^{2a_1+a_2+2a_3+\ldots+2a_{2t-1}+a_{2t}-2}}+\ldots
\end{gathered}
\end{equation}
Обозначим 
$$
S^{\tau}_t(x)=2a_1+a_2+2a_3+a_4+\ldots=\sum\limits_{i=1}^t a_i(\frac32+\frac12(-1)^{i-1}).
$$
Тогда формула (\ref{main}) для иррационального $x$ примет вид
\begin{equation}
\label{small}
g_{\tau}(x)=\sum\limits_{i=1}^{\infty}\frac{(-1)^{i-1}}{\varphi^{S^{\tau}_i(x)-2}}
\end{equation}
Введем также сумму 
$$
{S^{\varphi}_t(x)=a_1+2a_2+a_3+2a_4+\ldots=\sum\limits_{i=1}^t a_i(\frac32+\frac12(-1)^i).}
$$
 Поскольку  $\varphi$ удовлетворяет соотношению $\varphi^2=1-\varphi,$ формула~(\ref{1}) принимает для значения параметра $\lambda=\frac1{\varphi}$  вид 
\begin{equation}
\begin{gathered}
\label{main2}
g_{\varphi^{-1}}(x)=\frac1{\varphi^{a_1-1}}-\frac1{\varphi^{a_1+2a_2-1}}+\frac1{\varphi^{a_1+2a_2+a_3-1}}-\ldots
%\frac1{\varphi^{a_1+2a_2+a_3+\ldots+a_{2t-1}-1}}-\\-\frac1{\varphi^{a_1+2a_2+a_3+\ldots+a_{2t-1}+2a_{2t}-1}}+\ldots,
\end{gathered}
\end{equation}
или, с учетом введеных обозначений,
\begin{equation}
\label{phiform}
g_{\varphi^{-1}}(x)=\sum\limits_{i=1}^{\infty}\frac{(-1)^{i-1}}{\varphi^{S^{\varphi}_i(x)-1}}.
\end{equation}
Нетрудно видеть, что
$$
g_{\varphi^{-1}}(x) +g_{\tau}(1-x) =1.
$$
Отсюда следует, что $g_{\varphi^{-1}}(x)$
является аналогичной предельной функцией распределения для  цепных дробей вида
$$
x =\cfrac{1}{b_1-\cfrac{1}{b_{2} 
- \ldots - \cfrac{1}{b_l }}}, \quad b_i \geqslant 2.
$$

\section{Краткая история вопроса}
Впервые задача поиска условия на неполные частные $x,$ при которых $?'(x)$ равно нулю или $+\infty$ была поставлена в работе \cite{Parad} в 2001 году. В ней было выяснено, что ключевым критерием является предельное значение среднего арифметического неполных частных $$S(x)=\lim\limits_{t\to\infty}\frac{a_1+a_2+\ldots+a_t}t.$$ Были найдены константы $\kappa_1$ и $\kappa_2$ такие, что при 
\begin{equation}
\label{amin}
S(x)<\kappa_1~?'(x)=+\infty,~\text{а при}~S(x)>\kappa_2~ ?'(x)=0,
\end{equation}
если соответствующие производные существуют.  В 2007 году в работе \cite{MD} были найдены неулучшаемые значения констант $\kappa_1$ и $\kappa_2$ и доказано, что производная всегда существует при выполнении неравенств (\ref{amin}). В статье \cite{MDK} были также посчитаны неулучшаемые асимптотики функции $S(x),$ уточняющие критерий (\ref{amin}).

Необходимые или достаточные 
условия 
для обращения в ноль
или бесконечность производных функций $g_{\lambda}(x)$ при $\lambda\ne\frac12$
ранее не исследовались. Эта задача 
обсуждается, например, в обзоре Н.Г. Мощевитина \cite{MM}.
Она является существенно более сложной, потому что при $\lambda\ne\frac12$
вклад неполных частных $x$  с четными и нечетными номерами различен (см. формулу (\ref{1})).

\section{Благодарности}
Автор благодарит Н.Г. Мощевитина за постановку задачи и И.Д. Кана за полезные консультации при решении данной задачи и, в особенности, за многочисленные замечания по тексту в ходе написания настоящей статьи.
\section{Формулировка основных результатов}

Сформулмруем наши результаты, касающиеся случаев   $\lambda = \varphi^{-1}$ и $\lambda = \tau=\varphi^{-2} $ .
\begin{theorbig}
\label{res1}\quad\
Положим $\varkappa_1=4.$\\
$(\textbf{i})$ Пусть $x=[a_1,\ldots, a_t,\ldots]$ - иррациональное число и
$$
\limsup\limits_{t\to\infty}\frac{S^{\varphi}_{2t}(x)}{t}=\varkappa_{sup}(x)<\varkappa_1.
$$
Тогда производная $g'_{\varphi^{-1}}(x)$ существует и равна $+\infty$.\\
$(\textbf{ii})$ Для любого положительного $\delta$ существует иррациональное\\ ${y=[b_1,\ldots,b_t,\ldots]}$ такое, что:
$$
\lim\limits_{t\to\infty}\frac{S^{\varphi}_{2t}(y)}{t}<\varkappa_1+\delta
~\text{и}~ g'_{\varphi^{-1}}(y)=0.
$$
\end{theorbig}
\begin{theorbig}
\label{res2}\quad\\
Существует алгоритмически вычислимая с любой точностью константа $\varkappa_2\approx13.05\ldots$ такая, что выполнены следующие утверждения:\\
$(\textbf{i})$~Пусть $x=[a_1,\ldots, a_t,\ldots]$ - иррациональное число и
$$
\liminf\limits_{t\to\infty}\frac{S^{\varphi}_{2t}(x)}{t}=\varkappa_{inf}(x)>\varkappa_2\approx 13.05.
$$
Тогда производная $g'_{\varphi^{-1}}(x)$ существет и равна $0$.\\
$(\textbf{ii})$~Для любого положительного $\delta$ существует иррациональное\\ ${y=[b_1,\ldots, b_t,\ldots]}$ такое, что:
$$
\lim\limits_{t\to\infty}\frac{S^{\varphi}_{2t}(y)}{t}>\varkappa_2-\delta~\text{и}~ g'_{\varphi^{-1}}(y)=+\infty.
$$
\end{theorbig}
Вторые утверждения в теоремах \ref{res1} и \ref{res2} показывают, что константы $\varkappa_1$ и $\varkappa_2$ являются неулучшаемыми.
\begin{theorbig}
\label{res3}\quad\\
$(\textbf{i})$~Если все неполные частные числа $x\in[0,1]\smallsetminus\mathbb{Q}$ меньше либо равны 2, то $g'_{\varphi^{-1}}(x)=+\infty$\\
$(\textbf{ii})$ Существует $y\in[0,1]\smallsetminus\mathbb{Q}$ такое, что все неполные частные числа $y$ меньше либо равны $3$ и $g'_{\varphi^{-1}}(y)=0.$
\end{theorbig}
\begin{theorbig}
\label{res4}
Для производной функции $g_{\tau}(x)$ теоремы \ref{res1}, \ref{res2} и \ref{res3} верны с теми же самыми константами $\varkappa_1, \varkappa_2,$ при этом $S^{\varphi}_t(x)$ заменяется во всех формулировках на $S^{\tau}_t(x).$
\end{theorbig}
\section{Континуанты и цепные дроби. Леммы о производных}
Будем обозначать большими буквами $A, B$ последовательности натуральных чисел произвольной длины: $(a_1, a_2, \ldots ,a_n), (b_1, b_2, \ldots ,b_m),$ определим для них аналогично $S^{\tau}_n(A)=S^{\tau}_n([A]), S^{\varphi}_m(B)= S^{\varphi}_m([B]).$

Через $\langle A\rangle$ обозначается \textit{континуант} - функция от произвольного (возможно пустого) конечного набора натуральных чисел, определенная по следующему правилу:\\
$\langle~\rangle=1,$\\
$\langle a_1\rangle=a_1.$\\
Далее, для $n\geqslant2$ значение континуанта выражается рекуррентно:
\begin{equation}
\langle a_1,a_2,\ldots,a_{n-1},a_n\rangle=a_n\langle a_1,a_2,\ldots,a_{n-1}\rangle+\langle a_1,a_2,\ldots,a_{n-2}\rangle.
\end{equation}
Числа $a_i$ называются \textit{неполными частными} континуанта или соответствующей ему цепной дроби $[A]$. При этом там, где не это будет вызывать путаницы, мы будем понимать под континуантом как саму последовательность $(a_1, a_2,\ldots,a_{n-1},a_n),$ так и значение континуанта от нее.
Через $ { A}$  всегда будем обозначать последовательность неполных частных, то есть $(a_1, a_2, \ldots ,a_n)$, а  через $\overleftarrow{A}$ - в обратном порядке т.е. $(a_n, a_{n-1}, \ldots ,a_1)$.
Иногда   вместо $A$ мы будем использовать обозначение
$\overrightarrow{ A}$, чтобы подчеркнуть различие между прямым и обратным порядком.
 Обозначим также $A_-=(a_2, a_3, \ldots ,a_n)$ и 
$A^-$=$(a_1, a_2, \ldots ,a_{n-1})$, тогда выполнены соотношения:
\begin{equation}
\begin{split}
\label{A1}
[\overrightarrow{ A}]=[a_1,a_2,\ldots,a_{n-1},a_{n}]=\frac{\langle A_-\rangle}{\langle A\rangle}\\ [\overleftarrow{ A}]=[a_n,a_{n-1},\ldots,a_2,a_1]=\frac{\langle A^-\rangle}{\langle A\rangle}
\end{split}
\end{equation}
Известно следующее свойство (см, например, \cite{Knuth}):
\begin{equation}
\label{knu}
\langle X,Y\rangle=\langle X\rangle\langle Y\rangle+\langle X^-\rangle\langle Y_-\rangle=\langle X\rangle\langle Y\rangle(1+[\overleftarrow{X}][\overrightarrow{Y}]).
\end{equation}
Будем обозначать через $p_i(x)$ и $q_i(x)$  соответственно числители и знаменатели $i-$х подходящих дробей к $x\in[0,1],$ то есть $\frac{p_i(x)}{q_i(x)}=[0;a_1,a_2,\ldots,a_{i-1},a_i].$ В тех случаях, когда это не вызовет путаницы, мы будем опускать аргумент $x$ и писать просто $\frac{p_i}{q_i}.$

Несложно показать, что производная $g'_{\varphi^{-1}}(x)=g'_{\tau}(x)=0$ при $x\in\mathbb{Q}.$ В дальнейшем в данной работе мы будем исследовать  $g'_{\varphi^{-1}}(x)$ и $g'_{\tau}(x)$ только для иррациональных $x.$

Далее в этой части будут доказаны леммы об оценках сверху и снизу на величины
$$
\frac{g_{\varphi^{-1}}(x+\delta)-g_{\varphi^{-1}}(x)}{\delta}~~\text{и}~~\frac{g_{\tau}(x+\delta)-g_{\tau}(x)}{\delta}
$$
через $S^{\varphi}_t(x), S^{\tau}_t(x)$ и знаменатели подходящих дробей к иррациональному числу $x.$
Все леммы данного раздела будут доказаны только для случая $\delta>0$, поскольку случай $\delta<0$ аналогичен.

Прежде всего отметим следующее важное свойство.
\begin{lem}\cite[с. 23]{Khinchin}.
\label{medl}
\textit{Если $x$ - число, заключенное между двумя подходящими дробями $\frac{p_{l-1}}{q_{l-1}}$ и $\frac{p_l}{q_l},$ то $l+1$-е неполное частное $x$ - это максимальное $m$ такое, что $$\dfrac{p_{l+1}}{q_{l+1}}=\dfrac{p_{l-1}}{q_{l-1}}\oplus\underbrace{\dfrac{p_l}{q_l}\oplus\ldots\oplus\dfrac{p_l}{q_l}}_{m}$$ лежит по ту же самую сторону от~$x,$~что~и~$\frac{p_{l-1}}{q_{l-1}}.$}
\end{lem}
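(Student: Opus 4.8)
The plan is to reduce the discrete mediant iteration to a single real-parameter Möbius transformation and then read off the answer from its monotonicity. First I would record what the repeated mediant actually equals: starting from $\frac{p_{l-1}}{q_{l-1}}$ and forming the mediant with $\frac{p_l}{q_l}$ a total of $m$ times gives, by an immediate induction on $m$,
$$
\frac{p_{l-1}}{q_{l-1}}\oplus\underbrace{\frac{p_l}{q_l}\oplus\ldots\oplus\frac{p_l}{q_l}}_{m}=\frac{p_{l-1}+mp_l}{q_{l-1}+mq_l}.
$$
In particular, the recurrences $p_{l+1}=a_{l+1}p_l+p_{l-1}$ and $q_{l+1}=a_{l+1}q_l+q_{l-1}$ show that the value at $m=a_{l+1}$ is exactly $\frac{p_{l+1}}{q_{l+1}}$, matching the notation of the statement. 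This suggests introducing the function $f(t)=\dfrac{p_{l-1}+tp_l}{q_{l-1}+tq_l}$ of a real variable $t\ge 0$, so that $f(0)=\frac{p_{l-1}}{q_{l-1}}$ and $f(m)$ is precisely the $m$-fold mediant above.

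The crux is the monotonicity of $f$. Differentiating and using the standard convergent identity $p_l q_{l-1}-p_{l-1}q_l=(-1)^{l-1}$ gives
$$
f'(t)=\frac{p_l q_{l-1}-p_{l-1}q_l}{(q_{l-1}+tq_l)^2}=\frac{(-1)^{l-1}}{(q_{l-1}+tq_l)^2},
$$
whose sign is constant in $t$; hence $f$ is strictly monotone on $t\ge 0$. Monotonicity is what turns the geometric condition ``lying on a given side of $x$'' into the purely combinatorial condition of how the argument $t$ is ordered relative to a single threshold value.

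To locate that threshold I would identify $x$ itself as a value of $f$. Writing $x_{l+1}=[a_{l+1};a_{l+2},\ldots]$ for the $(l+1)$-th complete quotient, one has the familiar formula $x=\frac{x_{l+1}p_l+p_{l-1}}{x_{l+1}q_l+q_{l-1}}=f(x_{l+1})$, and since $x_{l+1}=a_{l+1}+1/x_{l+2}$ with $x_{l+2}>1$ one gets $a_{l+1}<x_{l+1}<a_{l+1}+1$. Because $f$ is strictly monotone, an integer $m$ yields a mediant $f(m)$ on the same side of $x=f(x_{l+1})$ as $\frac{p_{l-1}}{q_{l-1}}=f(0)$ if and only if $m$ lies on the same side of $x_{l+1}$ as $0$, that is, if and only if $m<x_{l+1}$; and the largest integer with $m<x_{l+1}$ is $\lfloor x_{l+1}\rfloor=a_{l+1}$, which is the assertion. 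I do not expect a serious obstacle here: the only points demanding care are fixing the sign convention so that ``the same side as $\frac{p_{l-1}}{q_{l-1}}$'' is stated correctly, and observing that the extremal value $m=a_{l+1}$ already reproduces the convergent $\frac{p_{l+1}}{q_{l+1}}$, so that the threshold is genuinely attained rather than merely approached.
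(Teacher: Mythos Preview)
The paper does not prove this lemma at all: it is stated with a citation to Khinchin's textbook \cite[с.~23]{Khinchin} and used as a black box, so there is no ``paper's own proof'' to compare against. Your argument is correct and is essentially the classical one: the $m$-fold mediant is $\frac{p_{l-1}+mp_l}{q_{l-1}+mq_l}$, the Möbius map $f(t)=\frac{p_{l-1}+tp_l}{q_{l-1}+tq_l}$ is strictly monotone by the determinant identity, $x=f(x_{l+1})$ with $a_{l+1}<x_{l+1}<a_{l+1}+1$, and monotonicity converts ``same side of $x$ as $f(0)$'' into $m<x_{l+1}$, whose largest integer solution is $a_{l+1}=\lfloor x_{l+1}\rfloor$. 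The only tacit assumption you rely on is that $x$ is irrational (so that $x_{l+2}$ exists and the strict inequality $x_{l+1}<a_{l+1}+1$ holds), which is exactly the setting in which the paper invokes the lemma.
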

В условиях предыдущей леммы если $i<m,$ то дробь $$\dfrac{p_{l-1}}{q_{l-1}}\oplus\underbrace{\dfrac{p_l}{q_l}\oplus\ldots\oplus\dfrac{p_l}{q_l}}_{i}$$ называется \textit{промежуточной дробью} числа $x$.
\begin{lem}
\label{lemge}
Пусть $x=[a_1,\ldots,a_t,\ldots]$ - иррациональное число, тогда для любого достаточно малого по абсолютной величине $\delta$ существует $t=t(x,\delta)$ такое, что 
\begin{equation}
\begin{split}
\label{mainlemge}
\frac{g_{\varphi^{-1}}(x+\delta)-g_{\varphi^{-1}}(x)}{\delta}\geqslant\frac{q_{t}q_{t-1}}{\varphi^{S^{\varphi}_t(x)+7}},\\
\frac{g_{\tau}(x+\delta)-g_{\tau}(x)}{\delta}\geqslant\frac{q_{t}q_{t-1}}{\varphi^{S^{\tau}_t(x)+9}}.
\end{split}
\end{equation}
\end{lem}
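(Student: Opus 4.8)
The plan is to reduce everything to the self-similar, piecewise-affine structure of $g_{\varphi^{-1}}$ on the Stern--Brocot subdivision. The key point is that if $\frac pq<\frac rs$ are Farey neighbours, then $g_{\varphi^{-1}}$ increases across $[\frac pq,\frac rs]$ by a \emph{single} term of the series (\ref{phiform}); in particular, on a cylinder $\Delta(a_1,\ldots,a_k,m)=\{[0;a_1,\ldots,a_k,m,\xi]:\xi\ge1\}$ the total oscillation of $g_{\varphi^{-1}}$ equals exactly $\varphi^{-(S^{\varphi}_k(x)+w_{k+1}m-1)}\bigl(1-\varphi^{-w_{k+1}}\bigr)$, where $w_{k+1}\in\{1,2\}$ is the weight index $k+1$ carries in $S^{\varphi}$; the analogous statement for $g_{\tau}$ follows from (\ref{small}), with offset $2$ in place of $1$ and the complementary weights. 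First I would record these identities, together with the elementary remark that a tail $\sum_{i\ge r}(-1)^{i-1}\varphi^{-(S^{\varphi}_i(x)-1)}$ is an alternating series whose successive terms shrink by a factor at most $\varphi^{-1}$, so its sum lies between its first term and $(1-\varphi^{-1})=\varphi^{-2}$ times it. Combined with the continuant identities (\ref{A1})--(\ref{knu}), which write cylinder lengths as $1/(q\,q')$ for explicit products of continuants, these are the only tools needed.

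Second, given a small $\delta>0$, I would use Lemma~\ref{medl} to follow the nested Stern--Brocot intervals $I_1\supset I_2\supset\cdots\to\{x\}$ along the path to $x$ and let $I_K$ be the last one that still contains $x+\delta$. Since $\delta>0$, a short check shows $I_{K+1}$ is the \emph{left} child $[\ell_K,\mu_K]$ of $I_K=[\ell_K,r_K]$ (with $\mu_K$ the mediant), so that $x<\mu_K\le x+\delta$ and $\delta\le|I_K|$. From the combinatorics of this position I would read off an index $t=t(x,\delta)$ and a Stern--Brocot fraction $\beta_0$ with $x<\beta_0\le x+\delta$ such that, on one hand, $\delta\le|I_K|\le C/(q_tq_{t-1})$ for an explicit constant $C$, and on the other hand $g_{\varphi^{-1}}(\beta_0)-g_{\varphi^{-1}}(x)\ge\varphi^{-(S^{\varphi}_t(x)+c)}$, the latter obtained by writing the difference via (\ref{phiform}), isolating the first summand in which $\beta_0$ and $x$ disagree, and invoking the alternating-tail remark. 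Dividing and collecting all constants into the exponent then gives
\[
\frac{g_{\varphi^{-1}}(x+\delta)-g_{\varphi^{-1}}(x)}{\delta}\ \ge\ \frac{g_{\varphi^{-1}}(\beta_0)-g_{\varphi^{-1}}(x)}{\delta}\ \ge\ \frac{q_tq_{t-1}}{\varphi^{\,S^{\varphi}_t(x)+7}},
\]
and since $|I_K|\to0$ as $\delta\to0$ we may assume $t$ as large as needed. The same argument with (\ref{small}) replacing (\ref{phiform}) — offset $2$ and complementary weights, which merely reshuffle the $\bigl(1-\varphi^{-w}\bigr)$ factors — gives the $g_{\tau}$ statement with $7$ replaced by $9$; alternatively it follows from $g_{\varphi^{-1}}(x)+g_{\tau}(1-x)=1$, which turns it into the $g_{\varphi^{-1}}$ statement at $1-x$ (with a negative increment) once the substitution $x\mapsto1-x$ permutes the partial quotients, trading $S^{\varphi}$ for $S^{\tau}$ at the cost of two extra powers of $\varphi$.

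The real difficulty is hidden in the words "read off an index $t$": one must choose $t$ as a function of the regime of $\delta$ so that the two bounds above hold simultaneously with an \emph{absolute} constant. When $x$ and $x+\delta$ separate at a coarse level of the tree (many cylinders between them) the natural choice is the length of their common continued-fraction prefix, and the bound is essentially tight — this is what forces the constant to be as large as $7$ (resp.\ $9$). The awkward case is when $x$ and $x+\delta$ fall into two \emph{adjacent} cylinders, equivalently when the first differing partial quotient of $x+\delta$ differs from that of $x$ by exactly one; then comparison with the nearby convergent $p_{t+1}/q_{t+1}$ is far too wasteful, since $g_{\varphi^{-1}}(p_{t+1}/q_{t+1})-g_{\varphi^{-1}}(x)$ can be as small as $\varphi^{-(S^{\varphi}_t(x)+a_{t+1}+2a_{t+2})}$, and one must instead pass to a larger index $t$ at which the big partial quotients $a_{t+1},a_{t+2}$ have been absorbed into $S^{\varphi}_t(x)$ itself, so the right-hand side becomes tiny and the inequality slack. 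Verifying that some admissible $t$ always exists, and then tracking the $\bigl(1-\varphi^{-w}\bigr)$ factors, the gap between $q_tq_{t-1}$ and the true cylinder length $1/(q_t(q_t+q_{t-1}))$, and the alternating-tail slack carefully enough to land on exactly $7$ and $9$, is the bulk of the work.
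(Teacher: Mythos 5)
Your skeleton does match the paper's: both arguments locate the minimal-denominator Stern--Brocot fraction separating $x$ from $x+\delta$, bound the increment from below by an explicit term of the series (\ref{phiform}) (resp.\ (\ref{small})), bound $\delta$ from above by a length expressed through continuants, and divide; and you correctly identify the ``awkward case'' (both points squeezed against the separating mediant) as the crux. But there is a genuine gap precisely there, and by your own admission you leave ``the bulk of the work'' undone.

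Concretely: the only upper bound on $\delta$ you actually establish is $\delta\le|I_K|\le C/(q_tq_{t-1})$. In the awkward case this is useless, because $\delta$ can be arbitrarily much smaller than $|I_K|$, and then the increment $g_{\varphi^{-1}}(x+\delta)-g_{\varphi^{-1}}(x)$ is correspondingly tiny, so the ratio cannot be controlled from $|I_K|$ alone. What the paper does --- and what is absent from your plan --- is to introduce a second counter $z$: the least number of applications of the mediant operation $\cdot\oplus\xi$ to $\xi_0$ (resp.\ $\xi_1$) before the result crosses $x$ (resp.\ $x+\delta$). This yields simultaneously the sharper bound $\delta\le 2/\bigl(q_{2t+1}^2(z-1)\bigr)$ (formula (\ref{in112})) and the lower bound $g_{\varphi^{-1}}(x+\delta)-g_{\varphi^{-1}}(x)\ge\varphi^{-(n_\varphi+k+1+2z)}$ (formula (\ref{lem1.1})), and the two are reconciled via the constraint $z-1\le a_{2t+2}$ (coming from Lemma \ref{medl}) together with the monotonicity of $u\mapsto u/\varphi^{2u}$, which turns $q_{2t+1}^2(z-1)\varphi^{-2(z-1)}$ into $q_{2t+1}q_{2t+2}\varphi^{-2a_{2t+2}}$ up to a bounded power of $\varphi$ and so absorbs the loss into $S^{\varphi}_{2t+2}(x)$. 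Your description of this step (``pass to a larger index $t$ \dots\ so the right-hand side becomes tiny and the inequality slack'') is not accurate: when $z-1$ is comparable to $a_{2t+2}$ the resulting inequality is essentially tight, not slack, and without $z-1\le a_{2t+2}$ and the $u/\varphi^{2u}$ comparison nothing forces it to hold. The same gap recurs, aggravated, for $g_{\tau}$: there the exponent in the increment grows like $2z$ while $a_{2t+2}$ enters $S^{\tau}$ with weight $1$, so the monotonicity trick fails for $z-1>a_{2t+2}/2$ and the paper needs a separate comparison with $\xi\oplus p_{2t+2}/q_{2t+2}$ (formula (\ref{c112m2})); this is not a mere reshuffling of factors. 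Your alternative route to the $g_{\tau}$ bound via $g_{\varphi^{-1}}(x)+g_{\tau}(1-x)=1$ is an attractive idea that could in principle replace that case analysis, but it too is left unverified (the index and parity shift under $x\mapsto1-x$, the boundary case $b_1=1$, and the fact that the $g_{\varphi^{-1}}$ statement must first be proved for $\delta<0$).
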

\begin{proof}
Будем доказывать утверждение леммы для обеих функций параллельно. 

Пусть $\xi- $ промежуточная или подходящая дробь к числу $x$ с минимальным знаменателем, попавшая в интервал $(x, x+\delta).$ По лемме \ref{medl} она имеет вид:
$$
\xi=\dfrac{p_{l-1}}{q_{l-1}}\oplus\underbrace{\dfrac{p_l}{q_l}\oplus\ldots\oplus\dfrac{p_l}{q_l}}_{k+1}, ~~k\geqslant0.
$$
С другой стороны, $\xi$ представима в виде $\xi=\xi_0\oplus\xi_1,$ где среди дробей $\xi_0$ и $\xi_1$ одна подходящая к числу $x$, а вторая подходящая или промежуточная. Обозначим за $\xi_0$ меньшую из дробей, а за $\xi_1$ - большую. Поскольку знаменатели $\xi_0$ и $\xi_1$ меньше знаменателя $\xi$, то выполнены неравенства: 
$$\xi_0<x<\xi<x+\delta<\xi_1.$$
Нетудно видеть, что $\xi_0$ - подходящая дробь к $x$. Действительно, если  $\xi_0~-$ промежуточная дробь, то по лемме \ref{medl} медианта $\xi=\xi_0\oplus\xi_1$ должна лежать по ту же сторону от $x,$ что и $\xi_0,$ противоречие. Отметим также, что если $k=0,$ то обе дроби $\xi_0$ и $\xi_1$ являются подходящими к $x.$ В этом случае подходящая дробь $\xi_1$ имеет меньший порядок, поскольку медианта $\xi_0$ и $\xi_1$ лежит по ту же сторону от $x$, что и $\xi_1.$ Таким образом, 
$$
\xi_0=\frac{p_l}{q_l},~~
\xi_1=\dfrac{p_{l-1}}{q_{l-1}}\oplus\underbrace{\dfrac{p_l}{q_l}\oplus\ldots\oplus\dfrac{p_l}{q_l}}_{k}.
$$

Поскольку $\xi_0<x<\xi_1,$ подходящая дробь $\xi_0 $ имеет четный порядок, т.е. $\xi_0=[a_1,\ldots, a_{2t}].$
Рассмотрим 2 случая:\\ 
1)$~k>0.$\\
2)$~k=0.$\\
Разберем случай 1). Имеем:
\begin{equation}
\label{gto1}
\xi_1=[a_1,\ldots, a_{2t}, k],~{\xi=[a_1,\dots,a_{2t}, k+1],}~k+1\leqslant a_{2t+1}.
\end{equation}
Пусть теперь $z$ - такое минимальное натуральное число, что выполнено хотя бы одно из условий:
$$\xi_{-}=\xi_0\oplus\underbrace{\xi\oplus\ldots\oplus\xi}_{z}>x~\text{или}~ {\xi_{+}=\xi_1\oplus\underbrace{\xi\oplus\ldots\oplus\xi}_{z}<x+\delta}.$$
Введем также дроби
$$
\xi_{--}=\xi_0\oplus\underbrace{\xi\oplus\ldots\oplus\xi}_{z-1}<x~\text{и}~ {\xi_{++}=\xi_1\oplus\underbrace{\xi\oplus\ldots\oplus\xi}_{z-1}>x+\delta}.
$$
Таким образом,
\begin{equation}
\label{ofmon}
\xi_0\leqslant\xi_{--}<x<\xi<x+\delta<\xi_{++}\leqslant\xi_{1}.
\end{equation}
Из монотонности функций $g_{\varphi^{-1}}(x)$ и $g_{\tau}(x)$ следует, что
\begin{equation}
\label{gtmon}
g_{\varphi^{-1}}(x+\delta)-g_{\varphi^{-1}}(x)\geqslant \min{[g_{\varphi^{-1}}(\xi)-g_{\varphi^{-1}}(\xi_{-}), ~g_{\varphi^{-1}}(\xi_{+})-g_{\varphi^{-1}}(\xi)]},
\end{equation}
аналогичное утверждение верно для функции $g_{\tau}(x)$. Посмотрим, как разлагаются $\xi_{-}$ и $\xi_{+}$  в цепные дроби:
\begin{equation}
\begin{gathered}
\xi_{-}=\xi_0\oplus\underbrace{\xi\oplus\ldots\oplus\xi}_{z}=\frac{\langle a_2,\ldots,a_{2t}\rangle+z\langle a_2,\ldots,a_{2t},k+1\rangle}{\langle a_1,\ldots,a_{2t}\rangle+z\langle a_1,\ldots,a_{2t},k+1\rangle}=\\=\frac{\langle a_2,\ldots,a_{2t},k+1, z\rangle}{\langle a_1,\ldots,a_{2t},k+1,z\rangle}=[a_1,\ldots,a_{2t},k+1,z];\\
\xi_{+}=\xi_1\oplus\underbrace{\xi\oplus\ldots\oplus\xi}_{z}=\frac{\langle a_2,\ldots,a_{2t}, k\rangle+z\langle a_2,\ldots,a_{2t},k+1\rangle}{\langle a_1,\ldots,a_{2t}, k\rangle+z\langle a_1,\ldots,a_{2t},k+1\rangle}=\\=\frac{\langle a_2,\ldots,a_{2t}, k\rangle+z\langle a_2,\ldots,a_{2t},k,1\rangle}{\langle a_1,\ldots,a_{2t}, k\rangle+z\langle a_1,\ldots,a_{2t},k,1\rangle}=\frac{\langle a_2,\ldots,a_{2t},k,1, z\rangle}{\langle a_1,\ldots,a_{2t},k,1,z\rangle}=[a_1,\ldots,a_{2t},k,1,z].
\end{gathered}
\end{equation}
Аналогично, при $z>1$:
\begin{equation}
\label{gto2}
\xi_{--}=[0;a_1,\ldots,a_{2t},k+1,z-1],~ \xi_{++}={[0;a_1,\ldots,a_{2t},k,1,z-1]}.
\end{equation}
При $z=1$ легко видеть, что $\xi_{--}=\xi_0,~ \xi_{++}=\xi_1.$
Обозначим ${S^{\varphi}_t(x)-1=n_{\varphi}},\\ S^{\tau}_t(x)-1=n_{\tau}.$ Посчитаем теперь разности
$$g_{\varphi^{-1}}(\xi)-g_{\varphi^{-1}}(\xi_{-}), g_{\varphi^{-1}}(\xi_{+})-g_{\varphi^{-1}}(\xi)~\text{и}~g_{\tau}(\xi)-g_{\tau}(\xi_{-}), {g_{\tau}(\xi_{+})-g_{\tau}(\xi).}$$
Цепные дроби $\xi$ и $\xi_{-}$ отличаются только последним неполным частным, поэтому, ввиду (\ref{small}), (\ref{phiform}):
\begin{equation}
\begin{split}
\label{gt1}
g_{\varphi^{-1}}(\xi)-g_{\varphi^{-1}}(\xi_{-})=\frac1{\varphi^{n_{\varphi}+k+1+2z}},\\
g_{\tau}(\xi)-g_{\tau}(\xi_{-})=\frac1{\varphi^{n_{\varphi}+2k+2+z}}.
\end{split}
\end{equation}
Поскольку $\xi=[a_1,\dots,a_{2t}, k,1],$ то, аналогично,
\begin{equation}
\begin{split}
\label{gt2}
g_{\varphi^{-1}}(\xi_{+})-g_{\varphi^{-1}}(\xi)=\frac1{\varphi^{n_{\varphi}+k+2+z}},\\
g_{\tau}(\xi_{+})-g_{\tau}(\xi)=\frac1{\varphi^{n_{\varphi}+2k+2z+1}}.
\end{split}
\end{equation}
Из (\ref{gt1}) и (\ref{gt2}) несложно видеть, что
\begin{equation}
\begin{split}
g_{\varphi^{-1}}(\xi)-g_{\varphi^{-1}}(\xi_{-})\leqslant  g_{\varphi^{-1}}(\xi_{+})-g_{\varphi^{-1}}(\xi),\\
g_{\tau}(\xi)-g_{\tau}(\xi_{-})\geqslant g_{\tau}(\xi_{+})-g_{\tau}(\xi),
\end{split}
\end{equation}
а, следовательно, ввиду (\ref{gtmon})
\begin{equation}
\begin{split}
\label{lem1.1}
g_{\varphi^{-1}}(x+\delta)-g_{\varphi^{-1}}(x)\geqslant \frac1{\varphi^{n_{\varphi}+k+1+2z}},\\
g_{\tau}(x+\delta)-g_{\tau}(x)\geqslant \frac1{\varphi^{n_{\varphi}+2k+2z+1}}.
\end{split}
\end{equation}
Оценим теперь $\delta$. Рассмотрим два подслучая: \\
1.1)~$\xi$ -подходящая дробь, \\
1.2)~$\xi -$ промежуточная дробь.\\
Случай $1.1$ разбивается на 2 подслучая:\\
1.1.1)~$z=1$\\
1.1.2)~$z\ge2$\\
\\
1.1.1) Имеем ввиду леммы \ref{medl} $k+1=a_{2t+1},$ 
\begin{equation}
\label{o23}
\delta\leqslant\xi_1-\xi_0=\frac1{\langle a_1,\ldots,a_{2t}\rangle\langle a_1,\ldots,a_{2t},k\rangle}\leqslant \frac2{q_{2t}q_{2t+1}}.
\end{equation}
Таким образом, применяя оценки (\ref{o23}) и (\ref{lem1.1}) и учитывая, что $\varphi^2>2$, получаем:
\begin{equation}
\begin{gathered}
\label{c111}
\frac{g_{\varphi^{-1}}(x+\delta)-g_{\varphi^{-1}}(x)}{\delta}\geqslant \frac{q_{2t}q_{2t+1}}{\varphi^{n_{\varphi}+k+3}}=\frac{q_{2t}q_{2t+1}}{\varphi^{a_1+2a_2+\ldots+2a_{2t}+a_{2t+1}+2}},\\
\frac{g_{\tau}(x+\delta)-g_{\tau}(x)}{\delta}\geqslant \frac{q_{2t}q_{2t+1}}{\varphi^{n_{\tau}+2k+3}}=\frac{q_{2t}q_{2t+1}}{\varphi^{2a_1+a_2+\ldots+a_{2t}+2a_{2t+1}+1}}.
\end{gathered}
\end{equation}

$1.1.2)$ Аналогично, $k+1=a_{2t+1}.$ Поскольку $\xi_{--}<x$, то по лемме  \ref{medl} $z-1\leqslant a_{2t+2}.$ В этом случае получаем, используя (\ref{gto1}) и (\ref{gto2}):
\begin{multline}
\delta\le\xi_{++}-\xi_{--}=(\xi_{++}-\xi)+(\xi-\xi_{--})=\\=
\frac1{\langle a_2,\ldots,a_{2t},k,1, z-1\rangle\langle a_1,\ldots,a_{2t},k,1\rangle}+\frac1{\langle a_1,\ldots,a_{2t},k+1\rangle\langle a_1,\ldots,a_{2t},k+1,z-1\rangle}=\\=
\frac1{q_{2t+1}}(\frac1{\langle a_1,\ldots,a_{2t},k,1, z-1\rangle}+\frac1{\langle a_1,\ldots,a_{2t},k+1,z-1\rangle})\leqslant\\ \leqslant\frac2{q_{2t+1}(z-1)\langle a_1,\ldots,a_{2t},k+1\rangle}=\frac2{q^2_{2t+1}(z-1)}.
\end{multline}
То есть, 
\begin{equation}
\begin{split}
\label{in112}
\delta\le\frac2{q^2_{2t+1}(z-1)}.
\end{split}
\end{equation}
Пользуясь тем, что функция $\frac{x}{\varphi^{2x}}$ убывает на множестве натуральных чисел, из (\ref{lem1.1}), (\ref{in112}) получаем оценку:
\begin{equation}
\label{c112}
\frac{g_{\varphi^{-1}}(x+\delta)-g_{\varphi^{-1}}(x)}{\delta}\geqslant \frac{q^2_{2t+1}(z-1)}{\varphi^{n_{\varphi}+k+2(z-1)+5}}\geqslant \frac{q^2_{2t+1}(a_{2t+2}+1)}{\varphi^{n_{\varphi}+k+2(a_{2t+2}+1)+5}}\geqslant \frac{q_{2t+1}q_{2t+2}}{\varphi^{a_1+2a_2+\ldots+2a_{2t}+a_{2t+1}+2a_{2t+2}+6}}
\end{equation}
Для функции $g_{\tau}(x)$ разберем 2 подслучая:\\
\textbf{(i)}$~z-1\leqslant\frac{a_{2t+2}}2$. В этом случае, применяя оценки (\ref{lem1.1}) и (\ref{in112}), заключаем:
\begin{equation}
\label{c112m1}
\frac{g_\tau(x+\delta)-g_{\tau}(x)}{\delta}\geqslant \frac{q^2_{2t+1}(z-1)}{\varphi^{n_{\tau}+2k+2(z-1)+5}}\geqslant \frac{q^2_{2t+1}a_{2t+2}}{2\varphi^{n_{\tau}+2k+a_{2t+2}+5}}\geqslant \frac{q_{2t+1}q_{2t+2}}{\varphi^{2a_1+a_2+\ldots+a_{2t}+2a_{2t+1}+a_{2t+2}+7}}.
\end{equation}
Во втором неравенстве мы снова воспользовались монотонностью функции $\frac{x}{\varphi^{2x}}.$\\
\textbf{(ii)}~$z-1>\frac{a_{2t+2}}2.$\\
Напомним, что $\xi=\frac{p_{2t+1}}{q_{2t+1}}.$ Следовательно, по лемме \ref{medl} $x<\frac{p_{2t+2}}{q_{2t+2}}\oplus\xi<\xi.$ Отсюда по монотонности функции $g_{\tau}(x)$ имеем ввиду (\ref{small}):
\begin{equation}
g_{\tau}(x+\delta)-g_{\tau}(x)\geqslant g_{\tau}(\xi)-g_{\tau}(\xi\oplus\frac{p_{2t+2}}{q_{2t+2}})=
\frac1{\varphi^{n_{\tau}+2a_{2t+1}+a_{2t+2}+1}}.
\end{equation}
Теперь, применяя (\ref{in112}) и заменяя $z-1$ на $\frac{a_{2t+2}}2$, окончательно оцениваем:
 \begin{equation}
\label{c112m2}
\frac{g_\tau(x+\delta)-g_{\tau}(x)}{\delta}\geqslant \frac{q^2_{2t+1}(z-1)}{\varphi^{n_{\tau}+2a_{2t+1}+a_{2t+2}+1}}\geqslant \frac{q^2_{2t+1}a_{2t+2}}{2\varphi^{n_{\tau}+2a_{2t+1}+a_{2t+2}+1}}\geqslant \frac{q_{2t+1}q_{2t+2}}{\varphi^{2a_1+a_2+\ldots+a_{2t}+2a_{2t+1}+a_{2t+2}+5}}.
\end{equation}
1.2)~Поскольку $\xi$ - промежуточная дробь, то $z=1$ и 
$$
\delta\le\frac1{\langle a_1,\ldots,a_{2t}\rangle\langle a_1,\ldots,a_{2t}, k\rangle}\leqslant \frac1{kq^2_{2t}}.
$$
Поскольку $k<a_{2t+1}+1,$ имеем аналогично предыдущим случаям следующие оценки:
\begin{equation}
\begin{split}
\label{c12}
\frac{g_{\varphi^{-1}}(x+\delta)-g_{\varphi^{-1}}(x)}{\delta}\geqslant \frac{kq^2_{2t}}{\varphi^{n_{\varphi}+k+3}}\geqslant \frac{(a_{2t+1}+1)q^2_{2t}}{\varphi^{n_{\varphi}+a_{2t+1}+4}}\geqslant \frac{q_{2t+1}q_{2t}}{\varphi^{a_1+2a_2+\ldots+2a_{2t}+a_{2t+1}+4}},\\
\frac{g_{\tau}(x+\delta)-g_{\tau}(x)}{\delta}\geqslant \frac{kq^2_{2t}}{\varphi^{n_{\tau}+2k+3}}\geqslant \frac{(a_{2t+1}+1)q^2_{2t}}{\varphi^{n_{\tau}+2a_{2t+1}+3}}\geqslant \frac{q_{2t+1}q_{2t}}{\varphi^{2a_1+a_2+\ldots+a_{2t}+2a_{2t+1}+4}}.
\end{split}
\end{equation}
Теперь разберем 2), в этом случае соответствующие цепные дроби имеют следующий вид:
\begin{equation}
\begin{split}
\xi_1=[a_1,\ldots,a_{2t-1}],\\
\xi=[a_1,\ldots,a_{2t}+1],\\
\xi_{-}=[a_1,\ldots,a_{2t},1,z].\\
%\xi_{--}=[0;a_1,\ldots,a_{2t},1,z-1],\\
%\xi_{+}=[0;a_1,\ldots,a_{2t}+1,z],\\
%\xi_{++}=[0;a_1,\ldots,a_{2t}+1,z-1]
\end{split}
\end{equation}
Аналогично первому случаю оценим:
\begin{equation}
\begin{gathered}
g_{\varphi^{-1}}(x+\delta)-g_{\varphi^{-1}}(x)\geqslant \min{(g_{\varphi^{-1}}(\xi)-g_{\varphi^{-1}}(\xi_{-}), g_{\varphi^{-1}}(\xi_{+})-g_{\varphi^{-1}}(\xi))},\\
g_{\tau}(x+\delta)-g_{\tau}(x)\geqslant \min{(g_{\tau}(\xi)-g_{\tau}(\xi_{-}), g_{\tau}(\xi_{+})-g_{\tau}(\xi))},\\
g_{\varphi^{-1}}(\xi)-g_{\varphi^{-1}}(\xi_{-})=\frac1{\varphi^{n_{\varphi}+1+2z}}, \quad
g_{\tau}(\xi)-g_{\tau}(\xi_{-})=\frac1{\varphi^{n_{\tau}+z+1}},\\
g_{\varphi^{-1}}(\xi_{+})-g_{\varphi^{-1}}(\xi)=\frac1{\varphi^{n_{\varphi}+1+z}}, \quad
g_{\tau}(\xi_{+})-g_{\tau}(\xi)=\frac1{\varphi^{n_{\tau}+1+2z}}.
\end{gathered}
\end{equation}
А значит:
\begin{equation}
\begin{split}
\label{lem1.2}
g_{\varphi^{-1}}(x+\delta)-g_{\varphi^{-1}}(x)\geqslant \frac1{\varphi^{n_{\varphi}+1+2z}},\\
g_{\tau}(x+\delta)-g_{\tau}(x)\geqslant \frac1{\varphi^{n_{\tau}+1+2z}}.
\end{split}
\end{equation}
Поскольку оценки на величины $g_{\varphi^{-1}}(x+\delta)-g_{\varphi^{-1}}(x)$ и $g_{\tau}(x+\delta)-g_{\tau}(x)$ совпадают, то достаточно доказать утверждение только для функции $g_{\varphi^{-1}}(x)$

Оценим $\delta.$ Так же, как и в случае 1), рассмотрим два подслучая:\\
2.1)~$\xi$ -подходящая дробь;\\
2.2)~$\xi -$ промежуточная дробь.\\
Случай 2.1) дополнительно разбивается на два подслучая:\\
2.1.1)~$z=1;$\\
2.1.2)~$z\geqslant2.$\\
Разберем все случаи.\\
2.1.1)~Получаем $a_{2t+1}=1, \delta\leqslant\xi_1-\xi_0=\frac1{q_{2t-1}q_{2t}},$ следовательно
\begin{equation}
\label{c211}
\frac{g_{\varphi^{-1}}(x+\delta)-g_{\varphi^{-1}}(x)}{\delta}\geqslant \frac{q_{2t}q_{2t-1}}{\varphi^{n_{\varphi}+3}}=\frac{q_{2t}q_{2t-1}}{\varphi^{a_1+2a_2+\ldots+2a_{2t}+2}}
\end{equation}
2.1.2)~Аналогично, $a_{2t+1}=1,$ и ввиду леммы \ref{medl} $z-1\leqslant a_{2t+2}$. Кроме того, соответствующие цепные дроби в этом случае имеют вид:
\begin{equation}
\begin{split}
\xi_{--}=[a_1,\ldots,a_{2t},1,z-1],\\
\xi_{+}=[a_1,\ldots,a_{2t}+1,z],\\
\xi_{++}=[a_1,\ldots,a_{2t}+1,z-1].
\end{split}
\end{equation}
Следовательно $\delta$ можно оценить следующим образом:
\begin{multline}
\delta\le\xi_{++}-\xi_{--}=(\xi_{++}-\xi)+(\xi-\xi_{--})=\\=\frac1{\langle a_2,\ldots,a_{2t}+1, z-1\rangle\langle a_1,\ldots,a_{2t}+1\rangle}+\frac1{\langle a_1,\ldots,a_{2t}+1\rangle\langle a_1,\ldots,a_{2t},1,z-1\rangle}\leqslant \\ \leqslant \frac2{q_{2t+1}(z-1)q_{2t+1}}=\frac2{(z-1)q^2_{2t+1}}.
\end{multline}
Отсюда получаем:
\begin{equation}
\label{c212}
\frac{g_{\varphi^{-1}}(x+\delta)-g_{\varphi^{-1}}(x)}{\delta}\geqslant \frac{(z-1)q^2_{2t+1}}{\varphi^{n_{\varphi}+5+2(z-1)}}\geqslant \frac{(a_{2t+2}+1)q^2_{2t+1}}{\varphi^{n_{\varphi}+5+2(a_{2t+2}+1)}}\geqslant \frac{q_{2t+1}q_{2t+2}}{\varphi^{a_1+2a_2+\ldots+2a_{2t}+a_{2t+1}+2a_{2t+2}+7}}.
\end{equation}
2.2)~Аналогично случаю 1.2) из леммы \ref{medl} получчаем, что $z=1,$ тогда $\delta\leqslant\xi_1-\xi_0=\frac1{q_{2t-1}q_{2t}},$ следовательно
\begin{equation}
\label{c22}
\frac{g_{\varphi^{-1}}(x+\delta)-g_{\varphi^{-1}}(x)}{\delta}\geqslant \frac{q_{2t}q_{2t-1}}{\varphi^{n_{\varphi}+3}}=\frac{q_{2t}q_{2t-1}}{\varphi^{a_1+2a_2+\ldots+2a_{2t}+2}}.
\end{equation}
Объединяя все рассмотренные случаи, из формул (\ref{c111}), (\ref{c112}), (\ref{c112m1}), (\ref{c112m2}), (\ref{c12}), (\ref{c211}), (\ref{c212}) и (\ref{c22}) получаем утверждение леммы.
\end{proof}

\begin{lem}
\label{lemle}
Пусть $x=[a_1,\ldots,a_t,\ldots]$ - иррациональное число, тогда для любого достаточно малого по абсолютной величине $\delta$ существует $t=t(x,\delta)$ такое, что 
\begin{equation}
\begin{split}
\label{lem3}
\frac{g_{\varphi^{-1}}(x+\delta)-g_{\varphi^{-1}}(x)}{\delta}\leqslant \frac{q^2_{t}}{\varphi^{S^{\varphi}_t(x)-5}},\\
\frac{g_{\tau}(x+\delta)-g_{\tau}(x)}{\delta}\leqslant \frac{q^2_{t}}{\varphi^{S^{\tau}_t(x)-5}}.
\end{split}
\end{equation}
\end{lem}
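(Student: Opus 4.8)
The plan is to keep the entire geometric setup of the proof of Lemma~\ref{lemge} and to reverse the direction of every estimate: to bound the ratio from above we need an \emph{upper} bound for $g_{\varphi^{-1}}(x+\delta)-g_{\varphi^{-1}}(x)$ (and likewise for $g_\tau$) together with a \emph{lower} bound for $\delta$; as there, it suffices to treat $\delta>0$. So I would again fix the convergent or intermediate fraction $\xi$ of least denominator in $(x,x+\delta)$, write $\xi=\xi_0\oplus\xi_1$ with $\xi_0=\tfrac{p_{2t}}{q_{2t}}$ of even order, and introduce the integer $z$ and the fractions $\xi_\pm,\xi_{\pm\pm}$ exactly as before, so that $\xi_{--}\le x<\xi<x+\delta<\xi_{++}\le\xi_1$ and at least one of $\xi_->x$, $\xi_+<x+\delta$ holds. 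The whole case split survives unchanged: case $1$ ($k>0$) splits into $1.1$ ``$\xi$ a convergent'' (equivalently $a_{2t+1}=k+1$) and $1.2$ ``$\xi$ a proper intermediate fraction'' (equivalently $a_{2t+1}\ge k+2$, in which case one checks $z=1$); within $1.1$ one separates $z=1$ and $z\ge2$; case $2$ ($k=0$) is analogous with $\xi_1=\tfrac{p_{2t-1}}{q_{2t-1}}$.

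For the numerator I would use monotonicity of $g_{\varphi^{-1}}$ and $g_\tau$ to write $g(x+\delta)-g(x)\le g(\beta)-g(\alpha)$ for a pair $\alpha\le x<x+\delta\le\beta$ of rational points each sharing a long initial block of partial quotients with $x$. In the ``generic'' sub-cases the pair $\alpha=\xi_{--}$, $\beta=\xi_{++}$ already does this; in sub-case $1.2$ (and in its case-$2$ analogue) I would take the sharper pair $\alpha=[a_1,\ldots,a_{2t},a_{2t+1}+1]$, which still lies to the left of $x$, and $\beta=[a_1,\ldots,a_{2t},k]=\xi_1$, which lies to the right of $x+\delta$. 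Since $\alpha$ and $\beta$ then differ from $\xi$ only in one or two trailing partial quotients, formulas~(\ref{phiform}) and~(\ref{small}) express $g(\beta)-g(\alpha)$ as a sum of at most two terms of the form $\varphi^{-(S^{\varphi}_{m}(x)+O(1))}$, $m\in\{2t,2t+1,2t+2\}$, which collapses to one such term using $2<\varphi^{2}$. Because the parities in~(\ref{phiform}) and~(\ref{small}) are opposite, $g_{\varphi^{-1}}$ and $g_\tau$ need separate exponent arithmetic here, exactly as in Lemma~\ref{lemge}.

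For the denominator I would use minimality of $z$: if $\xi_->x$ then $x<\xi_-<\xi<x+\delta$ gives $\delta>\xi-\xi_-$, and if $\xi_+<x+\delta$ then $\delta>\xi_+-\xi$; by~(\ref{knu}) each of these is a continuant reciprocal $\tfrac1{\langle\cdot\rangle\langle\cdot\rangle}$ — literally the quantities bounded from above in Lemma~\ref{lemge}, now read as lower bounds, supplemented by $z-1\le a_{2t+2}$ (Lemma~\ref{medl}) whenever $z\ge2$. In sub-case $1.2$ this is not enough and I would sharpen it: there $x$ lies to the left of $[a_1,\ldots,a_{2t},a_{2t+1}]$, so $\delta>\xi-[a_1,\ldots,a_{2t},a_{2t+1}]=\dfrac{a_{2t+1}-k-1}{\langle a_1,\ldots,a_{2t},k+1\rangle\,q_{2t+1}}$, which is far larger than $\xi-\xi_-$.

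Finally, in each sub-case I would divide the numerator bound by the $\delta$ bound and push the remaining combinatorial factors — powers of $z,k,a_{2t+1},a_{2t+2}$ and absolute constants — into the exponent using $m\le\varphi^{m}$, the monotonicity of $x\mapsto x\varphi^{-2x}$ on the positive integers (already invoked in Lemma~\ref{lemge}) and $2<\varphi^{2}$, choosing the index $s\in\{2t,2t+1,2t+2\}$ that makes $\tfrac{q_s^2}{\varphi^{S^{\varphi}_s(x)-5}}$ (for $g_\tau$: the same $s$ with $S^{\varphi}$ replaced by $S^{\tau}$) come out with the constant $5$ and nothing weaker. I expect this last bookkeeping to be the real obstacle: the coarse enclosure $[x,x+\delta]\subseteq[\xi_0,\xi_1]$ together with $\delta>\xi-\xi_-$ is off by a bounded but genuinely non-trivial factor precisely in sub-case $1.2$ and in case $2$ with small $a_{2t+1}$, so recovering the exact exponent there forces both the one-cylinder-deeper localization of $x$ and $x+\delta$ described above and a careful choice of $s$, and the proof ultimately reduces to checking a short explicit list of inequalities of the type $(k+2)(k+3)(1-\varphi^{-3})\le\varphi^{k+4}$, one per sub-case and per function, for all admissible parameter values.
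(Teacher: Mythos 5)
Your plan reproduces the paper's proof of Lemma \ref{lemle}: the same points $\xi,\xi_0,\xi_1,\xi_{\pm},\xi_{\pm\pm}$, the same numerator bound $g(x+\delta)-g(x)\leqslant g(\xi_{++})-g(\xi_{--})$ evaluated from (\ref{phiform}), and the same lower bound $\delta>\xi-\xi_{-}$ (when $\xi_{-}>x$) or $\delta>\xi_{+}-\xi$ (when $\xi_{+}<x+\delta$) extracted from the minimality of $z$, with the case split merely reorganized (the paper splits on which of the two minimality conditions fires rather than on whether $\xi$ is a convergent or an intermediate fraction). The one place you diverge is the claim that in sub-case $1.2$ the crude bound $\delta>\xi-\xi_{-}$ ``is not enough'': the paper shows it suffices, since there $\xi-\xi_{-}\geqslant\bigl((k+3)^{2}q_{2t}^{2}\bigr)^{-1}$ and the target is taken with index $s=2t$, so your extra one-cylinder-deeper localization of $x$ is harmless but unnecessary.
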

\begin{proof}
Мы проведем доказательство только для функции $g_{\varphi^{-1}}(x),$ и при $\delta>0$, поскольку случаи, когда рассматриваемая функция - $g_{\tau}(x)$ или $\delta$ - отрицательно, совершенно аналогичны.\\ Таким же образом, как и в лемме \ref{lemge} определим числа $\xi,\xi_0, \xi_1, \xi_{++}, \xi_{--}, \xi_{+}, \xi_{-},n_{\varphi}$.\\ Ввиду (\ref{ofmon}) и монотонности функции $g_{\varphi}(x)$, выполнено неравенство
\begin{equation}
\begin{split}
g_{\varphi^{-1}}(x+\delta)-g_{\varphi^{-1}}(x)\leqslant  g_{\varphi^{-1}}(\xi_{++})-g_{\varphi^{-1}}(\xi_{--})
\end{split}
\end{equation}
 Рассмотрим 2 случая:\\
1)$~z=1.$ В этом случае $\xi_{--}=\xi_0, \xi_{++}=\xi_1.$ Следовательно, подставляя (\ref{gto1}) в (\ref{phiform}), получаем:
\begin{equation}
g_{\varphi^{-1}}(\xi_{++})-g_{\varphi^{-1}}(\xi_{--})=\frac1{\varphi^{n_{\varphi}+k}}.
\end{equation}
2)$~z\geqslant2.$ В этом случае из (\ref{gto1}), (\ref{gto2}) и формулы (\ref{phiform}) имеем
\begin{multline}
g_{\varphi^{-1}}(\xi_{++})-g_{\varphi^{-1}}(\xi_{--})= (g_{\varphi^{-1}}(\xi_{++})-g_{\varphi^{-1}}(\xi))+( g_{\varphi^{-1}}(\xi)-g_{\varphi^{-1}}(\xi_{--}))=\\=\frac1{\varphi^{n_{\varphi}+k+2z-1}}+\frac1{\varphi^{n_{\varphi}+k+1+z}}\leqslant \frac1{\varphi^{n_{\varphi}+k+z-1}}.
\end{multline}
Объединяя случаи, получаем 
\begin{equation}
g_{\varphi^{-1}}(\xi_{++})-g_{\varphi^{-1}}(\xi_{--})\leqslant \frac1{\varphi^{n_{\varphi}+k+z-1}}.
\end{equation}
Оценим теперь $\delta.$ Рассмотрим два случая:\\
1)~$\xi_{-}>x$\\
2)~$\xi_{-}<x, {\xi_{+}<x+\delta}$\\
1)Ввиду (\ref{ofmon}) $\delta>\xi-\xi_{-}.$ Рассмотрим еще $2$ подслучая:\\
1.1)~$z=1.$\\
1.2)~$z\geqslant2.$\\
Разберем все случаи:\\
1.1)~$\xi_{-}=[a_1,\ldots,a_{2t},k+2],~ k+2\le a_{2t+1}.$\\
Тогда
\begin{equation}
\xi-\xi_{-}=\frac1{\langle a_1,\ldots,a_{2t}, k+2\rangle\langle a_1,\ldots,a_{2t}, k+1\rangle}\geqslant \frac1{(k+3)^2q^2_{2t}}.
\end{equation}
Следовательно
\begin{equation}
\label{lc11}
\frac{g_{\varphi^{-1}}(x+\delta)-g_{\varphi^{-1}}(x)}{\delta}\leqslant \frac{q^2_{2t}(k+3)^2}{\varphi^{n_{\varphi}+k}}\leqslant \frac{q^2_{2t}}{\varphi^{n_{\varphi}-4}}=\frac{q^2_{2t}}{\varphi^{a_1+2a_2+\ldots+2a_{2t}-5}}.
\end{equation}
1.2)~Получаем по лемме \ref{medl} $\xi=[a_1,\ldots,a_{2t},a_{2t+1}], $ а $\xi_{-}=[a_1,\ldots,a_{2t},a_{2t+1},a_{2t+2}],$\\
${k+1=a_{2t+1}}.$\\
Следовательно
\begin{equation}
\delta>\xi-\xi_{-}=\frac1{\langle a_1,\ldots,a_{2t}, a_{2t+1}\rangle\langle a_1,\ldots,a_{2t},a_{2t+1},z\rangle}\geqslant \frac1{(z+1)q^2_{2t+1}}.
\end{equation}
А значит
\begin{equation}
\label{lc12}
\frac{g_{\varphi^{-1}}(x+\delta)-g_{\varphi^{-1}}(x)}{\delta}\leqslant \frac{(z+1)q^2_{2t+1}}{\varphi^{n_{\varphi}+a_{2t+1}+z-1}}\leqslant \frac{q^2_{2t+1}}{\varphi^{a_1+2a_2+\ldots+2a_{2t}+a_{2t+1}-1}}.
\end{equation}
$2)$ Аналогично прошлому случаю имеем\\
$\xi=[0;a_1,\ldots,a_{2t},a_{2t+1}],~ k+1=a_{2t+1}.$\\
Поскольку $\delta>\xi_{++}-\xi,$ получаем:
\begin{equation}
\delta>\xi_{++}-\xi=\frac1{\langle a_1,\ldots,a_{2t}, a_{2t+1}\rangle\langle a_1,\ldots,a_{2t},a_{2t+1}-1,1,z\rangle}\geqslant \frac1{(z+1)q^2_{2t+1}}.
\end{equation}
А значит, как и в случае 1.2
\begin{equation}
\label{lc2}
\frac{g_{\varphi^{-1}}(x+\delta)-g_{\varphi^{-1}}(x)}{\delta}\leqslant \frac{q^2_{2t+1}}{\varphi^{a_1+2a_2+\ldots+2a_{2t}+a_{2t+1}-3}}.
\end{equation}
Объединяя все рассмотренные случаи, из формул (\ref{lc11}), (\ref{lc12}) и (\ref{lc2}) получаем утверждение леммы.
\end{proof}

{\itshape Замечание.}
Леммы \ref{lemle} и \ref{lemge} являются непосредственным обобщением соответствующих лемм из \cite{MDK}.
\section{Сравнение континуантов}
 Существенным инструметном наших доказательств является сравнение континуантов.
Мы будем пользоваться различными методами из работы \cite{kkk} и их обобщениям.

Пусть $x$ имеет разложение в цепную дробь $[a_1, a_2, \ldots, a_t, \ldots].$ Обозначим:
\begin{equation}
\begin{split}
\varkappa_{\varphi}(x)=\lim\limits_{t\to\infty}  \frac{a_1+a_3+\ldots+a_{2t-1}}t+2\frac{a_2+a_4+\ldots+a_{2t}}t=\lim\limits_{t\to\infty}\frac{S^{\varphi}_t(x)}t\\
\varkappa_{\tau}(x)=\lim\limits_{t\to\infty}  2\frac{a_1+a_3+\ldots+a_{2t-1}}t+\frac{a_2+a_4+\ldots+a_{2t}}t=\lim\limits_{t\to\infty}\frac{S^{\tau}_t(x)}t.
\end{split}
\end{equation}
Из лемм \ref{lemge} и \ref{lemle} видно, что для того, чтобы оценивать производную $g_{\varphi^{-1}}(x)$ исходя из $\varkappa_{\varphi}(x)$, достаточно найти асимптотику максимума и минимума континуантов с заданным $S^{\varphi}_t(x)\sim \varkappa_{\varphi}(x)t$ при $t\to\infty,$ аналогично для $S^{\tau}_t(x).$ Обозначим через $M^{\varphi}(n, S_n)$ множество континуантов $\langle A\rangle$ с фиксированной длиной $n$ и фиксированной суммой $S^{\varphi}_n(A)=S_n$, введем также $\max(M^{\varphi}(n, S_n))$ и $\min(M^{\varphi}(n, S_n))$ - соответственно максимальное и минимальное значение континуантов из множества $M^{\varphi}(n, S_n)$. Для решения поставленной задачи достаточно уметь находить их с точностью до некоторой, не зависящей от $n$ и $A$ константы. То есть требуется найти такие функции $f^{\varphi}(n, S_n)$ и $g^{\varphi}(n, S_n),$ что\\
 $\max(M^{\varphi}(n, S_n)) \asymp f^{\varphi}(n, S_n),$\\
$\min(M^{\varphi}(n, S_n))\asymp g^{\varphi}(n, S_n).$\\
Без ограничения общности будем в дальнейшем считать, что $n$ четно. 

Аналогично можем определить множество  $M^{\tau}(n, S_n).$ Заметим, что между множествами  $M^{\varphi}(n, S_n)$ и  $M^{\tau}(n, S_n)$ существует биективное соответствие: если $\langle \overrightarrow{A}\rangle\in M^{\varphi}(n, S_n),$ то, поскольку все неполные частные $\langle\overrightarrow{A}\rangle$ при замене  $\langle\overrightarrow{A}\rangle\to\langle\overleftarrow{A}\rangle$ изменят четность индекса (т.к. $n$ четно),  $\langle \overleftarrow{A}\rangle$ лежит в множестве $M^{\tau}(n, S_n).$  А значит, так как $\langle\overrightarrow{A}\rangle=\langle\overleftarrow{A}\rangle,$  максимумы и минимумы по данным множествам совпадают. Поэтому достаточно исследовать только $\max^{\varphi}(n, S_n)$ и $\min^{\varphi}(n, S_n).$

Будем до конца данной части для простоты опускать верхний индекс $\varphi$ и писать просто $M(n, S_n), \max(M(n, S_n)), \min(M(n, S_n))$ и $S_n=S_n(A)$.

Для нахождения  $\max(n, S_n)$ и $\min(n, S_n)$ будем пользоваться следующим методом: пусть $\langle A\rangle=\langle a_1,\ldots, a_n\rangle$ - произвольный континуант из $M(n, S_n).$ Будем действовать на на него некоторыми преобразованиями, то есть изменять континуант так, чтобы длина и сумма $S_n$ сохранялась. Замену континуанта $\langle X\rangle$ на  $\langle Y\rangle$ обозначим $\langle X\rangle\to\langle Y\rangle.$ Будем пользоваться заменами следующего вида:\\
1)~Отражение - замена 
\begin{equation}
\langle\overrightarrow{P}, \overrightarrow{ Q}, \overrightarrow{R}\rangle\to\langle\overrightarrow{P}, \overleftarrow{Q}, \overrightarrow{R}\rangle
\end{equation}
где 
$Q=(a_i,a_{i+1}\ldots, a_{j-1},a_j)$, $i$ и $j$ имеют одинаковую четность.\\
Пример: замена $\langle1, 2, 3, 4, 5 \rangle\to\langle1,2,4,3,5\rangle,$ в данном случае $P=(1,2),\\ Q=(3,4), R=(5)$\\
2)~Единичная вариация (термин взят из \cite{MDK})~- замена
\begin{equation}
\begin{split}
\langle a_1,\ldots, a_{i-1},a_i,a_{i+1},\ldots, a_{j-1},a_j,a_{j+1},\ldots, a_n\rangle\to\\
\langle a_1,\ldots, a_{i-1},a_i-x,a_{i+1},\ldots,a_{j-1}, a_j+x,a_{j+1},\ldots, a_n\rangle
\end{split}
\end{equation}
где $x\in\mathbb{Z}, a_i-x>0, a_j+x>0,$  $i$ и $j$ имеют одинаковую четность.\\
Пример: замена $\langle1, 2, 3, 4, 5 \rangle\to\langle3,2,3,4,3\rangle,$\\
3) $(1,2)$-вариация - замена одного из двух видов. В первом случае одно неполное частное четного индекса уменьшается (увеличивается) на $x$, а другое неполное частное с нечетным индексом увеличивается (уменьшается) на $2x$.
\begin{equation}
\begin{split}
\langle a_1,\ldots, a_{i-1},a_i,a_{i+1},\ldots, a_{j-1},a_j,a_{j+1},\ldots, a_n\rangle\to\\
\langle a_1,\ldots, a_{i-1},a_i-x,a_{i+1},\ldots, a_{j-1},a_j+2x,a_{j+1},\ldots, a_n\rangle
\end{split}
\end{equation}
где $x\in\mathbb{Z}, a_i-x>0, a_j+2x>0,$  $i$ четно, а $j$ нечетно. Пример: 
$$
\langle1, 2, 3, 4, 5 \rangle\to\langle1,3,1,4,5\rangle 
$$
Во втором случае одно неполное частное четного индекса уменьшается (увеличивается) на $x$, а два других неполных частных с нечетным индексом соответственно увеличиваются (уменьшаются) на $x$.:
\begin{equation}
\begin{split}
\langle a_1,\ldots,a_{i-1},a_i,a_{i+1},\ldots,a_{j-1},a_j,a_{j+1},\ldots, a_k,\ldots, a_n\rangle\to\\
\langle a_1,\ldots,a_{i-1},a_i-x,a_{i+1},\ldots,a_{j-1},a_j+x,a_{j+1},\ldots, a_k+x,\ldots, a_n\rangle
\end{split}
\end{equation}
Где $x\in\mathbb{Z}, a_i-x>0, a_j+x>0, a_k+x>0,~i$ четно, а $j$ и $k$ нечетны.\\
Пример:
$$\langle1, 2, 3, 4, 5 \rangle\to\langle1,3,2,4,4\rangle$$

Очевидно, что все рассмотренные замены сохраняют длину $n$ и сумму $S_n$, то есть не выводят из множества $M(n,S_n)$. Нетрудно видеть, что действуя на произвольный континуант композицией указанных преобразований, можно получить любой континуант из множества  $M(n, S_n)$, в том числе минимальный и максимальный.

Задача поиска $\min(n, S_n)$ достаточно проста, ответ на это вопрос дает теорема \ref{mintheor}, доказанная ниже. Задача по нахождения максимума сложнее. Для иллюстрации метода его поиска сформулируем следующую лемму.
\begin{lem}
\label{minalgor}
Пусть существует такой континуант $\langle N_{max}\rangle,$ что для любого континуанта $\langle A\rangle\in M(n, S_n)$ найдется последовательность преобразований вида 1-3
\begin{equation}
\label{posl}
\langle A\rangle\to\langle A_1\rangle\to\langle A_2\rangle\to\ldots\to\langle A_t\rangle\to\ldots\to\langle N_{max}\rangle,
\end{equation}
где $\frac{\langle A_i\rangle}{\langle A_{i+1}\rangle}<2$, причем количество тех значений $i$, для которых ${\langle A_i\rangle>\langle A_{i+1}\rangle},$ ограничено некоторой, не зависящей от $A$ и $n,$ константой $m$. Тогда $\langle N_{max}\rangle\asymp max(n, S_n)$
\end{lem}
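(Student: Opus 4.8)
The plan is to deduce $\langle N_{max}\rangle \asymp \max(n,S_n)$ from two elementary inequalities: $\langle N_{max}\rangle \le \max(n,S_n)$, and $\langle A\rangle \le 2^{m}\langle N_{max}\rangle$ for every $\langle A\rangle \in M(n,S_n)$. Taking the maximum over $\langle A\rangle$ in the second gives $\max(n,S_n)\le 2^{m}\langle N_{max}\rangle$, so the two implied constants ($1$ and $2^{m}$) do not depend on $n$ or $S_n$, which is exactly what $\asymp$ requires here.

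For the first inequality I would simply observe that each of the moves 1--3 preserves both the length and the value $S_n$, hence maps $M(n,S_n)$ into itself; since the chain~(\ref{posl}) starts at a member of $M(n,S_n)$, its endpoint $\langle N_{max}\rangle$ also lies in $M(n,S_n)$, whence $\langle N_{max}\rangle \le \max(n,S_n)$. For the second, fix an arbitrary $\langle A\rangle = \langle A_{0}\rangle \in M(n,S_n)$ and take the chain $\langle A_{0}\rangle \to \langle A_{1}\rangle \to \cdots \to \langle A_{t}\rangle = \langle N_{max}\rangle$ given by the hypothesis (it may be assumed finite: after the at most $m$ ratio-decreasing steps the integer sequence $\langle A_i\rangle$ is non-decreasing and bounded, hence eventually constant). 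Put $r_i = \langle A_i\rangle/\langle A_{i+1}\rangle$, so that $\prod_{i=0}^{t-1} r_i = \langle A_0\rangle/\langle N_{max}\rangle$ telescopes. Every step with $\langle A_i\rangle \le \langle A_{i+1}\rangle$ contributes a factor $r_i \le 1$; every step with $\langle A_i\rangle > \langle A_{i+1}\rangle$ contributes a factor $1 < r_i < 2$ by the ratio assumption, and by hypothesis there are at most $m$ such steps. Multiplying, $\langle A_0\rangle / \langle N_{max}\rangle < 2^{m}$, the desired bound.

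I do not expect a genuine obstacle inside this lemma: once the bookkeeping is set up it is a single telescoping estimate, using only that the moves fix $(n,S_n)$ and that the "bad" (ratio-decreasing) steps are both few and mild. The real content of the method is hidden in the hypothesis itself — exhibiting a concrete $\langle N_{max}\rangle$ and proving that from any continuant of $M(n,S_n)$ one can reach it by a chain in which every ratio $\langle A_i\rangle/\langle A_{i+1}\rangle$ stays below $2$ while only boundedly many ratios exceed $1$. Verifying that is where the detailed analysis of the reflections and of the $(1,2)$-variations, together with how they redistribute the weighted sum $S_n$, will have to be done, and I would expect that to be the heart of the subsequent sections rather than anything in the present lemma.
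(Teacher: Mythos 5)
Your proof is correct and is essentially the argument the paper gives: the paper applies the hypothesized chain directly to the maximal continuant and observes it can shrink by at most $2^m$, while you run the same telescoping product for an arbitrary $\langle A\rangle$ and then take the maximum, which amounts to the same thing. Your explicit remark that $\langle N_{max}\rangle\in M(n,S_n)$ (so $\langle N_{max}\rangle\le\max(n,S_n)$) is a detail the paper leaves implicit but is the same reasoning.
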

\begin{proof}
Пусть $\langle N_{max}\rangle$ - не максимальный континуант. Тогда применим к максимальному континуанту последовательность преобразований из условия, которая преобразует его в  $\langle N_{max}\rangle$. Очевидно, что он уменьшится не более, чем в $2^m$ раз, то есть  $\langle N_{max}\rangle$ отличается от максимума не более, чем в $2^m$ раз, где $m$ не зависит от $n$ и $A$. Что и требовалось доказать.
\end{proof}
Назовем последовательность преобразований из формулировки леммы \ref{minalgor} \textit{алгоритмом приведения к максимуму}. Аналогичная лемма, очевидно, верна для алгоритма приведения к минимуму.

Доказательство существования последовательности (\ref{posl}) мы будем строить до конца данной части. Для этого будем исследовать, какие преобразования типа 1-3 заведомо увеличивают (или уменьшают) континуант. Рассмотрим преобразование отражения, пусть $\langle\overrightarrow{ A}, \overrightarrow{ B}, \overrightarrow{C}\rangle$ заменяется на
$\langle\overrightarrow{ A}, \overleftarrow{ B}, \overrightarrow{C}\rangle$. В каких случаях можно однозначно утверждать, увеличивается ли при этом континуант?

В 1956 году Т.Моцкин и Е.Штраус доказали  следующую лемму:
\begin{lem} \cite{MS}.
\label{mslem}
Если для натуральных $a, b,c,d$ выполнено неравенство
$$
(b-f)(c-e)>0,
$$
то:
$$
\langle\overrightarrow{ A}, b, c, \overrightarrow{ B}, e, f, \overrightarrow{C}\rangle \geqslant  \langle\overrightarrow{ A}, b, e, \overleftarrow{B}, c, f, \overrightarrow{C}\rangle.
$$
При этом среди последовательностей неполных частных $A, B, C$ могут быть пустые.
\end{lem}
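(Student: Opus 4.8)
The plan is to reinterpret the inequality as the reflection of a single inner block and then to compute the difference of the two continuants exactly. Write $X=(\overrightarrow A,b)$, $M=(c,\overrightarrow B,e)$, $Y=(f,\overrightarrow C)$; since $\overleftarrow M=(e,\overleftarrow B,c)$, the claim is precisely $\langle X,M,Y\rangle\ge\langle X,\overleftarrow M,Y\rangle$, i.e. that reflecting the block $M$ cannot increase the continuant when $(b-f)(c-e)>0$. The first step is to apply the splitting identity (\ref{knu}) twice on each side — once to detach $Y$, once to detach $X$ — obtaining
$$\langle X,M,Y\rangle=\langle X\rangle\langle Y\rangle\langle M\rangle+\langle X\rangle\langle Y_-\rangle\langle M^-\rangle+\langle X^-\rangle\langle Y\rangle\langle M_-\rangle+\langle X^-\rangle\langle Y_-\rangle\langle M^-_-\rangle$$
and the analogous identity with $M$ replaced by $\overleftarrow M$. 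Using $\langle\overleftarrow M\rangle=\langle M\rangle$, $\langle(\overleftarrow M)^-\rangle=\langle M_-\rangle$, $\langle(\overleftarrow M)_-\rangle=\langle M^-\rangle$, $\langle(\overleftarrow M)^-_-\rangle=\langle M^-_-\rangle$, two of the four summands cancel in the difference and the remaining two combine, leaving the clean factorisation
$$\langle X,M,Y\rangle-\langle X,\overleftarrow M,Y\rangle=\bigl(\langle M^-\rangle-\langle M_-\rangle\bigr)\bigl(\langle X\rangle\langle Y_-\rangle-\langle X^-\rangle\langle Y\rangle\bigr).$$

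Next I would identify the two factors via the one-step continuant recursion. Since $M^-=(c,\overrightarrow B)$ and $M_-=(\overrightarrow B,e)$, the first factor is $\langle c,B\rangle-\langle B,e\rangle=(c-e)\langle B\rangle+\langle B_-\rangle-\langle B^-\rangle$; and $\langle X\rangle\langle Y_-\rangle-\langle X^-\rangle\langle Y\rangle=\langle A,b\rangle\langle C\rangle-\langle A\rangle\langle f,C\rangle$, which by (\ref{A1}) equals $\langle A\rangle\langle C\rangle\bigl((b+[\overleftarrow A])-(f+[\overrightarrow C])\bigr)$. The heart of the proof is then a short sign lemma: for positive integers with $p>q$ and any finite word $D$, $(p-q)\langle D\rangle+\langle D_-\rangle-\langle D^-\rangle>0$ — proved by noting $p-q\ge1$ and then using $\langle D\rangle\ge\langle D^-\rangle$ together with $\langle D_-\rangle\ge1$ for $D\ne\varnothing$, the empty and one-letter words being checked by hand. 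Applying it with $D=B$ shows the first factor is positive when $c>e$, and with $D=\overleftarrow B$ (so that $\langle(\overleftarrow B)_-\rangle=\langle B^-\rangle$ and $\langle(\overleftarrow B)^-\rangle=\langle B_-\rangle$) that it is negative when $c<e$. Since $[\overleftarrow A],[\overrightarrow C]\in[0,1]$, the second factor is $\ge0$ when $b>f$ and $\le0$ when $b<f$. Hence under $(b-f)(c-e)>0$ the two factors carry the same sign, their product is $\ge0$, and the inequality follows; the cases where $A$, $B$ or $C$ is empty are covered by the same formulas with the obvious conventions (for empty $B$ the first factor is simply $c-e$, etc.).

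I expect the only genuine friction to be the sign bookkeeping in the last step: keeping the conventions for $\langle W^-\rangle,\langle W_-\rangle$ consistent through the empty and one-letter words, and in particular isolating the two degenerate configurations (essentially $A=\varnothing$, $C=(1)$, $b-f=1$, together with its mirror) in which the difference is exactly $0$ rather than strictly positive — which is the reason the lemma is stated with ``$\ge$''. Everything else — the two applications of (\ref{knu}), the cancellation, and the expansion of each factor into the ``$(p-q)$-form'' — is mechanical; as a consistency check the factorisation can be tested on a small example, e.g. $\langle 2,3,4,1,2,1,5\rangle-\langle 2,3,2,1,4,1,5\rangle=809-763=46=\bigl(\langle 4,1\rangle-\langle 1,2\rangle\bigr)\bigl(\langle 2,3\rangle\langle 5\rangle-\langle 2\rangle\langle 1,5\rangle\bigr)=2\cdot23$.
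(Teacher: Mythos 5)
The paper does not prove this lemma at all --- it is quoted from Motzkin and Straus \cite{MS} --- so there is no internal argument to measure your proof against; judged on its own, your proof is correct and complete. The exact factorisation you derive, $\langle X,M,Y\rangle-\langle X,\overleftarrow{M},Y\rangle=(\langle M^-\rangle-\langle M_-\rangle)(\langle X\rangle\langle Y_-\rangle-\langle X^-\rangle\langle Y\rangle)$ with $X=(\overrightarrow{A},b)$, $M=(c,\overrightarrow{B},e)$, $Y=(f,\overrightarrow{C})$, is precisely Kan's identity (\ref{comparecont}) specialised to $P=X$, $Q=M$, $R=Y$: dividing by $\langle X\rangle\langle M\rangle\langle Y\rangle$ and using (\ref{A1}) turns your two factors into $[\overrightarrow{M}]-[\overleftarrow{M}]$ and $[\overleftarrow{X}]-[\overrightarrow{Y}]$. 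So in effect you rederive the criterion of Lemma \ref{oms} in the case needed and then perform the sign analysis that converts it into the elementary hypothesis $(b-f)(c-e)>0$; the ``$(p-q)$-form'' sign lemma for the first factor and the bound $[\overleftarrow{A}],[\overrightarrow{C}]\in[0,1]$ for the second are both sound, the empty-word conventions are handled consistently, your identification of the equality configurations explains why the inequality is non-strict, and the numerical check confirms the factorisation. What your route buys is a self-contained proof of the cited result using only (\ref{knu}) and (\ref{A1}); what the paper buys by citing \cite{MS} and then stating Kan's sharper Lemma \ref{oms} is the full if-and-only-if criterion, which is what is actually used in the subsequent reflection arguments.
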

Неформально говоря, надо к большей цифре поставить большую цифру, чтобы увеличить континуант, например 
$$
43=\langle 1, 2, 3, 4 \rangle>\langle 1, 3, 2, 4 \rangle=40.
$$
В 2000 году И.Д.Кан получил следующее обобщение этого правила.
\begin{lem}\cite{Kan}.
\label{oms}
Неравенство $\langle\overrightarrow{ P}, \overrightarrow{ Q}, \overrightarrow{ R}\rangle\geqslant\langle\overrightarrow{ P}, \overleftarrow{ Q}, \overrightarrow{ R}\rangle$ выполнено тогда и только тогда, когда
\begin{equation}
([\overleftarrow{ P]}-[\overrightarrow{ R}])([\overleftarrow{ Q}]-[\overrightarrow{Q}])\geqslant0,
\end{equation}
причем неравенства могут обращаться в равенства только одновременно. Утверждение леммы остается верным, если среди наборов $A, B, C$ есть пустые (соответствующие цепные дроби тогда равны $0$)
\end{lem}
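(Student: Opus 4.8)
The plan is to deduce the lemma from a single closed-form identity for the difference of the two continuants, obtained by applying the gluing rule (\ref{knu}) (equivalently (\ref{A1})) twice. Write the three blocks as $P$, $Q=(q_1,\dots,q_m)$, $R$, and recall $Q^-=(q_1,\dots,q_{m-1})$, $Q_-=(q_2,\dots,q_m)$; set also $Q_-^-=(q_2,\dots,q_{m-1})$ for the block with both ends deleted. Peeling $P$ off on the left by (\ref{knu}) and then splitting each of the two resulting continuants that terminate with $R$ by the same rule, one obtains
\[
\langle\overrightarrow P,\overrightarrow Q,\overrightarrow R\rangle=\langle P\rangle\langle Q\rangle\langle R\rangle+\langle P\rangle\langle Q^-\rangle\langle R_-\rangle+\langle P^-\rangle\langle Q_-\rangle\langle R\rangle+\langle P^-\rangle\langle Q_-^-\rangle\langle R_-\rangle .
\]
Because a continuant is unchanged when its sequence is reversed, passing from $Q$ to $\overleftarrow Q$ fixes $\langle Q\rangle$ and $\langle Q_-^-\rangle$ and merely interchanges $\langle Q^-\rangle$ with $\langle Q_-\rangle$, so the corresponding expansion of $\langle\overrightarrow P,\overleftarrow Q,\overrightarrow R\rangle$ differs from the one above only in its two middle terms.

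Subtracting, the first and last terms cancel and the remaining two combine into a single product:
\[
\langle\overrightarrow P,\overrightarrow Q,\overrightarrow R\rangle-\langle\overrightarrow P,\overleftarrow Q,\overrightarrow R\rangle=\bigl(\langle Q_-\rangle-\langle Q^-\rangle\bigr)\bigl(\langle P^-\rangle\langle R\rangle-\langle P\rangle\langle R_-\rangle\bigr).
\]
Rewriting each factor by (\ref{A1}), namely $\langle Q_-\rangle-\langle Q^-\rangle=\langle Q\rangle\bigl([\overrightarrow Q]-[\overleftarrow Q]\bigr)$ and $\langle P^-\rangle\langle R\rangle-\langle P\rangle\langle R_-\rangle=\langle P\rangle\langle R\rangle\bigl([\overleftarrow P]-[\overrightarrow R]\bigr)$, gives
\[
\langle\overrightarrow P,\overrightarrow Q,\overrightarrow R\rangle-\langle\overrightarrow P,\overleftarrow Q,\overrightarrow R\rangle=\langle P\rangle\langle Q\rangle\langle R\rangle\,\bigl([\overleftarrow P]-[\overrightarrow R]\bigr)\bigl([\overrightarrow Q]-[\overleftarrow Q]\bigr).
\]
Since $\langle P\rangle,\langle Q\rangle,\langle R\rangle$ are positive integers, the sign of the left-hand side equals that of the product of the two bracket-differences, and the left-hand side vanishes precisely when one of these brackets does; this yields the asserted equivalence together with the clause that the two inequalities degenerate to equalities simultaneously. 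Specializing $P=(\dots,b)$, $Q=(c,\dots,e)$, $R=(f,\dots)$ with empty inner blocks recovers the hypothesis $(b-f)(c-e)>0$ and the conclusion of Lemma~\ref{mslem}.

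The only remaining point is the handling of empty blocks, which is a matter of convention rather than substance: for an empty block one takes $\langle\ \rangle=1$ and reads its continued fraction as $0$, so that any term carrying $\langle P^-\rangle$ (respectively $\langle R_-\rangle$) simply disappears, consistently with $[\overleftarrow P]=\langle P^-\rangle/\langle P\rangle=0$ (respectively $[\overrightarrow R]=0$), while for $|Q|\le1$ one has $\overleftarrow Q=Q$ and both sides coincide, matching $[\overrightarrow Q]-[\overleftarrow Q]=0$. The nearest thing to an obstacle is therefore purely the bookkeeping: tracking which of the four boundary sub-continuants of $Q$ gets swapped under reversal, and not dropping a sign when moving between continuant ratios and continued fractions via (\ref{A1}). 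The same identity can alternatively be reached by iterating the Motzkin--Straus swap of Lemma~\ref{mslem} one position at a time, but the direct computation above is shorter and self-contained.
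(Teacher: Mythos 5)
The paper does not actually prove this lemma --- it is quoted from Kan's work and immediately followed, without derivation, by the identity (\ref{comparecont}); so your self-contained computation is a genuine addition rather than a retelling. The computation itself is correct: the double application of (\ref{knu}), the observation that reversing $Q$ only swaps $\langle Q^-\rangle$ with $\langle Q_-\rangle$ in the four-term expansion, and the translation of both factors into continued fractions via (\ref{A1}) reproduce exactly the paper's formula (\ref{comparecont}), and the discussion of empty blocks and of $|Q|\leqslant1$ is the right convention.

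One point you should not gloss over, however. Your identity gives
$\langle\overrightarrow{P},\overrightarrow{Q},\overrightarrow{R}\rangle\geqslant\langle\overrightarrow{P},\overleftarrow{Q},\overrightarrow{R}\rangle$
if and only if $([\overleftarrow{P}]-[\overrightarrow{R}])([\overrightarrow{Q}]-[\overleftarrow{Q}])\geqslant0$, whereas the lemma as printed has the second factor as $[\overleftarrow{Q}]-[\overrightarrow{Q}]$, i.e.\ with the opposite sign. These are different statements, and the printed one is refuted by the paper's own illustration: for $P=(1)$, $Q=(2,3)$, $R=(4)$ one has $\langle1,2,3,4\rangle=43>40=\langle1,3,2,4\rangle$, while $[\overleftarrow{P}]-[\overrightarrow{R}]=\tfrac34>0$ and $[\overleftarrow{Q}]-[\overrightarrow{Q}]=\tfrac27-\tfrac37<0$. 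So the sign in the printed lemma is a typo; the version you proved is the one consistent with (\ref{comparecont}) and with every later use of the lemma in the paper (e.g.\ the discussion before Lemma~\ref{vrab} and the proof of Lemma~\ref{simq}). Saying that your identity ``yields the asserted equivalence'' verbatim is therefore not quite right --- you should state explicitly that you are proving the criterion with $[\overrightarrow{Q}]-[\overleftarrow{Q}]$ and that the displayed inequality in the lemma should be read accordingly. With that caveat your proof is complete and correct.
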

В частности, им была получена следующая формула:
\begin{equation}
\label{comparecont}
\frac{ \langle\overrightarrow{ P}, \overrightarrow{ Q}, \overrightarrow{ R}\rangle- \langle\overrightarrow{ P}, \overleftarrow{ Q}, \overrightarrow{ R}\rangle}{\langle\overrightarrow{ P}\rangle \langle\overrightarrow{ Q}\rangle \langle\overrightarrow{ R}\rangle}=([\overleftarrow{ P}]-[\overrightarrow{ R}])([\overrightarrow{ Q}]-[\overleftarrow{Q}])
\end{equation}
Отсюда выводится тривиальное, но полезное следствие
\begin{foll}\cite{MDK}.
В результате преобразования отражения континуант изменяется не более, чем в $2$ раза.
\end{foll}
\begin{proof}
Поскольку каждая из цепных дробей в числителе правой части равенства (\ref{comparecont}) не превосходит $1,$ имеем
\begin{equation}
|\langle\overrightarrow{P}, \overrightarrow{Q}, \overrightarrow{R}\rangle- \langle\overrightarrow{P}, \overleftarrow{Q}, \overrightarrow{R}\rangle|\leqslant \langle\overrightarrow{P}\rangle \langle\overrightarrow{Q}\rangle \langle\overrightarrow{R}\rangle
\end{equation}
А поскольку выполнены неравенства
\begin{equation}
\langle\overrightarrow{P}, \overrightarrow{Q}, \overrightarrow{R}\rangle>\langle\overrightarrow{P}\rangle \langle\overrightarrow{Q}\rangle \langle\overrightarrow{R}\rangle~ \text{и}~ \langle\overrightarrow{P}, \overleftarrow{Q}, \overrightarrow{R}\rangle>\langle\overrightarrow{P}\rangle \langle\overrightarrow{Q}\rangle \langle\overrightarrow{R}\rangle,
\end{equation}
то очевидно получаем требуемое.
\end{proof}

Оценим теперь, насколько меняется континуант при преобразованиях типа 2), то есть единичных вариациях. Докажем, что вместо максимума по  $M(n, S_n)$ можно искать максимум по меньшему множеству $M_4(n, S_n)\subset M(n,S_n)-$ по множеству континуантов, в котором все неполные частные одинаковой четности отличаются не более, чем на $1$ т.е. имееют вид $\{a, a+1\}$ и $\{b,b+1\}$ соответственно. Введем для краткости для произвольного континуанта $\langle A\rangle$ следующие обозначения:\\ $Odd(A)$ - множество $k\in \mathbb{N}$ таких, что $\exists j\in \mathbb{N}: j - \text{нечетно}, a_j=k.$\\
$Even(A)$ - множество $k\in \mathbb{N}$ таких, что $\exists j\in \mathbb{N}: j - \text{четно}, a_j=k.$\\
Введем также множество $N(A)=\{Odd(A), Even(A)\}$

Например, для $\langle A\rangle=\langle 1, 2, 1, 3, 5, 3 ,1, 4\rangle\\
Odd(A)=\{1,5\}, Even(A)=\{2,3,4\}, N(A)=(\{1,5\},\{2,3,4\}).$

Покажем, что для любого континуанта из $M(n, S_n)$ существует последовательность единичных вариаций, в которой все преобразования кроме, возможно, двух является увеличивающими, приводящая исходный континуант $\langle A\rangle$ в некоторый зависящий от него континуант $\langle A'\rangle,$ принадлежащий множеству $M_4(n, S_n),$ то есть $N(A')\subseteq(\{a,a+1\},\{b,b+1\})$ для некоторых натуральных $a$ и $b$. Это и будет означать, что максимум по множеству $M_4(n,S_n)$ не более, чем в константу раз отличается от максимума по $M(n, S_n).$\\
\begin{theor}[О единичной вариации]
\label{edvar}
$\max(M(n, S_n))\asymp\max(M_4(n, S_n)).$
\end{theor}
Будем доказывать теорему, действуя на исходный континуант $\langle A\rangle$ преобразованиями типа 2) так, чтобы он перешел в описанное множество, то есть рассмотрим последовательность континуантов
\begin{equation}
\label{poslcont}
\langle A\rangle\to\langle A_1\rangle\to\langle A_2\rangle\to\ldots\to\langle A_m\rangle
\end{equation}
такую, что $\langle A_m\rangle\in M_4(n,S_n)$
и для любого $i\leqslant m$ кроме, возможно, двух, $\langle A_i\rangle\geqslant\langle A_{i-1}\rangle,$ при этом $\langle A_i\rangle$ получается из $\langle A_{i-1}\rangle$ действием преобразования типа 2. Доказательство будет состоять из нескольких леммм.\\
\\
\textit{Уточнение параметров.}~
Пусть $\langle A\rangle=\langle a_1,a_2,\ldots,a_n\rangle\in M(n, S_n)$ не лежит в $M_4(n, S_n)$. Тогда в нем есть 2 элемента $a_i$ и $a_j$ с индексами одинаковой четности такие, что $|a_i-a_j|>1$. Запишем $a_i$ как $a+x$ и $a_j$ как $a-x$, сам континуант тогда примет вид 
$$\langle P, a+x, Q, a-x, R\rangle=f(x).$$
При этом если $a_i$ и $a_j$ одинаковой четности, то $a$ - целое и если они разной четности, то $a$ - полуцелое. Соответственно, $f(x)$ есть функция целого или полуцелого аргумента. 
Рассматривая континуанты при разных $x$, мы, очевидно, не выходим из $M(n, S_n)$. Найдем, при каких $x$ значение $f(x)$ максимально.

Следующая лемма представляет собой видоизменение соответствующей леммы из \cite{MDK}.
\begin{lem}
\label{parabola}
Максимум f(x) достигается в одной из следующих точек: ($-\frac{1}{2},  \frac{1}{2}$) (при $a$ полуцелом) или $(-1, 0, 1)$ (при $a$ целом).
\end{lem}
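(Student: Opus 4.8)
The plan is to treat $f$ as an honest quadratic polynomial in the real variable $x$ and to pin down the abscissa of its vertex explicitly, after which the assertion becomes discrete optimisation of a concave function.

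First I would freeze the blocks $P,Q,R$ and pass to the independent variables $u=a+x$ and $v=a-x$. A continuant is affine in each of its entries, with $\partial_{a_k}\langle a_1,\dots,a_n\rangle=\langle a_1,\dots,a_{k-1}\rangle\langle a_{k+1},\dots,a_n\rangle$; expanding $\langle P,u,Q,v,R\rangle$ first in $v$ and then in $u$, and using $\langle X,0\rangle=\langle X^-\rangle$ together with (\ref{knu}) to evaluate the terms carrying a zero entry, one obtains
\[
\langle P,u,Q,v,R\rangle=\langle P\rangle\langle Q\rangle\langle R\rangle\,uv+\langle P\rangle\langle Q,0,R\rangle\,u+\langle R\rangle\langle P,0,Q\rangle\,v+\langle P,0,Q,0,R\rangle ,
\]
where $\langle X,0,Y\rangle=\langle X^-\rangle\langle Y\rangle+\langle X\rangle\langle Y_-\rangle$. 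Substituting $u=a+x$, $v=a-x$ and collecting, $f(x)=-\langle P\rangle\langle Q\rangle\langle R\rangle\,x^2+(\beta-\gamma)\,x+\mathrm{const}$ with $\beta=\langle P\rangle\langle Q,0,R\rangle$ and $\gamma=\langle R\rangle\langle P,0,Q\rangle$. Since $\langle P\rangle\langle Q\rangle\langle R\rangle\ge 1$, $f$ is a strictly concave (downward) parabola.

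Next I would compute the vertex. Dividing $\beta-\gamma$ by $2\langle P\rangle\langle Q\rangle\langle R\rangle$, substituting $\langle X,0,Y\rangle=\langle X^-\rangle\langle Y\rangle+\langle X\rangle\langle Y_-\rangle$, and rewriting the resulting ratios of continuants as continued fractions via (\ref{A1}), the vertex sits at
\[
x^\ast=\tfrac12\bigl([\overleftarrow Q]+[\overrightarrow R]-[\overleftarrow P]-[\overrightarrow Q]\bigr).
\]
Each of the four continued fractions lies in $[0,1]$, and a block contributes the value $1$ only if it equals $(1)$; but $Q=(1)$ forces $[\overrightarrow Q]=1$ as well, so one cannot have $[\overleftarrow Q]+[\overrightarrow R]=2$ together with $[\overleftarrow P]+[\overrightarrow Q]=0$, and symmetrically on the other side. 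Hence $x^\ast\in(-1,1)$ strictly.

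Finally I would read off the statement. If $a\in\mathbb Z$, the admissible $x$ are all integers with $a\pm x\ge 1$; since $|a_i-a_j|\ge 2$ forces $a\ge 2$, this set contains $\{-1,0,1\}$, and because $f$ increases on $(-\infty,x^\ast]$ and decreases on $[x^\ast,\infty)$ with $x^\ast\in(-1,1)$, both the largest admissible integer $\le x^\ast$ and the smallest admissible integer $\ge x^\ast$ lie in $\{-1,0,1\}$; the maximum of $f$ over all admissible integers is the larger of these two values, hence attained at a point of $\{-1,0,1\}$. If $a$ is half-integer, then $|a_i-a_j|$ is odd and $>1$, so $|a_i-a_j|\ge 3$, $a\ge 5/2$, and the admissible half-integers contain $\{-\tfrac32,-\tfrac12,\tfrac12,\tfrac32\}$; for $x^\ast\in(-1,1)$ the nearest half-integer to $x^\ast$ is $\tfrac12$ if $x^\ast>0$, $-\tfrac12$ if $x^\ast<0$, and either one if $x^\ast=0$, and by concavity the maximum of $f$ over all admissible half-integers is attained there, i.e. at a point of $\{-\tfrac12,\tfrac12\}$. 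I expect the only genuine work to be the bilinear expansion of the continuant and the tidy evaluation of $x^\ast$; the delicate point is showing $x^\ast$ never reaches $\pm 1$, since an endpoint value would in the half-integer case also make $\pm\tfrac32$ a maximiser and so break the statement.
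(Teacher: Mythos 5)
Your proof is correct and follows essentially the same route as the paper: both expand $\langle P,a+x,Q,a-x,R\rangle$ as a downward parabola in $x$ (you via bilinearity and the $\langle X,0,Y\rangle$ identity, the paper via a double application of (\ref{knu})), arrive at the same vertex $x_m=\frac12\bigl([\overleftarrow{Q}]-[\overrightarrow{Q}]+[\overrightarrow{R}]-[\overleftarrow{P}]\bigr)$, and conclude by discrete concavity from $|x_m|<1$. Your remark that $Q=(1)$ would force $[\overrightarrow{Q}]=1$ is a worthwhile addition, since it justifies the strict bound $|x_m|<1$ that the paper declares obvious.
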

\begin{solve}
Докажем лемму для случая, когда $P$ и $R$ непусты. Применяя дважды (\ref{knu}), распишем конитнуант:
\begin{equation}
\begin{gathered}
\label{68a}
\langle A\rangle=\langle\underbrace{P},\underbrace{a+x, Q, a-x,} \underbrace{R}\rangle=\\=\langle P\rangle \langle a+x, Q, a-x\rangle \langle R\rangle+\langle P^-\rangle\langle Q, a-x\rangle\langle R\rangle+\\+\langle P \rangle\langle a+x, Q\rangle\langle R_-\rangle+\langle P\rangle\langle  Q^-_-\rangle\langle R\rangle.
\end{gathered}
\end{equation}
Будем использовать в сумме знак $O(1)$, означающий сумму не зависящих от $x$ членов, поскольку на максимум $f(x)$ они, очевидно, не влияют. В частности, в него можно сразу занести последний член правой части равенства (\ref{68a}). Продолжим равенство:
\begin{multline}
\langle A\rangle=\langle P\rangle \langle a+x, Q, a-x\rangle \langle R\rangle+\langle P^-\rangle\langle Q, a-x\rangle\langle R\rangle+\langle P \rangle\langle a+x, Q\rangle\langle R_-\rangle+O(1)=\\=\langle P\rangle \langle R\rangle (\langle a+x, Q \rangle(a-x)+\langle a+x, Q^-\rangle)+\langle P^-\rangle \langle R\rangle((a-x)\langle Q\rangle+\langle Q^-\rangle)+\\+\langle P\rangle \langle R_-\rangle((a+x)\langle Q\rangle +\langle Q_-\rangle)+O(1)=\langle P\rangle \langle R\rangle((a^2-x^2)\langle Q\rangle+(a-x)\langle Q_-\rangle+\\+(a+x)\langle Q^-\rangle)-x\langle P^-\rangle\langle Q\rangle\langle R\rangle +x\langle P\rangle\langle Q\rangle\langle R_-\rangle+O(1)=\\= -x^2\langle P\rangle\langle Q\rangle\langle R\rangle+x(\langle P\rangle\langle Q^-\rangle\langle R\rangle-\langle P\rangle\langle Q_-\rangle\langle R\rangle+\langle P\rangle\langle Q\rangle\langle R_-\rangle-\langle P^-\rangle\langle Q\rangle\langle R\rangle)+O(1).
\end{multline}
Получаем квадратный трехчлен, выразим координату его вершины~$x_m:$ 
\begin{equation}
\begin{gathered}
\label{xm}
x_m=\frac{\langle P\rangle\langle Q^-\rangle\langle R\rangle-\langle P\rangle\langle Q_-\rangle\langle R\rangle+\langle P\rangle\langle Q\rangle\langle R_-\rangle-\langle P^-\rangle\langle Q\rangle\langle R\rangle}{2\langle P\rangle\langle Q\rangle\langle R\rangle}=\\=\frac{[\overleftarrow{ Q}]-[\overrightarrow{ Q}]+[\overrightarrow{ R}]-[\overleftarrow{ P}]}{2}.
\end{gathered}
\end{equation}
Так как все цепные дроби в формуле (\ref{xm}) лежат на отрезке от $0$ до $1$, то, очевидно, ${-1<x_m<1}$, а значит, если $x_m>0$, то $f(1)>f(n+1), f(\frac12)>f(\frac12+n) ~\forall n \in \mathbb{N}$, аналогично для $x_m<0$. Случай, когда $P$ или $R$ пустые,~- аналогичен. Лемма доказана. 
\end{solve}
Таким образом, в случае, когда $a_i-a_j$ нечетно, замена\\
\begin{equation}
\label{pm12}
\langle P, a+x, Q, a-x, R\rangle\to\langle P, a\pm\frac12, Q, a\mp\frac12, R\rangle
\end{equation}
увеличивает континуант. При этом если $x_m\geqslant0$, в формуле (\ref{pm12}) сначала идет знак $+$, a затем $-$, а если $x_m\leqslant0$, то наоборот. Рассмотрим случай, когда $a_i-a_j$ четно. Из доказанной леммы следует, что если $|a_i-a_j|\geqslant4,$ то к этой паре неполных частных можно применить увеличивающую единичную вариацию.  Если же ${|a_i-a_j|=2,}$ то ситуация сложнее. Разбору этого случая и будет посвящено все дальнейшее доказательство теоремы. Прежде всего выведем из леммы \ref{parabola} важное следствие, которое мы будем неоднократно использовать в дальнейшем:
\begin{foll}
\label{mainfoll}
Если $|a_i-a_j|=2$ и $|x_m|\leqslant\frac12,$ то замена
$$\langle P, a\pm1, Q, a\mp1, R\rangle\to\langle P, a, Q, a, R\rangle$$
увеличивает континуант.
\end{foll}
\begin{proof}
Действительно, в этом случае, $f(0)\geqslant f(1)$ и $f(0)\geqslant f(-1),$ а следовательно максимум $f(x)$ по целым точкам достигается в точке $0,$ что и требовалось доказать. 
\end{proof}
Таким образом, применяя единичную вариацию, мы можем сделать так, чтобы все неполные частные континуанта $\langle A_k\rangle$ с индексами одинаковой четности отличались не более, чем на 2, где $A_k$ принадлежит последовательности континуантов (\ref{poslcont}), то есть
$$N(A_k)\subseteq(\{a-1, a, a+1\},\{b-1, b, b+1\}).$$
Если существуют неполные частные $a_i$ и $a_j$ с индексами одинаковой четности такие, что $a_i-a_j=2$, то рассмотрим замену
\begin{equation}
\label{var1}
\langle P', a_i, Q', a_j, R'\rangle\to\langle P', a_i-1, Q', a_j+1, R'\rangle.
\end{equation}
Не ограничивая общности, будем считать, что $i$ и $j$ четные. Тогда выполнено следующее:
\begin{lem}
\label{edvar1}
Если $Even(A_k)=\{a-1,a,a+1\},~a\ne2$ и $Odd(A_k)\nsubseteq\{1,2\}$, то существует единичная вариация, увеличивающая $\langle A_k\rangle$.
\end{lem}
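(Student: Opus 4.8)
The plan is to realize the desired variation as the ``merge'' (\ref{var1}) applied to a well‑chosen pair: since $a-1,a+1\in\mathrm{Even}(A_k)$, and since $a-1\ge 1$ together with $a\ne 2$ forces $a\ge 3$, there are even indices $i$ with $a_i=a+1$ and $j$ with $a_j=a-1$, and the replacement turns this pair into $a,a$. By the quadratic identity of Lemma \ref{parabola} (and Corollary \ref{mainfoll}), writing $\langle A_k\rangle=\langle P,a+x,Q,a-x,R\rangle=f(x)$ with the slot $a+x$ occupied by the chosen $a+1$, the merge does not decrease $\langle A_k\rangle$ precisely when the abscissa
$$x_m=\tfrac{1}{2}\left([\overleftarrow{Q}]-[\overrightarrow{Q}]+[\overrightarrow{R}]-[\overleftarrow{P}]\right)$$
of the vertex of the downward parabola $f$ satisfies $x_m\le\tfrac12$, and it strictly increases it when $x_m<\tfrac12$; if instead the chosen $a-1$ lies to the left of the chosen $a+1$, the same merge is advantageous exactly when $x_m\ge-\tfrac12$. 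All four continued fractions above lie in $[0,1]$, so it suffices to exhibit even indices $i$ (with $a_i=a+1$) and $j$ (with $a_j=a-1$) whose associated $x_m$ lies in $[-\tfrac12,\tfrac12]$.

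First dispose of the case where $a+1$ and $a-1$ occur at consecutive even indices. Then $Q$ is a single odd‑indexed partial quotient, so $[\overleftarrow{Q}]=[\overrightarrow{Q}]$ and $2x_m=[\overrightarrow{R}]-[\overleftarrow{P}]$; an even index is $\ge 2$, so $P$ is non‑empty and $[\overleftarrow{P}]>0$, while $[\overrightarrow{R}]\le 1$, so $2x_m<1$ except in the boundary configuration $R$ empty, $P=(1)$, which is checked by hand. Hence assume from now on that between every even index carrying $a+1$ and every even index carrying $a-1$ there is an even index carrying $a$, and pick a pair $i,j$ with $\{a_i,a_j\}=\{a+1,a-1\}$ and $|i-j|$ minimal; then every even index strictly between $i$ and $j$ carries $a$, and the block $Q$ alternates odd‑indexed partial quotients with copies of $a$ and has odd length $\ge 3$, so $[\overleftarrow{Q}]$ and $[\overrightarrow{Q}]$ each begin with an odd‑indexed quotient followed by the value $a$.

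Now split on whether $1\in\mathrm{Odd}(A_k)$. If $1\notin\mathrm{Odd}(A_k)$, put $c=\min\mathrm{Odd}(A_k)\ge 2$; since $a\ge 3$ the tail of each of $[\overleftarrow Q],[\overrightarrow Q],[\overrightarrow R],[\overleftarrow P]$ after its first entry is $<1$, and each of these first entries lies in $[c,c+2]$, so a crude estimate gives $[\overleftarrow Q]-[\overrightarrow Q]<\tfrac{3}{c(c+3)}\le\tfrac{3}{10}$ and $[\overrightarrow R]-[\overleftarrow P]<\tfrac{3}{c(c+3)}\le\tfrac{3}{10}$, whence $x_m<\tfrac12$ and the merge of the minimal pair works. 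If $1\in\mathrm{Odd}(A_k)$, then, since same‑parity partial quotients differ by at most $2$, $\mathrm{Odd}(A_k)\subseteq\{1,2,3\}$, and the hypothesis $\mathrm{Odd}(A_k)\nsubseteq\{1,2\}$ forces $3\in\mathrm{Odd}(A_k)$; one uses both: choosing the merged pair (possibly after a reflection of a sub‑block with endpoints of equal parity, which preserves all index parities and whose effect is controlled by Lemma \ref{oms}) so that a $1$ sits immediately after the chosen $a+1$ and a $3$ immediately before the chosen $a-1$ gives $[\overrightarrow{Q}]>\tfrac{a}{a+1}\ge\tfrac34$ and $[\overleftarrow{Q}]<\tfrac13$, so $[\overleftarrow{Q}]-[\overrightarrow{Q}]<0$ and $2x_m<[\overrightarrow{R}]-[\overleftarrow{P}]\le 1$.

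The main obstacle is precisely this last configuration: when $1$ and $3$ both occur among the odd partial quotients one must verify that, however the values $a-1,a,a+1$ and the small and large odd partial quotients are interleaved, some choice of the merged pair — after a legal parity‑preserving reflection when necessary — drives $x_m$ into $[-\tfrac12,\tfrac12]$, the awkward sub‑cases being a block $Q$ of length exactly $3$ and degenerate prefixes or suffixes ($P$ or $R$ empty or equal to $(1)$), where the continued fractions attain their extreme values $0$ and $1$. It is exactly here that both $a\ne 2$ and $\mathrm{Odd}(A_k)\nsubseteq\{1,2\}$ are indispensable, since dropping either allows arrangements on which every unit variation has $x_m>\tfrac12$.
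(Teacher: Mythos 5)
Your treatment of the case $1\notin Odd(A_k)$ is correct and essentially the paper's: every continued fraction in (\ref{xm}) then begins with an odd-indexed partial quotient $\geqslant 2$, so each is at most $\tfrac12$, hence $|x_m|<\tfrac12$ and Corollary \ref{mainfoll} applies to the even-indexed pair $(a+1,a-1)$. The gap is in the case $1\in Odd(A_k)$ (so $Odd(A_k)\subseteq\{1,2,3\}$ and, by hypothesis, $3\in Odd(A_k)$). There you insist on merging an \emph{even}-indexed pair $(a+1,a-1)$ and require a $1$ immediately to the right of the chosen $a+1$ and a $3$ immediately to the left of the chosen $a-1$. Such a pair need not exist: if every even-indexed $a+1$ is flanked by $3$'s and every even-indexed $a-1$ is flanked by $1$'s (take $a\geqslant 7$, say), then for \emph{every} even-indexed pair one gets $[\overleftarrow{Q}]-[\overrightarrow{Q}]>\tfrac12$ and $[\overrightarrow{R}]-[\overleftarrow{P}]>\tfrac12$, so $x_m>\tfrac12$ and every such merge strictly decreases the continuant. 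Your escape hatch --- performing a reflection first --- does not repair this: the lemma asserts the existence of a single unit variation increasing $\langle A_k\rangle$ itself; a reflection is a different transformation, it changes the continuant, and it may decrease it (Lemma \ref{oms} only tells you its sign, not that you may insert it for free). You then explicitly leave the resulting sub-cases unverified, i.e.\ the proof is incomplete exactly where the hypotheses $a\ne2$ and $Odd(A_k)\nsubseteq\{1,2\}$ are supposed to do their work.

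The missing idea is that a unit variation is allowed to act on an \emph{odd}-indexed pair. Since $3\in Odd(A_k)$ and $1\in Odd(A_k)$, apply the variation (\ref{13}) to an odd-indexed $3$ and an odd-indexed $1$, turning them into $2,2$. For this pair all four continued fractions in (\ref{xm}) begin with even-indexed partial quotients; since $Even(A_k)=\{a-1,a,a+1\}$ and $a\ne2$ force $a\geqslant3$, we have $1\notin Even(A_k)$, so each of these fractions is at most $\tfrac12$, whence $|x_m|\leqslant\tfrac12$ and Corollary \ref{mainfoll} produces the required increasing unit variation with no case analysis on the arrangement of the partial quotients. This is the paper's argument, and it is the step your proposal lacks.
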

\begin{solve}
Выберем  в $\langle A_k\rangle$ произвольные неполные частные\\ $a_i=a+1$ и $a_j=a-1,$ $i$ и $j$ четные. Рассмотрим замену, определенную формулой (\ref{var1}). Заметим, что если $1\notin Odd(A_k)$, то все цепные дроби из формулы (\ref{xm}) меньше $\frac12$, следовательно, $|x_m|<\frac12$, а значит по следствию \ref{mainfoll} замена (\ref{var1}) увеличивает континуант. 

Пусть теперь $\{1\}\in Odd(A_k),$ тогда $Odd(A_k)\subseteq\{1,2,3\}.$ Докажем, что если $\{3\}\in Odd(A_k)$, то увеличивающая единичная вариация существует. Действительно, поскольку по условию ${1\notin Even(A_k),}$ то применяя единичную вариацию к произвольным неполным частным, равным $1$ и $3$, мы можем сказать, что все цепные дроби в формуле (\ref{xm}) меньше $\frac12$, а значит по следствию \ref{mainfoll} замена
\begin{equation}
\label{13}
\langle P_1, 3, Q_1, 1, R_1\rangle\to\langle P_1, 2, Q_1, 2, R_1\rangle
\end{equation}
увеличивает континуант. Что и требовалось доказать.
\end{solve}
Докажем теперь, что в случае, когда  $Odd(A_k)\subseteq\{1,2\}$ также существует увеличивающая единичная вариация:
\begin{lem}
\label{edvar2}
Пусть $N(A_k)=(\{a-1, a, a+1\},\{1,2\}), a>2$, тогда единичная вариация, определенная формулой (\ref{var1}) увеличивает континуант.
\end{lem}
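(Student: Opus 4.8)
The plan is to control $x_m$ from~(\ref{xm}) and then read off the sign of $f(0)-f(1)$ from the quadratic behaviour established in Lemma~\ref{parabola}. Arrange $\langle A_k\rangle=\langle P',a+1,Q',a-1,R'\rangle$, where $a_i=a+1$ and $a_j=a-1$ are occurrences at even indices $i<j$ (they exist since $Even(A_k)=\{a-1,a,a+1\}$), and $P'=(a_1,\dots,a_{i-1})$, $Q'=(a_{i+1},\dots,a_{j-1})$, $R'=(a_{j+1},\dots,a_n)$. Lemma~\ref{parabola} presents $f(x)=\langle P',a+x,Q',a-x,R'\rangle$ as a downward parabola with vertex $x_m$ given by~(\ref{xm}), so $f(0)-f(1)=\langle P'\rangle\langle Q'\rangle\langle R'\rangle(1-2x_m)$, and the replacement $(a+1,a-1)\to(a,a)$ strictly increases $\langle A_k\rangle$ exactly when
$$
x_m=\frac{[\overleftarrow{Q'}]-[\overrightarrow{Q'}]+[\overrightarrow{R'}]-[\overleftarrow{P'}]}{2}<\frac12,
$$
equivalently $1+[\overleftarrow{P'}]+[\overrightarrow{Q'}]>[\overleftarrow{Q'}]+[\overrightarrow{R'}]$. (Corollary~\ref{mainfoll} covers only the subcase $|x_m|\le\frac12$; here one may well have $x_m<-\frac12$, which is only more favourable, so it is Lemma~\ref{parabola} rather than the corollary that one cites.) It remains to prove this inequality.

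The hypothesis $a>2$ forces every even-indexed partial quotient of $A_k$ to be $\ge a-1\ge2$ and every odd-indexed one to be $1$ or $2$. Hence each of $[\overleftarrow{P'}],[\overrightarrow{Q'}],[\overleftarrow{Q'}],[\overrightarrow{R'}]$ begins with an odd-indexed partial quotient and, if it has a further term, that term is even-indexed; so each of these continued fractions lies in $[\frac{a-1}{2a-1},\frac12]$ when it begins with a $2$ and in $[\frac{a-1}{a},1]$ when it begins with a $1$, the value $1$ being attained only by the one-term block $(1)$. In particular $[\overleftarrow{P'}]>\frac{a-1}{2a-1}$ (here one uses that $P'$, of length $i-1$ with $i$ even, has odd length, hence is not the two-term block $(2,a-1)$), and likewise $[\overrightarrow{Q'}]>\frac{a-1}{2a-1}$ whenever $Q'\neq(1)$. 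The decisive structural point — and the step I expect to be least routine — is that since $n$ is even (as assumed throughout this section) and $i,j$ are even, the block $R'$ has \emph{even} length; therefore $R'$ is never the one-term block $(1)$, which gives $[\overrightarrow{R'}]<\frac{a+2}{a+3}<1$ (with $[\overrightarrow{R'}]=0$ if $R'$ is empty), and similarly $[\overleftarrow{Q'}]\le\frac{a+2}{a+3}$ unless $Q'=(1)$. Without this parity remark the plain interval bounds would only give $x_m<\frac12+\varepsilon$ for $a=3,4$, so recognising that it is needed is really the only non-mechanical part.

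The case analysis is then immediate. If $[\overleftarrow{Q'}]+[\overrightarrow{R'}]\le1$ the target inequality holds since its right side is $\le1$ while its left side exceeds $1$. If $Q'=(1)$ then $[\overleftarrow{Q'}]=[\overrightarrow{Q'}]=1$ and the inequality reads $1+[\overleftarrow{P'}]>[\overrightarrow{R'}]$, which is clear. Otherwise $Q'\neq(1)$ yields $[\overleftarrow{Q'}]\le\frac{a+2}{a+3}$, hence $[\overleftarrow{Q'}]+[\overrightarrow{R'}]-1<\frac{2(a+2)}{a+3}-1=\frac{a+1}{a+3}$, whereas $[\overleftarrow{P'}]+[\overrightarrow{Q'}]>\frac{2(a-1)}{2a-1}$; and $\frac{2(a-1)}{2a-1}>\frac{a+1}{a+3}$ for every $a\ge2$, since cross-multiplying reduces it to $3a>5$. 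Therefore $x_m<\frac12$, and the lemma follows.
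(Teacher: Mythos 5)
Your proof is correct and follows essentially the same route as the paper's: both bound the partial continued fractions entering the vertex formula (\ref{xm}) by the extremal values $[2,a-1]=\frac{a-1}{2a-1}$ and $[1,a+1,1]=\frac{a+2}{a+3}$ and conclude $x_m<\frac12$, hence $f(0)>f(1)$. Your write-up is in fact a bit more careful than the paper's (which simply asserts $|x_m|\leqslant\frac12-\frac{3a-5}{(a+3)(4a-2)}$ and invokes Corollary \ref{mainfoll}), since you separately treat the degenerate one-term blocks equal to $(1)$ and note that only the one-sided bound $x_m<\frac12$ is actually required.
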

\begin{solve}
Воспользуемся Леммой \ref{parabola}. Максимальное значение цепных дробей из формулы \ref{xm} меньше либо равно $$[1,a+1,1]=\frac{a+2}{a+3}=1-\frac{1}{a+3},$$
а минимальное больше либо равно
$$[2,a-1]=\frac{a-1}{2a-1}=\frac12-\frac{1}{4a-2}.$$
Поэтому, подставляя данные оценки в формулу (\ref{xm}), получим:
\begin{multline}
|x_m|\leqslant  \frac{2(1-\frac{1}{a+3}-(\frac12-\frac{1}{4a-2}))}{2}=1-\frac{1}{a+3}-(\frac12-\frac{1}{4a-2})=\\
=\frac12-\frac{1}{a+3}+\frac{1}{4a-2}=\frac12-\frac{3a-5}{(a+3)(4a-2)}.
\end{multline}
Поскольку $\frac{3a-5}{(a+3)(4a-2)}>0$ при $a\ge2,$ имеем $|x_m|<\frac12$. Пользуясь следствием \ref{mainfoll}, получаем утверждение леммы. 
\end{solve}
Таким образом, осталось расмотреть случай, когда $N(A_k)\subseteq(\{1,2,3\},\{1,2,3\})$.
\begin{lem}
\label{edvar3}
Пусть $N(A_k)\subseteq(\{1,2,3\},\{1,2,3\})$, тогда для самой близкой в смысле разности индексов пары $(a_i, a_j)$ такой, что $a_i=3$ и $a_j=1,$ где $i$ и $j$ имеют одинакоую четность, замена (\ref{13}) увеличивает континуант.
\end{lem}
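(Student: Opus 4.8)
The plan is to derive the statement from Lemma~\ref{parabola}. Write $\langle A_k\rangle=f(1)$, where $f(x)=\langle P_1,2+x,Q_1,2-x,R_1\rangle$ and the slot "$2+x$" is occupied by whichever of the two changed quotients stands first in $\langle A_k\rangle$; if the $3$ stands first, then $f(1)$ is the original continuant and the substitution~(\ref{13}) is the passage to $f(0)$ (the case when the $1$ stands first is the mirror image, leading to the requirement $x_m\ge-\tfrac12$ instead of $x_m\le\tfrac12$ below, and I would treat the two together). By Lemma~\ref{parabola}, $f$ is a downward quadratic with vertex $x_m$ given by formula~(\ref{xm}), hence $f(0)\ge f(1)$ is equivalent to $x_m\le\tfrac12$, i.e. to
$$
[\overleftarrow{Q_1}]-[\overrightarrow{Q_1}]+[\overrightarrow{R_1}]-[\overleftarrow{P_1}]\le 1 .
$$
The whole proof is a verification of this inequality.

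The first step is to read off the shape of $Q_1$ from the minimality of the chosen pair. A quotient of $Q_1$ sitting at a position of the same parity as $i$ cannot equal $3$: it would form with $a_j=1$ a same-parity pair closer than $(a_i,a_j)$; nor can it equal $1$ (it would pair with $a_i=3$). So all such quotients equal $2$, and
$$
Q_1=(q_1,2,q_2,2,\dots,2,q_m),\qquad q_l\in\{1,2,3\},\quad m=\tfrac{|i-j|}{2}\ge 1,
$$
the $q_l$ occupying the positions of parity opposite to $i$. Applying the same idea to pairs at index-distance $2$ gives, when $m\ge2$: the quotient of $P_1$ adjacent to $a_i$ along its parity class is not $1$ (hence in $\{2,3\}$); the quotient of $R_1$ adjacent to $a_j$ is not $3$ (hence in $\{1,2\}$); and no two consecutive $q_l,q_{l+1}$ form the pair $\{1,3\}$. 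Finally, since $n$ and the positions $i,j$ are even, the "outer" arm lying beyond the changed block has length $\ne 1$ (that length has the parity of $n$), and the inner arm $P_1$ has length $\ge 1$ when the $3$ comes first.

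If $m=1$ then $[\overleftarrow{Q_1}]=[\overrightarrow{Q_1}]$ and the inequality reduces to $[\overrightarrow{R_1}]-[\overleftarrow{P_1}]\le1$, which holds because $[\overrightarrow{R_1}]<1$ (equality would force $R_1=(1)$, ruled out by the parity just noted) and $[\overleftarrow{P_1}]\ge0$. For $m\ge2$ one bounds each of the four continued fractions separately. From $q_l\ge1$, and since in $\overleftarrow{Q_1}$ the $q_l$ occupy the odd places while the $2$'s are fixed, $[\overleftarrow{Q_1}]\le[0;1,2,1]=\tfrac34$ and $[\overrightarrow{Q_1}]\ge[0;3,2]=\tfrac27$; from "no $3$ right after the $1$", $[\overrightarrow{R_1}]\le\tfrac34$ when $R_1$ is non-empty (and $[\overrightarrow{R_1}]=0$ otherwise); from "no $1$ right before the $3$", $[\overleftarrow{P_1}]\ge\tfrac27$. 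Substituting,
$$
[\overleftarrow{Q_1}]-[\overrightarrow{Q_1}]+[\overrightarrow{R_1}]-[\overleftarrow{P_1}]\ \le\ \tfrac34-\tfrac27+\tfrac34-\tfrac27\ =\ \tfrac{13}{14}\ <\ 1 ,
$$
and a fortiori $<1$ if $R_1$ is empty. In the orientation where the $3$ precedes the $1$ this closes the argument, since there $P_1$ has length $\ge1$ and $R_1$ has length $\ne1$, so the four bounds apply verbatim.

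The hard part is the opposite orientation, in which the $1$ precedes the $3$ and one of the arms degenerates --- the arm before the $1$ may have length $1$ with first entry $1$, forcing $[\overleftarrow{P_1}]=1$, or the arm after the $3$ may be empty, forcing $[\overrightarrow{R_1}]=0$ --- so that the symmetric estimates above lose their slack. Here one must exploit the full strength of the minimality of $(a_i,a_j)$: a same-parity $3$ and $1$ at index-distance $2$ \emph{anywhere} in $A_k$ would produce a closer pair, and when an arm degenerates this rigidly constrains $Q_1$ --- in particular the $q_l$ can no longer contain both a $1$ and a $3$, so $Q_1$ is forced to be (nearly) a palindrome, $[\overleftarrow{Q_1}]$ and $[\overrightarrow{Q_1}]$ become essentially equal, and the troublesome difference is negligible, which is exactly what compensates for the lost slack. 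Organising the remaining sub-cases by the lengths of the two arms and by the sign of $[\overleftarrow{Q_1}]-[\overrightarrow{Q_1}]$, and feeding in these sharpened consequences of minimality, is the technical core of the proof; the borderline equalities $x_m=\pm\tfrac12$ are harmless, a step with $\langle A_i\rangle=\langle A_{i+1}\rangle$ being admissible in the sense of Lemma~\ref{minalgor}.
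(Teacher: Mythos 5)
Your general strategy coincides with the paper's: both proofs reduce the claim to an estimate of the vertex $x_m$ from formula (\ref{xm}) and then invoke Corollary \ref{mainfoll} (i.e.\ the parabola argument of Lemma \ref{parabola}), and both extract the key structural information from the minimality of the pair, namely that all partial quotients of $Q_1$ of the same parity as $i$ equal $2$. Your numerical bounds in the generic case ($\frac34-\frac27$ for each of the two differences, total $\frac{13}{14}<1$) are correct and even exploit a consequence of minimality (the quotients of $P_1,R_1$ adjacent to the pair) that the paper does not need; the paper instead sharpens the bound on $|[\overleftarrow{Q_1}]-[\overrightarrow{Q_1}]|$ to $\frac7{20}$ by noting that the odd-parity entries of $Q_1$ cannot contain both a $1$ and a $3$, and obtains $|x_m|\leqslant\frac9{20}$ in one stroke.

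The problem is the part you defer. You correctly identify that the estimates break down when an arm degenerates ($[\overleftarrow{P_1}]=1$ or $[\overrightarrow{R_1}]=0$), but the repair you sketch --- that minimality then forces $Q_1$ to be ``nearly a palindrome'', making $[\overleftarrow{Q_1}]-[\overrightarrow{Q_1}]$ negligible --- is false, and no refinement of the minimality argument can close this gap, because the statement itself fails in the degenerate configuration. Take $\langle A_k\rangle=\langle1,1,1,2,2,3\rangle=65$: here $N(A_k)=(\{1,2\},\{1,2,3\})$, the unique same-parity pair with values $3$ and $1$ is $(a_6,a_2)$, and $Q_1=(1,2,2)$ satisfies every constraint you list (its single even-position entry is $2$, its odd-position entries $1,2$ do not contain both a $1$ and a $3$); nevertheless $[\overrightarrow{Q_1}]-[\overleftarrow{Q_1}]=\frac57-\frac37=\frac27$ is not small, and the substitution (\ref{13}) yields $\langle1,2,1,2,2,2\rangle=63<65$, a strict decrease. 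The paper does not attempt to prove the lemma in these boundary situations: the remark immediately following Lemma \ref{edvar3} concedes that a unit variation touching an end of the continuant may decrease it, and absorbs the at most two such steps (each losing a factor less than $2$) into the constant of Theorem \ref{edvar} via Lemma \ref{minalgor}. Your proof needs the same escape hatch; as written, the ``technical core'' you announce cannot be carried out.
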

\begin{solve}
Без ограничения общности можем считать, что $i$ и $j$ четные. Рассмотрим замену (\ref{13}) и разность $[\overleftarrow{Q}]-[\overrightarrow{Q}]$ из формулы (\ref{xm}). Заметим, что все неполные частные $Q,$ имеющие в $\langle A_k\rangle$  четный индекс, равны $2$, т.к. иначе существовала бы более близкая пара с $1$ или $3$, а все неполные частные $Q,$ имеющие в $\langle A_k\rangle$ нечетный индекс, отличаются не более, чем на $1$ (т.е. равны $1$ и $2$ или $2$ и $3$). Таким образом
$${|[\overleftarrow{Q}]-[\overrightarrow{Q}]|\leqslant  [1,2,1,2 \ldots]-[2,2,2,2 \ldots]\leqslant [1,2,1]-[2,2]=\frac34-\frac25=\frac7{20}}.$$
Рассмотрим теперь внешнюю разность
$$|[\overrightarrow{R}]-[\overleftarrow{P}]|\leqslant  [1,3,1,3\ldots]-[3,1,3,1\ldots]\leqslant  [1,3,1]-[3,1]={\frac45-\frac14=\frac{11}{20}}.$$
Следовательно
$$|x_m|=\frac{|[\overleftarrow{Q}]-[\overrightarrow{Q}]|+|[\overrightarrow{R}]-[\overleftarrow{ P}]|}{2}\leqslant \frac{\frac{7}{20}+\frac{11}{20}}{2}=\frac{9}{20}<\frac12.$$
Отсюда по следствию \ref{mainfoll} и следует утверждение леммы.
\end{solve} 
Отметим, что в случае, когда мы применяем единичную вариацию к паре неполных частных, одно из которых является правым концом континуанта, соответствующая $x_m$  из формулы (\ref{xm}) равна $\frac{[\overleftarrow{ B}]-[\overrightarrow{ B}]-[\overleftarrow{ A}]}{2}$, что больше $-1$ и меньше $\frac{1}{2}$, аналогично для левого конца. В этих случаях единичная вариация может уменьшать континуант, но таких преобразований будет не более двух, и каждое уменьшит континуант не более, чем в $2$ раза.

Таким образом, из лемм \ref{parabola}-\ref{edvar3} следует, что если
$$N(A)\nsubseteq(\{a,a+1\},\{b,b+1\})$$
ни для каких натуральных $a$ и $b,$ то существует единичная вариация, увеличивающая $\langle A\rangle.$ Теорема \ref{edvar} доказана полностью.

Введем новое обозначение. Пусть дан произвольный континуант\\ $\langle C\rangle=\langle c_1,\ldots ,c_n\rangle,$ тогда обозначим через\\ $((c_{i_1}\to c'_{i_1}), (c_{i_1}\to c'_{i_1}),\ldots, (c_{i_k}\to c'_{i_k}))$\\ замену \\
$\langle c_1,c_2,\ldots, c_{i_1-1}, c_{i_1}, c_{i_1+1}\ldots,c_{i_2-1}, c_{i_2}, c_{i_2+1}\ldots,c_{i_k-1}, c_{i_k}, c_{i_k+1}\ldots,c_{n-1},c_n\rangle\to$\\
$\langle c_1,c_2,\ldots, c_{i_1-1}, c'_{i_1}, c_{i_1+1}\ldots,c_{i_2-1}, c'_{i_2}, c_{i_2+1}\ldots,c_{i_k-1}, c'_{i_k}, c_{i_k+1}\ldots,c_{n-1},c_n\rangle$,\\ то есть заменяем только элементы $c_{i_j},$ остальные неполные частные остаются теми же.\\
Докажем теперь теорему о минимуме.\\
\begin{theor}
\label{mintheor}
$\min(n,S_n)\asymp\langle\underbrace{1,\ldots,1,}_{n-1} s\rangle, $ где $s=S_n-\frac{3n-4}{2}.$
\end{theor}
\begin{proof}
Пусть $\langle A\rangle=\langle a_1,a_2,\ldots,a_n\rangle,$ выберем $a_{i}=\max(Even(A))$ - максимальное неполное частное четного индекса. Если существует четное $h$ такое, что $a_i=a_h,$ произведем замену
$$((a_i\to a_i+a_h-1), (a_h\to1)),$$
она увеличит континуант не более, чем в 2 раза. Тогда в новом континуанте $\langle A\rangle$ элемент $a'_i$ станет единственным максимальным неполным частным.

Из леммы \ref{parabola} следует, что поскольку график функции
$$f(x)=\langle A,a+x, B, a-x,C\rangle-$$ парабола с вершиной $x_m,~ |x_m|<1,$ то
$$
f(x+1)<f(x)~\forall x\geqslant\frac12~\text{и}~f(x-1)<f(x)~\forall x\leqslant-\frac12.
$$
Следовательно, любая замена
$$((a_i\to a_i+a_j-1), (a_j\to1)),~\text{где}~j-\text{четно,}$$
уменьшит континуант, поскольку при этом разность между неполными частными, для которых мы применяем единичную вариацию, увеличится.
Будем производить такие замены, пока все неполные частные четного индекса, кроме $a_i$, не станут равны $1$.

Произведем аналогичную процедуру для неполных частных нечетного индекса. Получим континуант содержащий не более 2 неполных частных, отличных от $1$. Он будет иметь вид
$$\langle1,\ldots, 1,\widetilde{a_i},1,\ldots,1,\widetilde{a_j},1,\ldots,1\rangle.$$
Если $a_j$ нечетно, произведем следующую замену
$$((\widetilde{a_i}\to \widetilde{a_i}+2\widetilde{a_j}),(\widetilde{a_j}\to 1))$$
Если же $a_j$ четно, то произведем другую замену
$$((\widetilde{a_i}\to \widetilde{a_i}+2\widetilde{a_j}-2),(\widetilde{a_j}\to 2))$$
Очевидно, что любая такая замена увеличит континуант не более, чем в $2$ раза.
Таким образом, полученный континуант имеет вид 
$$\langle\underbrace{1,\ldots,1}_{i-1},s,\underbrace{1,\ldots,1}_{n-i}\rangle~\text{или}~\langle\underbrace{1,\ldots,1}_{i-1},s,\underbrace{1,\ldots,1}_{j-i-1},2,\underbrace{1,\ldots,1}_{n-j}\rangle,$$
что не более, чем в константу раз отличается от $\langle\underbrace{1,\ldots,1,}_{n-1} s\rangle$, что и требовалось доказать.
\end{proof}
Введем новые обозначения:
\begin{equation}
\begin{split}
c^{(1)}_{a,a+1;b}=[b,a+1,b],~~c^{(2)}_{a,a+1;b}=[b+1,a],\\
c^{(1)}_{a;b,b+1}=[a,b+1,a],~~c^{(2)}_{a;b,b+1}=[a+1,b].
\end{split}
\end{equation}
\begin{lem}
Пусть $\langle A\rangle=\langle P,a,R\rangle=\langle P_1,b,R_1\rangle$-континуант, для которго $N(A)\subseteq(\{a,a+1\}, \{b,b+1\})$ и при этом $P, Q, P_1, Q_1$ состоят по крайней мере из $2$ неполных частных. Тогда выполнены следующие оценки:
\begin{multline}
\label{aa+1}\textbf{(i)}\quad
1+\frac1{a+2c^{(1)}_{a,a+1;b}}=\frac{a+1+2c^{(1)}_{a,a+1;b}}{a+2c^{(1)}_{a,a+1;b}}\leqslant \frac{\langle P,a+1,R\rangle}{\langle P,a,R\rangle}\leqslant\\ \leqslant\frac{a+1+2c^({2})_{a,a+1;b}}{a+2c^{(2)}_{a,a+1;b}}=1+\frac1{a+2c^{(2)}_{a,a+1;b}}
\end{multline}
\begin{multline}
\label{bb+1}
\textbf{(ii)}\quad1+\frac1{b+2c^{(1)}_{a;b,b+1}}=\frac{b+1+2c^{(1)}_{a;b,b+1}}{b+2c^{(1)}_{a;b,b+1}}\leqslant \frac{\langle P_1,b+1,R_1\rangle}{\langle P_1,b,R_1\rangle}\leqslant \\ \leqslant \frac{b+1+2c^{(2)}_{a;b,b+1}}{b+2c^{(2)}_{a;b,b+1}}=1+\frac1{b+2c^{(2)}_{a;b,b+1}}
\end{multline}
\end{lem}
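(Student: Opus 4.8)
The plan is to collapse both two-sided estimates into a single closed form for the ratio of continuants and then reduce everything to bounding two continued fractions. I shall prove \textbf{(i)}; \textbf{(ii)} follows by interchanging the roles of the odd- and even-indexed partial quotients (equivalently of $a$ and $b$), so the argument there is word for word the same. Write $\langle A\rangle=\langle P,a,R\rangle$ with $a$ occupying an odd position of $A$. Because $N(A)\subseteq(\{a,a+1\},\{b,b+1\})$, the partial quotients of $\overleftarrow{P}$ and of $\overrightarrow{R}$ lie, in alternation, in $\{b,b+1\}$ (first, third, \ldots) and in $\{a,a+1\}$ (second, fourth, \ldots), and by hypothesis $P$ and $R$ each have at least two partial quotients.

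First I would prove the continuant identity. From the recurrence for $\langle\,\cdot\,\rangle$ together with (\ref{knu}) one obtains, for a single partial quotient $c$,
\[
\langle P,c,R\rangle=c\,\langle P\rangle\langle R\rangle+\langle P^-\rangle\langle R\rangle+\langle P\rangle\langle R_-\rangle ,
\]
so in particular $\langle P,a+1,R\rangle-\langle P,a,R\rangle=\langle P\rangle\langle R\rangle$. Dividing the numerator and denominator of $\langle P\rangle\langle R\rangle/\langle P,a,R\rangle$ by $\langle P\rangle\langle R\rangle$ and invoking (\ref{A1}) yields the key formula
\[
\frac{\langle P,a+1,R\rangle}{\langle P,a,R\rangle}=1+\frac{\langle P\rangle\langle R\rangle}{\langle P,a,R\rangle}=1+\frac{1}{a+[\overleftarrow{P}]+[\overrightarrow{R}]} .
\]
Hence it is enough to show $c^{(2)}_{a,a+1;b}\le[\overrightarrow{R}]\le c^{(1)}_{a,a+1;b}$, and likewise for $[\overleftarrow{P}]$; adding the two inequalities and plugging the result into the key formula gives \textbf{(i)}, the trivial rewriting $1+\tfrac1{a+2c}=\tfrac{a+1+2c}{a+2c}$ being all that remains.

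To bound $[\overrightarrow{R}]$ I would use that the continued fraction $[x_1,x_2,x_3,\ldots]$ is decreasing in each of $x_1,x_3,x_5,\ldots$ and increasing in each of $x_2,x_4,\ldots$ (immediate from $[x_1,x_2,\ldots]=1/(x_1+[x_2,x_3,\ldots])$ by induction). Since the partial quotients of $\overrightarrow{R}$ range over the box $\{b,b+1\}\times\{a,a+1\}\times\{b,b+1\}\times\cdots$, for every fixed length the largest admissible value of $[\overrightarrow{R}]$ is attained at $[b,a+1,b,a+1,\ldots]$ and the smallest at $[b+1,a,b+1,a,\ldots]$. It then remains to optimise over the length; this is the one place where the hypothesis ``at least two partial quotients'' is used, and it is essential, since a block of length one would give the strictly larger value $1/b$ on the maximising side. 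By the classical oscillation of convergents (odd-indexed convergents decreasing to the limit, even-indexed increasing to it), among truncations of length $\ge2$ the quantity $[b,a+1,b,a+1,\ldots]$ is largest at length $3$, i.e.\ equals $[b,a+1,b]=c^{(1)}_{a,a+1;b}$, while $[b+1,a,b+1,a,\ldots]$ is smallest at length $2$, i.e.\ equals $[b+1,a]=c^{(2)}_{a,a+1;b}$. The same two bounds apply verbatim to $[\overleftarrow{P}]$, and \textbf{(i)} follows.

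The continuant identity in step one is routine bookkeeping; the only delicate point is the length optimisation in step three, where one must keep track of the parity of the truncation length to see that the extremal values are the finite continued fractions $[b,a+1,b]$ and $[b+1,a]$ rather than the (irrational) limits of the associated periodic fractions, and where the ``at least two partial quotients'' assumption is precisely what rules out the competing length-one value. For \textbf{(ii)} the distinguished partial quotient $b$ sits at an even position, so that $\overleftarrow{P_1}$ and $\overrightarrow{R_1}$ now alternate starting from $\{a,a+1\}$; repeating the argument gives $[a,b+1,a]=c^{(1)}_{a;b,b+1}$ and $[a+1,b]=c^{(2)}_{a;b,b+1}$ in place of the constants above.
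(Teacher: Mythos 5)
Your proof is correct and follows essentially the same route as the paper: both derive the identity $\frac{\langle P,a+1,R\rangle}{\langle P,a,R\rangle}=1+\frac{1}{a+[\overleftarrow{P}]+[\overrightarrow{R}]}$ from the continuant expansion and then bound the two continued fractions between $[b+1,a]$ and $[b,a+1,b]$ using monotonicity in the partial quotients and the oscillation of convergents. Your explicit justification of the length optimisation (and of where the length-$\geqslant 2$ hypothesis enters) is slightly more detailed than the paper's, but the argument is the same.
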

\begin{proof}
Докажем первую оценку. Применяя равенства (\ref{A1}) и (\ref{knu}), получаем:
\begin{multline}
\frac{\langle P, a+1,R\rangle}{\langle P,a,R\rangle}=\frac{\langle P, a+1, R\rangle}{\langle P\rangle\langle R\rangle}\frac{\langle P\rangle\langle R\rangle}{\langle A,a,B\rangle}=\\\\=\frac{(a+1)\langle P\rangle\langle R\rangle+\langle P^-\rangle\langle R\rangle+\langle P\rangle\langle R_-\rangle}{\langle P\rangle\langle R\rangle}\frac{\langle P\rangle\langle R\rangle}{a\langle P\rangle\langle R\rangle+\langle P^-\rangle\langle R\rangle+\langle P\rangle\langle R_-\rangle}=\\ \\=\frac{a+1+[\overleftarrow{P}]+[\overrightarrow{R}]}{a+[\overleftarrow{P}]+[\overrightarrow{R}]}=1+\frac1{a+[\overleftarrow{P}]+[\overrightarrow{R}]}.
\end{multline}
Оценивая цепные дроби правой части последнего равенства снизу через $c^{(2)}_{a,a+1;b}$, а сверху через $c^{(1)}_{a,a+1;b}$, получаем оценку (\ref{aa+1}); оценка (\ref{bb+1}) доказывается аналогично. При этом мы пользуемся тем, что увеличение неполного частного нечетного индекса увеличивает цепную дробь, а увеличение неполного частного четного индекса, соответственно, уменьшает. Кроме того, любая подходящая к $x$ дробь четного порядка меньше $x,$  а нечетного порядка - больше $x$.

Соответственно, дробь $[b, a+1, b]$ является максимумом по множеству цепных дробей вида $[A],$ где $N(A)\subseteq(\{a,a+1\},\{b,b+1\})$ и длина $A$ больше $1$. По тем же причинам дробь $[b+1, a]$ является минимумом на описанном множестве цепных дробей. Второй случай абсолютно аналогичен. 
\end{proof}
Таким образом, мы получили верхние и нижние оценки изменения континуанта при заменах вида
$$(a\to a+1)~\text{и}~(b\to b+1)$$
Отдельно выделим формулу:
\begin{equation}
\label{kor}
\frac{\langle P, a+1, R\rangle}{\langle P,a,R\rangle}=\frac{a+1+[\overleftarrow{P}]+[\overrightarrow{R}]}{a+[\overleftarrow{P}]+[\overrightarrow{R}]}
\end{equation}
Отметим, что если $P$ или $R$ имеют длину меньше $2,$ то можно оценить цепные дроби сверху единицей, а снизу нулем, тогда формула  (\ref{kor}) превратится в
$$\frac43\leqslant\frac{\langle P, a+1, R\rangle}{\langle P,a,R\rangle}\leqslant\frac{a+3}{a+2}\leqslant\frac{a+1}{a}\leqslant2$$
Обозначим 
$$
c_l(a,a+1;b)=1+\frac1{a+2c^{(1)}_{a,a+1;b}}~\text{и}~c_r(a,a+1;b)=1+\frac1{a+2c^{(2)}_{a,a+1;b}}
$$
нижняя и верхняя оценки на величину $\frac{\langle P, a+1,R\rangle}{\langle P,a,R\rangle}$ из неравенства (\ref{aa+1}).\\
Аналогично определим
$$c_l(a;b,b+1)=1+\frac1{b+2c^{(1)}_{a;b,b+1}}~\text{и}~
c_r(a;b,b+1)=1+\frac1{b+2c^{(2)}_{a;b,b+1}}$$
нижнюю и верхнюю оценки на величину $\frac{\langle P', a+1,R'\rangle}{\langle P',a,R'\rangle}$ из неравенства (\ref{bb+1}).\\
Рассмотрим теперь замену
$$((a\to a+1), (b+1\to b), (b+1\to b))$$ 
в континуанте $\langle A\rangle,$ для которого $N(A)\subseteq(\{a,a+1\},\{(b,b+1\}),$ то есть замену \textit{любого} неполного частного с нечетным индексом, равного $a,$ на $a+1$ и замена \textit{любых} двух неполных частных с четным индексом, равных $b+1,$ на $b$. Нетрудно видеть, что рассмотренная замена является $(1, 2)-$вариацией. Выясним, пользуясь оценками предыдущей леммы, в каких случаях можно заведомо утверждать, что она увеличивает континуант. Для этого докажем следующее простое, но крайне полезное в дальнейшем утверждение.
\begin{lem}
\label{lemkor}
Пусть $\langle A\rangle$ - произвольный континуант, для которого выполнено $N(A)\subseteq(\{a,a+1\},\{b,b+1\}).$ Если при этом 
$$c_l(a,a+1,b)>c_r^2(a,b,b+1),$$ 
то замена
$${((a\to a+1), (b+1\to b), (b+1\to b))}$$
увеличивает континуант.\\ Если же
$$c_r(a,a+1,b)<c_l^2(a,b,b+1),$$ 
то замена
$$((a+1\to a), (b\to b+1), (b\to b+1))$$
увеличивает континуант.
\end{lem}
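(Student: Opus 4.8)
The plan is to realize the composite replacement as a sequence of three elementary one-quotient changes and to track, using the estimates (\ref{aa+1})--(\ref{bb+1}) of the preceding lemma, by what factor the continuant is multiplied at each step. Concretely, I would decompose $((a\to a+1),(b+1\to b),(b+1\to b))$ into the ordered steps: change the chosen odd-index quotient $a\mapsto a+1$, then change one chosen even-index quotient $b+1\mapsto b$, then change the second chosen even-index quotient $b+1\mapsto b$. The key remark is that every continuant occurring along the way still satisfies $N(\cdot)\subseteq(\{a,a+1\},\{b,b+1\})$ --- the operations $a\to a+1$ and $b+1\to b$ do not leave this set --- so at each step the head and tail continued fractions entering the identity (\ref{kor}) for an odd-index quotient (and the analogous identity for an even-index one) remain squeezed between $c^{(1)}_{a,a+1;b},c^{(1)}_{a;b,b+1}$ on one side and $c^{(2)}_{a,a+1;b},c^{(2)}_{a;b,b+1}$ on the other; hence the two-sided bounds (\ref{aa+1}) and (\ref{bb+1}) apply verbatim to each of the three elementary steps.

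Next I would simply multiply the factors. By (\ref{aa+1}) the step $a\to a+1$ multiplies the continuant by at least $c_l(a,a+1;b)$; by (\ref{bb+1}) the step $b\to b+1$ multiplies it by at most $c_r(a;b,b+1)$, so the inverse step $b+1\to b$ multiplies it by at least $1/c_r(a;b,b+1)$, and this applies to each of the two even-index changes. Composing the three steps, the replacement multiplies $\langle A\rangle$ by a factor at least
\[
\frac{c_l(a,a+1;b)}{c_r^{2}(a;b,b+1)},
\]
which exceeds $1$ exactly under the hypothesis $c_l(a,a+1;b)>c_r^{2}(a;b,b+1)$; therefore $\langle A\rangle$ strictly increases. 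The second assertion follows by the symmetric computation: decompose $((a+1\to a),(b\to b+1),(b\to b+1))$ into elementary steps and use (\ref{aa+1}), (\ref{bb+1}) in the reverse direction --- $a+1\to a$ contributes a factor at least $1/c_r(a,a+1;b)$ and each $b\to b+1$ contributes a factor at least $c_l(a;b,b+1)$ --- so the overall factor is at least $c_l^{2}(a;b,b+1)/c_r(a,a+1;b)$, which is $>1$ precisely when $c_r(a,a+1;b)<c_l^{2}(a;b,b+1)$.

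The only delicate point is guaranteeing that (\ref{aa+1})--(\ref{bb+1}) genuinely apply at each step, i.e. that the modified quotients are not so close to an end of $\langle A\rangle$ that the adjacent block has fewer than two partial quotients; in that boundary case (\ref{kor}) only yields a ratio between $4/3$ and $2$. I expect this essentially combinatorial bookkeeping --- choosing the modified quotients in the interior, or checking that the few possible end effects cannot destroy the strict inequality --- to be the main (indeed the only) obstacle, the analytic substance being already packaged in the bounds $c_l,c_r$ of the previous lemma.
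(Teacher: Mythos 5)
Your proposal is correct and coincides with the paper's own (one-line) proof: the author likewise observes that the first assertion follows by applying inequality (\ref{aa+1}) once and inequality (\ref{bb+1}) twice, i.e.\ by factoring the $(1,2)$-variation into three elementary changes and multiplying the resulting bounds, with the second assertion symmetric. Your additional remark about end effects (when $P$ or $R$ has fewer than two partial quotients) is a legitimate point the paper leaves implicit, handled elsewhere by the standing convention that such boundary variations are treated separately.
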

\begin{proof}
Для доказательства первого утверждения достаточно один раз применить неравенство (\ref{aa+1}) и дважды - неравенство (\ref{bb+1}). Второе утверждение доказывается аналогично.
\end{proof}
Назовем $(1,2)-$вариации, для которых выполняются условия леммы \ref{lemkor}, {\bfseries{абсолютно увеличивающими}}. Найдем конкретное выражение таких замен.
\begin{lem}
\label{a2a}
$(1,2)-$вариации
$$((a\to a+1),(2a+2\to 2a+1),(2a+2\to 2a+1))$$ при $N(A)\subseteq(\{a,a+1\},\{2a+1,2a+2\})$ и $a\geqslant1$\\
и
$$((a+1\to a),(2a\to 2a+1),(2a\to 2a+1))$$ при $N(A)\subseteq(\{a,a+1\},\{2a,2a+1\})$ и
$a\geqslant2$\\
являются абсолютно увеличивающими.
\end{lem}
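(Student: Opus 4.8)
Both statements are special cases of Lemma~\ref{lemkor}. For the first family of replacements one has $b=2a+1$ and $b+1=2a+2$, so it suffices to verify
$$
c_l(a,a+1;2a+1)\;>\;c_r^2(a;2a+1,2a+2)\qquad(a\ge 1),
$$
and for the second family $b=2a$ and $b+1=2a+1$, so it suffices to verify
$$
c_r(a,a+1;2a)\;<\;c_l^2(a;2a,2a+1)\qquad(a\ge 2).
$$
Thus everything reduces to two explicit inequalities between functions of the single integer variable $a$.

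The plan is to make these inequalities completely explicit and then check them. First substitute the value of $b$ into the definitions of $c^{(1)}$ and $c^{(2)}$: every continued fraction that occurs has length at most three — for example $c^{(1)}_{a,a+1;2a+1}=[2a+1,a+1,2a+1]$, $c^{(2)}_{a;2a+1,2a+2}=[a+1,2a+1]$, $c^{(1)}_{a;2a,2a+1}=[a,2a+1,a]$, $c^{(2)}_{a,a+1;2a}=[2a+1,a]$ — and each is computed exactly as a rational function of $a$ by folding the continuant recursion once or twice (thus $[2a+1,a+1,2a+1]=\frac{2a^2+3a+2}{(2a+1)(2a^2+3a+3)}$, and so on). Plugging these into $c_l=1+\bigl(a+2c^{(1)}\bigr)^{-1}$ and $c_r=1+\bigl(b+2c^{(2)}\bigr)^{-1}$ writes $c_l$ and $c_r$ as ratios of explicit polynomials in $a$; clearing denominators in the two displayed inequalities turns each of them into the assertion that a certain explicit polynomial in $a$ is positive on the stated range.

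It then remains to check that positivity. I would do this by exhibiting the polynomial, after the shift $a\mapsto a+1$ (resp.\ $a\mapsto a+2$), as having nonnegative coefficients, or, failing that, by a direct estimate of the numerator of $c_l-c_r^2$ (resp.\ of $c_l^2-c_r$). Two points need care: the small values $a=1$ in the first inequality and $a=2$ in the second should be treated by hand, and one should note that the second inequality genuinely fails at $a=1$ — which is exactly why the hypothesis there is $a\ge 2$. One must also recall that the estimates (\ref{aa+1})--(\ref{bb+1}) underlying Lemma~\ref{lemkor} require the two wings of the continuant to contain at least two partial quotients each; this is part of the standing hypothesis on $\langle A\rangle$, so no additional case analysis is needed.

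The difficulty here is metric rather than conceptual. Both $c_l$ and $c_r$ equal $1+\tfrac1a+O(a^{-2})$, so $c_l-c_r^2$ (and likewise $c_l^2-c_r$) tends to $0$ like $a^{-2}$ as $a\to\infty$; consequently the crude bounds $0<c^{(i)}<1$ are far too weak, and one must keep the short continued fractions exactly (equivalently, expand them to order $a^{-3}$). The sign of each comparison is decided only at the $a^{-2}$ term, so the real work — and the only place where a slip is likely — is the careful bookkeeping of these lower-order terms, together with the handful of small cases.
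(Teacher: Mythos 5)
Your proposal is correct and follows exactly the paper's own route: reduce both claims to the two inequalities of Lemma~\ref{lemkor}, evaluate the short continued fractions $c^{(1)}, c^{(2)}$ exactly as rational functions of $a$, clear denominators, and verify positivity of the resulting explicit polynomials (which the paper writes out, obtaining numerators $16a^8+\ldots-32$ for $a\geqslant1$ and $8a^9+\ldots-4$ for $a\geqslant2$). Your observations that crude bounds on the $c^{(i)}$ are too weak because the sign is decided only at the $a^{-2}$ term, and that the second inequality genuinely fails at $a=1$, are both accurate and consistent with the paper's computation.
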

\begin{proof}
Проверим выполнение условий предыдущей леммы.\\
Поскольку
$$c_l(a,a+1;2a+1)=\frac{4a^4+12a^3+21a^2+18a+7}{4a^4+8a^3+13a^2+9a+4}$$
$$c_r^2(a;2a+1,2a+2)=\frac{4(2a^3+5a^2+7a+3)^2}{(4a^3+8a^2+11a+4)^2},$$
то, сравнивая оценки, получаем:
\begin{multline}
\label{a1}
c_l(a,a+1;2a+1)-c_r^2(a;2a+1,2a+2)=\\=\frac{16a^8+96a^7+264a^6+432a^5+417a^4+198a^3-29a^2-92a-32}{(4a^3+8a^2+11a+4)^2(4a^4+8a^3+13a^2+9a+4)}
\end{multline}
что, очевидно, больше нуля при $a\geqslant1.$\\
Докажем аналогично вторую часть леммы: из
$$c_r(a,a+1;2a)=\frac{2a^3+3a^2+4a+1}{2a^3+a^2+3a},$$
$$c_l^2(a;2a,2a+1)=\frac{(4a^4+4a^3+9a^2+4a+2)^2}{4(2a^4+a^3+4a^2+a+1)^2},$$
получаем, что:
\begin{multline}
c_l^2(a;2a,2a+1)-c_r(a,a+1;2a)=\\=\frac{8a^9+12a^8+18a^7+13a^6-13a^5-24a^4-36a^3-28a^2-12a-4}{4a(2a^4+a^3+4a^2+a+1)^2(2a^2+a+3)},
\end{multline}
что больше нуля при $a\geqslant2.$ Лемма доказана.
\end{proof}
Доказанная лемма представляет собой "граничный" случай:\\ при $N(A)\subseteq(\{a,a+1\},\{2a,2a+1\})$ для увеличения континуанта необходимо увеличить неполные частные с четным индексом и уменьшить с нечетным, а при $N(A)\subseteq(\{a,a+1\},\{2a+1,2a+2\})$ - наоборот увеличить с нечетным и уменьшить с четным. Остальные случаи, как утверждает следующая лемма, проще:

\begin{lem}[Лемма о монотонности]
\label{monot}
Если замена $$((a\to a+1), (b+1\to b), (b+1\to b))~\text{при}~N(A)=(\{a,a+1\},\{b,b+1\}) -$$
абсолютно увеличивающая $(1,2)-$ вариация, то замены
$$((a-1\to a), (b+1\to b), (b+1\to b))~\text{при}~N(A)=(\{a-1,a\},\{b,b+1\}),$$
$$((a\to a+1), (b+2\to b+1), (b+2\to b+1))~\text{при}~{N(A)=(\{a,a+1\},\{b+1,b+2\})}$$
являются абсолютно увеличивающими.\\
Если же, напротив
$$((a+1\to a), (b\to b+1), (b\to b+1)) ~\text{при}~N(A)=(\{a,a+1\},\{b,b+1\})-$$ 
абсолютно увеличивающая замена, то $(1,2)-$вариации
$$((a+2\to a+1), (b\to b+1), (b\to b+1))~\text{при}~{N(A)=(\{a+1,a+2\},\{b,b+1\})},$$ 
$$((a+1\to a), (b-1\to b), (b-1\to b))~\text{при}~N(A)=(\{a,a+1\},\{b-1,b\})$$
также являются абсолютно увеличивающими заменами.
\end{lem}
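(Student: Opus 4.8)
\textit{Plan of proof.}
Via Lemma \ref{lemkor}, being an \emph{absolutely increasing} $(1,2)$-variation means exactly that one of the two inequalities of that lemma holds, so the statement splits into four elementary comparisons among the explicit quantities $c_l$ and $c_r$. The first half of the lemma amounts to: if $c_l(a,a+1;b)>c_r^2(a;b,b+1)$, then $c_l(a-1,a;b)>c_r^2(a-1;b,b+1)$ and $c_l(a,a+1;b+1)>c_r^2(a;b+1,b+2)$; the second half amounts to: if $c_r(a,a+1;b)<c_l^2(a;b,b+1)$, then $c_r(a+1,a+2;b)<c_l^2(a+1;b,b+1)$ and $c_r(a,a+1;b-1)<c_l^2(a;b-1,b)$. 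In each case I will show that the indicated shift moves the left-hand quantity weakly in the favourable direction and the right-hand quantity weakly in the opposite direction, and then close the gap by transitivity with the hypothesis; since $c_l,c_r>1$, passing to squares preserves the ordering.

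The only tool is the monotonicity of a finite continued fraction in its partial quotients — raising one at an odd position lowers $[0;\ldots]$, raising one at an even position raises it — together with the remark that the $c^{(1)},c^{(2)}$ occurring here are obtained from one another by literally replacing a single entry. Consider the $a$-decrement in the first regime. On the left, $c_l(a,a+1;b)=1+\bigl(a+2c^{(1)}_{a,a+1;b}\bigr)^{-1}$ with $c^{(1)}_{a,a+1;b}=[b,a+1,b]$ becomes $c_l(a-1,a;b)=1+\bigl((a-1)+2c^{(1)}_{a-1,a;b}\bigr)^{-1}$ with $c^{(1)}_{a-1,a;b}=[b,a,b]$: the integer term of the denominator drops by one and, since $a+1\mapsto a$ sits at an even position, the inner continued fraction drops too, so the denominator strictly decreases and $c_l$ strictly increases — no quantitative estimate is needed. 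On the right, $c_r(a;b,b+1)=1+\bigl(b+2[a+1,b]\bigr)^{-1}$ becomes $c_r(a-1;b,b+1)=1+\bigl(b+2[a,b]\bigr)^{-1}$, and since $a+1\mapsto a$ sits at an odd position the inner continued fraction rises, the denominator grows, and $c_r$ strictly decreases. Hence $c_l(a-1,a;b)>c_l(a,a+1;b)>c_r^2(a;b,b+1)>c_r^2(a-1;b,b+1)$, as required.

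The remaining three cases run on the same bookkeeping, the only care being which positions are odd and which are even. For the $b$-increment in the first regime: on the left $b\mapsto b+1$ hits the two odd positions of $[b,a+1,b]$, lowering the inner fraction, so $c_l$ rises; on the right $b\mapsto b+1$ raises the integer term and, at the even position of $[a+1,b]$, raises the inner fraction, so the denominator of $c_r$ grows on both counts and $c_r$ drops. In the second regime the favourable directions are reversed: for the $a$-increment one checks $c_r(a+1,a+2;b)\le c_r(a,a+1;b)$ (integer term up, and $a\mapsto a+1$ at the even position of $[b+1,a]$ raises it) and $c_l(a+1;b,b+1)\ge c_l(a;b,b+1)$ (the two $a$'s at odd positions of $[a,b+1,a]$ rise, lowering its value); for the $b$-decrement one checks $c_r(a,a+1;b-1)\le c_r(a,a+1;b)$ ($b+1\mapsto b$ at the odd position of $[b+1,a]$ raises it, enlarging the denominator of $c_r$) and $c_l(a;b-1,b)\ge c_l(a;b,b+1)$ (integer term down, and $b+1\mapsto b$ at the even position of $[a,b+1,a]$ lowers it, so the denominator of $c_l$ shrinks on both counts). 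In every instance the shift of the integer term and the induced shift of the continued fraction push the denominator the same way, so strictness is automatic, and the conclusions follow by transitivity exactly as in the worked case.

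\textit{Main obstacle.} There is no deep difficulty here: Lemma \ref{lemkor} already reduces everything to $c_l$ and $c_r$, and Lemma \ref{a2a} has absorbed the one heavy polynomial computation, so what remains is purely the sign-and-parity bookkeeping across the four shifts. The one subtlety worth flagging is the implicit positivity of the partial quotients involved ($a\ge 2$ for the $a$-decrement, $b\ge 2$ for the $b$-decrement), which is precisely the range in which the continuant identities behind the $c_l,c_r$ bounds of the preceding lemma are valid; outside it the corresponding statements are vacuous.
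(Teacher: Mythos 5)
Your proof is correct and follows essentially the same route as the paper's: reduce via Lemma \ref{lemkor} to four comparisons between the explicit bounds $c_l$ and $c_r$, settle each by the monotonicity of a continued fraction in its partial quotients (with the right parity bookkeeping), and conclude by transitivity. The only cosmetic difference is that for the step $c_l(a-1,a;b)>c_l(a,a+1;b)$ the paper invokes the crude bound that two continued fractions sharing their first partial quotient differ by at most $\frac12$, whereas you determine the sign of the change directly from parity (and you write out all four cases, while the paper does one and declares the rest analogous) — both arguments are valid.
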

\begin{proof}
Докажем первое утверждение. Поскольку первые неполные частные цепных дробей $c^{(1)}_{a,a+1;b}$ и $c^{(1)}_{a-1,a;b}$ совпадают, эти дроби отличаются не более, чем на $\frac12$. Следовательно, выполнена цепочка неравенств:
$$c_l(a,a+1;b)=1+\frac1{a+2c^{(1)}_{a,a+1;b}}<1+\frac1{a-1+2c^{(1)}_{a-1,a;b}}=c_l(a-1,a;b)$$ 
Сравним теперь $c_r^2(a;b,b+1)$ и $c_r^2(a-1;b,b+1)$. Они равны соответственно
$$(1+\frac1{b+2c^{(2)}_{a;b,b+1}})^2~ \text{и}~ (1+\frac1{b+2c^{(2)}_{a-1;b,b+1}})^2.$$
Заметим, что:
$$c^{(2)}_{a;b,b+1}=[a+1,b]<[a,b]=c^{(2)}_{a-1;b,b+1},$$
а следовательно 
$$c_r^2(a;b,b+1)>c_r^2(a-1;b,b+1)$$
Поскольку по условию $c_r^2(a,b,b+1)<c_l(a,a+1,b)$, получаем, что
$$c_r^2(a-1;b,b+1)<c_r^2(a;b,b+1)<c_l(a,a+1;b)<c_l(a-1,a;b).$$
Для завершения доказательства первого утверждения остается применить лемму \ref{lemkor}. Остальные утверждения доказываются аналогично.
\end{proof}
Во всех дальнейших леммах данной части мы будем пользоваться следующим, не ограничивающим общность, предположением:\\
если рассматривается замена
$$((a\to a+1), (b+1\to b), (b+1\to b))~\text{при}~N(A)\subseteq(\{a,a+1\},\{b,b+1\}),$$
то существует хотя бы 2 неполных частных нечетного индекса, равных $b+1;$ если же рассматривается замена
$$((a+1\to a), (b\to b+1), (b\to b+1)) ~\text{при}~N(A)\subseteq(\{a,a+1\},\{b,b+1\}),$$
то существует хотя бы 2 неполных частных нечетного индекса, равных $b.$
\begin{foll}
\label{mon}
Если ${N(A)=(\{a,a+1\},\{b,b+1\})}, a\geqslant2$, то для континуанта $\langle A\rangle$ существует абсолютно увеличивающая $(1,2)-$вариация.
\end{foll}
\begin{proof}
Действительно, если $b\leqslant2a,$ то по леммам \ref{a2a} и \ref{monot} замена
$$((a+1\to a),(b\to b+1),(b\to b+1))$$
является абсолютно увеличивающей. Если же  $b\geqslant2a+1,$ то аналогично замена
$$((a\to a+1),(b+1\to b),(b+1\to b))$$
является абсолютно увеличивающей, что и требовалось доказать.
\end{proof}
\begin{lem}
\label{to3}
Если $N(A)=(\{a\},\{b,b+1\})$ и при этом $b$ не равно $2a-1$ или $2a$ и $a>1,$ то для данного континуанта существует абсолютно увеличивающая $(1,2)-$вариация.
\end{lem}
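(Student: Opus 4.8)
The plan is to view this lemma as the degenerate case of Corollary \ref{mon} in which one of the two value-sets of $N(A)$ has shrunk to the single value $a$. The novelty is that of the two absolutely increasing $(1,2)$-variations that Corollary \ref{mon} produces, the one that must lower a partial quotient from $a+1$ to $a$ is no longer available, because there is no $a+1$ to lower; this missing move is exactly what shifts the forbidden window from $\{2a,2a+1\}$, as in Lemma \ref{a2a}, to $\{2a-1,2a\}$.

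I would split on the size of $b$. If $b\geqslant 2a+1$, the increasing move supplied by Corollary \ref{mon} is $((a\to a+1),(b+1\to b),(b+1\to b))$, which uses no partial quotient equal to $a+1$; hence it is still available, and re-running the proof of Corollary \ref{mon} (that is, Lemma \ref{a2a} together with the monotonicity Lemma \ref{monot}) with $N(A)=(\{a\},\{b,b+1\})\subseteq(\{a,a+1\},\{b,b+1\})$ produces the desired absolutely increasing $(1,2)$-variation. If $b\leqslant 2a-2$, that route fails — Corollary \ref{mon}'s move for small $b$ is $((a+1\to a),(b\to b+1),(b\to b+1))$ and needs an $a+1$ — so I would instead use $((a\to a-1),(b\to b+1),(b\to b+1))$, which is legitimate since $a>1$ gives $a-1\geqslant1$. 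By the criterion of Lemma \ref{lemkor}, in the direction that gives $c_l^2>c_r$ (with the factor $c_r$ now coming from a change $a\to a-1$, hence replaced by the bound for the ratio $\langle P,a,R\rangle/\langle P,a-1,R\rangle$), this move is absolutely increasing once the appropriate inequality $c_l^2(a;b,b+1)>c_r(a-1,a;\,\cdot\,)$ holds. (If the side on which two quotients must be moved does not contain two quotients with the required value, then $N(A)$ is really $(\{a\},\{b\})$ or $(\{a\},\{b+1\})$, a configuration outside this lemma's hypothesis; I would settle it directly, by reflection and unit-variation arguments as in Theorem \ref{edvar}.)

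It remains to prove that inequality for all $b\leqslant 2a-2$. Here one first has to recompute the relevant bounds $c^{(1)},c^{(2)}$ in the degenerate geometry: with the $a$-side reduced to $\{a\}$, the partial quotients flanking a changed entry alternate $a$ with $\{b,b+1\}$, so the extremal continued fractions are of type $[b,a,b,\dots]$ and $[b{+}1,a,b{+}1,\dots]$ and are truncated exactly as in the estimates leading to formula (\ref{kor}). Then the monotonicity argument of Lemma \ref{monot} applies verbatim (the $c^{(i)}$ are monotone in $b$ and in their leading quotients), reducing the inequality to its hardest instance $b=2a-2$; there it becomes, after clearing denominators, the positivity of an explicit polynomial in $a$ for $a\geqslant2$, to be verified just as in the proof of Lemma \ref{a2a}, and the same computation exhibits the sign change at $b\in\{2a-1,2a\}$ that forces those values out. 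I expect the only real labour to be this polynomial check together with the recomputation of the degenerate $c$-bounds — the singleton side forces slightly different extremal continued fractions than in Lemma \ref{a2a} — while the rest is a transcription of Corollary \ref{mon}, Lemma \ref{a2a} and Lemma \ref{monot}.
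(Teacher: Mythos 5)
Your proposal is correct and follows essentially the same route as the paper: the same case split on $b$ versus $2a$, the same two $(1,2)$-variations ($((a\to a+1),(b+1\to b),(b+1\to b))$ for $b>2a$ and $((a\to a-1),(b\to b+1),(b\to b+1))$ for $b<2a-1$), justified by Lemma \ref{a2a} together with the monotonicity Lemma \ref{monot}. The only difference is that the paper avoids your planned recomputation of the degenerate $c$-bounds and the extra polynomial check by simply observing that $N(A)=(\{a\},\{b,b+1\})\subset(\{a-1,a\},\{b,b+1\})$, so the already-proved second half of Lemma \ref{a2a} (with $a$ replaced by $a-1$) and Lemma \ref{monot} apply verbatim.
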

\begin{proof}
Пусть $b<2a-1.$ Тогда рассмотрим замену
$$((a\to a-1),(b\to b+1),(b\to b+1))$$
Поскольку $N(A)\subset(\{a-1,a\},\{b,b+1\})$, по леммам \ref{a2a} и \ref{monot} она будет абсолютно увеличивающей, что и требовалось доказать. Аналогично рассматривается случай  $b>2a.$ 
\end{proof}
\begin{lem}
\label{to31}
Если $N(A)=(\{a,a+1\},\{b\})$ и при этом $b\ne2a+1$ и $a>1,$ то для данного континуанта существует абсолютно увеличивающая $(1,2)-$вариация.
\end{lem}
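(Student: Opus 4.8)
The plan is to derive Lemma~\ref{to31} from Lemma~\ref{a2a} and the monotonicity Lemma~\ref{monot}, by the same device that proves Lemma~\ref{to3}: the present statement is the ``transpose'' of that one, with a singleton even family in place of a singleton odd family. Write $b$ for the common value of the even partial quotients of $\langle A\rangle$. The hypothesis $N(A)=(\{a,a+1\},\{b\})$ guarantees that both values $a$ and $a+1$ actually occur at odd positions, so each $(1,2)$-variation considered below has an entry to act on; and for a continuant that is not too short there are at least two even entries equal to $b$, as the standing convention before Corollary~\ref{mon} requires. The argument splits into the cases $b\le 2a$ and $b\ge 2a+2$, whose union is exactly $b\ne 2a+1$.

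First take $b<2a+1$, i.e.\ $b\le 2a$. I consider the $(1,2)$-variation $((a+1\to a),(b\to b+1),(b\to b+1))$; since $N(A)\subseteq(\{a,a+1\},\{b,b+1\})$ it is enough to show it is absolutely increasing on that configuration. By the second assertion of Lemma~\ref{a2a} the variation $((a+1\to a),(2a\to 2a+1),(2a\to 2a+1))$ is absolutely increasing whenever $N\subseteq(\{a,a+1\},\{2a,2a+1\})$ and $a\ge2$, which holds as $a>1$. Applying the ``lowering'' half of Lemma~\ref{monot} a total of $2a-b$ times (each intermediate pair $\{c,c+1\}$, $b\le c\le 2a$, consisting of positive integers since $b\ge1$) transports this down to $(\{a,a+1\},\{b,b+1\})$, where the variation is precisely the one displayed.

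The case $b>2a+1$, i.e.\ $b\ge 2a+2$, is dual: now $N(A)\subseteq(\{a,a+1\},\{b-1,b\})$, and I consider $((a\to a+1),(b\to b-1),(b\to b-1))$. By the first assertion of Lemma~\ref{a2a} the variation $((a\to a+1),(2a+2\to 2a+1),(2a+2\to 2a+1))$ is absolutely increasing on $N\subseteq(\{a,a+1\},\{2a+1,2a+2\})$, and the ``raising'' half of Lemma~\ref{monot}, applied $b-2a-2$ times, transports this up to $(\{a,a+1\},\{b-1,b\})$. Together the two cases exhaust all $b\ne 2a+1$, which proves the lemma.

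The only real work is bookkeeping: checking that the variation written for the target configuration is literally the instance of Lemma~\ref{a2a} carried along the chain of monotonicity steps, and confirming that the excluded value $b=2a+1$ is genuinely the obstruction — there $\langle A\rangle$, whose even family is the single value $2a+1$, offers no partial quotient that could play the role of $2a+2$ in the first variation or of $2a$ in the second, so neither monotone chain can reach it, consistently with Lemma~\ref{to31} claiming nothing for that value. No new continuant estimate beyond those already in the excerpt is required.
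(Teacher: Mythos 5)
Your proof is correct and follows essentially the same route as the paper's: anchor at the two base cases of Lemma~\ref{a2a} (at $\{2a,2a+1\}$ and $\{2a+1,2a+2\}$) and transport them to $\{b,b+1\}$, respectively $\{b-1,b\}$, by iterating the monotonicity Lemma~\ref{monot}. Your case split $b\leqslant 2a$ versus $b\geqslant 2a+2$ is in fact the correct reading of the argument — the paper's literal text splits at $b<2a-1$ and $b>2a-1$, a threshold apparently copied from Lemma~\ref{to3} by mistake, whereas the excluded value here is $b=2a+1$ — so your bookkeeping is, if anything, more accurate than the source.
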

\begin{proof}
Пусть $b<2a-1.$ Тогда рассмотрим замену
$$((a+1\to a),(b\to b+1),(b\to b+1))$$
Поскольку $N(A)\subset(\{a,a+1\},\{b,b+1\})$, по леммам \ref{a2a} и \ref{monot} она будет абсолютно увеличивающей, что и требовалось доказать. Аналогично рассматривается случай  $b>2a-1.$ 
\end{proof}
\begin{lem}
\label{to2}
Если $N(A)=(\{a\},\{b\}), a\geqslant2$ и при этом $b$ не равно ${2a-1}, 2a$ или $2a+1,$ то для данного континуанта существует абсолютно увеличивающая $(1,2)-$вариация.
\end{lem}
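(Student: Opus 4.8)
The plan is to imitate the proofs of Lemmas~\ref{to3} and~\ref{to31}: embed $N(A)$ into a pair of consecutive sets and then appeal to Lemmas~\ref{a2a} and~\ref{monot}. Since $b$ is a positive integer with $b\notin\{2a-1,2a,2a+1\}$, either $b\ge 2a+2$ or $b\le 2a-2$, and I would treat these two cases separately. In both, $\langle A\rangle$ contains many quotients equal to $a$ and many equal to $b$, so the $(1,2)$-variations considered below are admissible.

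Assume first $b\ge 2a+2$. Then $N(A)\subseteq(\{a,a+1\},\{b-1,b\})$ with $b-1\ge 2a+1$, so this pair sits above the threshold for consecutive-set pairs. Lemma~\ref{a2a} (first variation, valid for $a\ge1$) handles the boundary pair $(\{a,a+1\},\{2a+1,2a+2\})$, and iterating Lemma~\ref{monot} propagates absolute increase of the variation $((a\to a+1),(b\to b-1),(b\to b-1))$ all the way to $(\{a,a+1\},\{b-1,b\})$, hence to $\langle A\rangle$. Assume instead $b\le 2a-2$. I would embed $N(A)\subseteq(\{a-1,a\},\{b,b+1\})$, now below the threshold, and use the variation $((a\to a-1),(b\to b+1),(b\to b+1))$. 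For $a\ge3$ this is settled by Lemma~\ref{a2a} (second variation, valid since $a-1\ge2$) at the boundary pair $(\{a-1,a\},\{2a-2,2a-1\})$ together with Lemma~\ref{monot}. For $a=2$ the constraint $b\le 2a-2$ leaves only $b\in\{1,2\}$, a finite list which I would check directly from the closed forms of $c_l,c_r$ via Lemma~\ref{lemkor}; for instance when $b=1$ the variation $((2\to1),(1\to2),(1\to2))$ is absolutely increasing because $c_r(1,2;1)=\tfrac85<\tfrac{49}{25}=c_l^2(1;1,2)$.

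I expect the only real difficulty to be this admissibility bookkeeping, which is also exactly what forces the excluded set: the absolutely increasing variation furnished by Lemmas~\ref{a2a}--\ref{monot} may modify only quotients that actually occur in $\langle A\rangle$, namely $a$ and $b$. For $b\in\{2a-1,2a,2a+1\}$ neither of the two embeddings above is available, and the natural substitutes place $N(A)$ exactly on the boundary of Lemma~\ref{a2a}, where the variation that lemma supplies would change a quotient different from both $a$ and $b$; this is why these three values must be excluded. Once the embedding is chosen to avoid that, nothing is left beyond the handful of $a=2$ verifications.
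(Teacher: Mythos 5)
Your strategy is exactly the paper's: its entire proof of this lemma is the phrase ``аналогично леммам \ref{to3} и \ref{to31}'', and your two main branches are the intended ones. For $b\geqslant 2a+2$ the embedding into $(\{a,a+1\},\{b-1,b\})$ plus Lemma \ref{a2a} (first variation, $a\geqslant1$) and Lemma \ref{monot} works for every $a\geqslant2$, and for $b\leqslant 2a-2$ with $a\geqslant3$ the embedding into $(\{a-1,a\},\{b,b+1\})$ works because $a-1\geqslant2$. You also correctly notice that $a=2$, $b\leqslant2$ falls outside the reach of Lemma \ref{a2a}. The problem is what you do with that residual case.

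You verify only $b=1$ and assert that $b=2$ is a routine computation; it is not. For $N(A)=(\{2\},\{2\})$ both candidate $(1,2)$-variations fail the criterion of Lemma \ref{lemkor}: embedding into $(\{2,3\},\{1,2\})$ gives $c_l(2,3;1)=\tfrac{23}{18}$, which is \emph{less} than $c_r^2(2;1,2)=\tfrac{25}{9}$, and embedding into $(\{1,2\},\{2,3\})$ gives $c_r(1,2;2)=\tfrac{5}{3}$, which is \emph{greater} than $c_l^2(1;2,3)=\tfrac{529}{324}$. So no absolutely increasing variation is produced for this class. Your $b=1$ check is also shaky: the variation $((2\to1),(1\to2),(1\to2))$ changes $S^{\varphi}_n$ by $+3$ (one odd-index quotient has weight $1$, each even-index quotient has weight $2$), so it leaves $M(n,S_n)$; with the $S$-preserving count of changes the analogous inequality fails as well, and a direct search shows $\langle2,1,2,1,2,1\rangle=41$ is already the maximum of $M(6,12)$, so no increasing variation exists there at all. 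These low cases are harmless for the paper, since the lemma is only invoked under $\tfrac{S_n}{n}\geqslant8$, which for $a=2$ forces $b\geqslant7\geqslant2a+2$ and puts you in your first branch; but as a proof of the lemma as literally stated, your argument leaves $a=2$, $b\in\{1,2\}$ open, and these cases do not appear to be closable by the cited lemmas.
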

\begin{proof}
Аналогично леммам \ref{to3} и \ref{to31}.
\end{proof}

Введем множество $M_3(n, S_n)\subset M_4(n, S_n)\subset M(n, S_n)$ - подмножество $M_4(n, S_n)$, состоящее из континуантов $\langle A\rangle$, для которых выполнено одно из следующих трех условий:\\
\begin{equation}
\begin{split}
\label{3t}
1)N(A)\subseteq(\{a\}, \{2a-1,2a\})\\
2)N(A)\subseteq(\{a\},\{2a,2a+1\})\\
3)N(A)\subseteq(\{a,a+1\},\{2a+1\})
\end{split}
\end{equation}
Во всех случаях считаем, что $a\geqslant2.$ Рассмотрим также $\max(M_3(n, S_n))$~- максимум по множеству $M_3(n, S_n).$\\
\begin{theor}[О сведении к трем неполным частным]
\label{thto3}
\quad\\
{Если $\frac{S_n}n\geqslant8,$ то  $\max(M(n, S_n))\asymp\max(M_3(n,S_n))$}
\end{theor}
\begin{proof}
Рассмотрим произвольный континуант $\langle A\rangle\in M(n, S_n).$ Докажем, что если он не лежит в $M_3(n, S_n),$ то существует последовательность увеличивающих преобразований, сохраняющих длину $n$ и $S_n$ и приводящих $\langle A\rangle$ в данное множество. Из теоремы \ref{edvar} можно считать, что данный континуант лежит в множестве $M_4(n, S_n)\smallsetminus M_3(n, S_n).$ Для этого покажем, что существует $(1,2)$-вариация, увеличивающая континуант $\langle A\rangle.$
Пусть $N(A)\subseteq(\{a,a+1\},\{b,b+1\}).$
Если $a>1$ и $\langle A\rangle\notin (M_3(n, S_n)),$ то существование увеличивающего преобразования прямо следует из лемм \ref{to3}, \ref{to31}, \ref{to2} и следствия \ref{mon}.

Рассмотрим случай $a=1.$\ Поскольку $\frac{S_n}n\geqslant8,~\text{то}~b\geqslant 3.$ Из первого утверждения леммы (\ref{a2a}) следует, что замена
$$((a\to a+1), (b+1\to b), (b+1\to b))$$
является абсолютно увеличивающей при $a=1, b=3,$ а следовательно, по лемме \ref{monot} она является также увеличивающей при $b>3.$

Таким образом, показано, что если континуант $\langle A\rangle\notin M_3(n, S_n),$ то для него существует абсолютно увеличивающая $(1,2)-$вариация. Что и требовалось доказать.
\end{proof}

Заметим также, что вид (одно из трех условий в (\ref{3t})), в который можно привести произвольный континуант в множестве  $M_3(n, S_n)$ однозначно определяется отношением $\frac{S_n}n$, поскольку для видов 1), 2) и 3) значение $\frac{S_n}n$ принадлежит, соответственно, отрезкам $[4a-1,4a], [4a,4a+1]$ или $[4a+1,4a+3].$
То есть отрезки пересекаются только по концам, соответствующим случаям, когда все неполные частные одинаковой четности совпадают. Таким образом, при фиксированном $a$ 
\begin{equation}
\label{snn}
\frac{S_n}n\in[4a-1,4a+3].
\end{equation}
\begin{foll}
\label{to47}
Если $\langle A\rangle\in M_3(n, S_n),~N(A)\subset(\{a,a+1\},\{b,b+1\})$ то $a\geqslant\frac{\frac{S_n}n-3}4,$ а $b\geqslant2a-1.$
\end{foll}
\begin{proof}
Первое неравенство очевидно следует из (\ref{snn}), а второе~- из (\ref{3t}).
\end{proof}
Осталось, таким образом, найти максимум по континуантам, в которых неполные частные могут принимать не более $3$ различных значений. Пусть для определенности $N(A)=(\{a,a+1\},\{b\}).$ Рассмотрим замены отражением $$\langle\overrightarrow{ P}, \overrightarrow{ Q}, \overrightarrow{ R}\rangle\to\langle\overrightarrow{ P}, \overleftarrow{ Q}, \overrightarrow{ R}\rangle,$$ где $Q=(a,b,\ldots,a+1)$ или $(a+1,b,\ldots,a),$ то есть $Q$ имеет разные начало и конец.

Пусть, например, $Q=(a,b,\ldots,a+1),$ тогда
$$[\overrightarrow{Q}]-[\overleftarrow{Q}]=[a,\ldots]-[a+1,\ldots]>0,$$
поэтому неравенство
$$\langle\overrightarrow{ P}, \overrightarrow{ Q}, \overrightarrow{ R}\rangle>\langle\overrightarrow{ P}, \overleftarrow{ Q}, \overrightarrow{ R}\rangle$$
выполняется тогда и только тогда, когда $[\overleftarrow{P}]>[\overrightarrow{R}].$\\В случае, если
$$[\overrightarrow{Q}]-[\overleftarrow{Q}]=[a+1,\ldots]-[a,\ldots]<0,$$
неравенство
$$\langle\overrightarrow{ P}, \overrightarrow{ Q}, \overrightarrow{ R}\rangle>\langle\overrightarrow{ P}, \overleftarrow{ Q}, \overrightarrow{ R}\rangle$$
выполнено тогда и только тогда, когда $[\overleftarrow{P}]<[\overrightarrow{R}].$
А поскольку все неполные частные с четными индексами совпадают, то цепные дроби $[\overleftarrow{ P}]$ и $[\overrightarrow{R}]$ имеют вид $[b, a_1,b,a_2\ldots],$ где $a_i\in\{a,a+1\}$. Обозначим их $[b,a^P_1, b, a^P_2, \ldots]$ и $[b,a^R_1, b, a^R_2,\ldots]$ соответственно. Сформулируем еще один критерий сравнения континаунтов.
\begin{lem}
\label{vrab}
Пусть $\langle A\rangle=\langle P, Q, R\rangle$ - произвольный континуант, причем $N(A)\subseteq(\{a, a+1\},\{b\})$, $Q=(q_1, \ldots, q_m)~\text{и}~q_1\ne q_m.$ Тогда:\\
$(\textbf{i})$ Если $q_1=a,  q_m=a+1,$ то замена
\begin{equation}
\label{mainotr}
\langle\overrightarrow{ P}, \overrightarrow{ Q}, \overrightarrow{ R}\rangle\to\langle\overrightarrow{ P}, \overleftarrow{ Q}, \overrightarrow{ R}\rangle
\end{equation}
увеличивает континуант тогда и только тогда, когда $\exists k: a^P_k<a^R_k$,\\причем $\forall i<K~a^P_i=a^R_i,$ то есть первое отчичающееся неполное частное цепных дробей $[\overleftarrow{ P}]$ и $[\overrightarrow{R}]$ больше в $[\overrightarrow{R}]$.\\
$(\textbf{ii})$ Если $q_1=a+1,  q_m=a,$ то замена, задаваемая формулой (\ref{mainotr}) увеличивает континуант тогда и только тогда, когда $\exists k: a^P_k>a^R_k$,\\ $\forall i<K~a^P_i=a^R_i,$ то есть первое отчичающееся неполное частное цепных дробей $[\overleftarrow{ P}]$ и $[\overrightarrow{R}]$ больше в $[\overrightarrow{P}]$.
\end{lem}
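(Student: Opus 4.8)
The plan is to derive both statements from the exact comparison identity (\ref{comparecont}) of Lemma~\ref{oms}, which here reads
\[
\langle\overrightarrow P,\overrightarrow Q,\overrightarrow R\rangle-\langle\overrightarrow P,\overleftarrow Q,\overrightarrow R\rangle=\langle\overrightarrow P\rangle\,\langle\overrightarrow Q\rangle\,\langle\overrightarrow R\rangle\cdot([\overleftarrow P]-[\overrightarrow R])\,([\overrightarrow Q]-[\overleftarrow Q]).
\]
Since the prefactor is positive, the reflection~(\ref{mainotr}) increases the continuant if and only if the product $([\overleftarrow P]-[\overrightarrow R])([\overrightarrow Q]-[\overleftarrow Q])$ is negative. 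So the whole lemma reduces to two sign computations: the sign of $[\overrightarrow Q]-[\overleftarrow Q]$, and the comparison of the continued fractions $[\overleftarrow P]$ and $[\overrightarrow R]$.

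First I would pin down the sign of $[\overrightarrow Q]-[\overleftarrow Q]$. Because $q_1\ne q_m$ and both lie in $\{a,a+1\}$, while the even-indexed partial quotients of $A$ are all equal to $b$, the endpoints $i,j$ of $Q$ inside $A$ are both odd; in particular $\overleftarrow P$ (when nonempty) ends, and $\overrightarrow R$ (when nonempty) begins, with a partial quotient $b$. Using that $[c_1,c_2,\ldots]\in\bigl(\tfrac1{c_1+1},\tfrac1{c_1}\bigr]$ one gets, in case (\textbf{i}) where $q_1=a$ and $q_m=a+1$, that $[\overrightarrow Q]>\tfrac1{a+1}>[\overleftarrow Q]$, hence $[\overrightarrow Q]-[\overleftarrow Q]>0$; therefore the reflection increases the continuant exactly when $[\overleftarrow P]<[\overrightarrow R]$. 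In case (\textbf{ii}) the same argument gives $[\overrightarrow Q]-[\overleftarrow Q]<0$, so the reflection increases the continuant exactly when $[\overleftarrow P]>[\overrightarrow R]$.

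It then remains to compare $[\overleftarrow P]$ and $[\overrightarrow R]$ partial quotient by partial quotient. Since $i$ and $j$ are odd (and $n$ even), both $\overleftarrow P$ and $\overrightarrow R$ have the alternating shape in which every odd-position partial quotient equals $b$ and the even-position ones lie in $\{a,a+1\}$; write them as $[b,a^P_1,b,a^P_2,\ldots]$ and $[b,a^R_1,b,a^R_2,\ldots]$ as in the lemma. These two continued fractions agree in every odd position, so their first disagreement occurs at some even position $2k$, i.e.\ $a^P_k\ne a^R_k$ while $a^P_i=a^R_i$ for $i<k$. Because $[c_1,c_2,\ldots]$ is a strictly increasing function of the partial quotient standing at an even position (and decreasing at an odd one), comparing at position $2k$ gives $[\overleftarrow P]<[\overrightarrow R]\iff a^P_k<a^R_k$, i.e.\ the first differing partial quotient is the larger one in $[\overrightarrow R]$; combined with the previous paragraph this is precisely statement~(\textbf{i}). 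Statement~(\textbf{ii}) follows identically, since there one needs $[\overleftarrow P]>[\overrightarrow R]$, which by the same monotonicity means $a^P_k>a^R_k$, i.e.\ the first differing partial quotient is larger on the $P$-side.

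The one place that needs genuine care is this last term-by-term comparison: one must carry out the parity bookkeeping honestly (checking that reversing a block of $A$ beginning or ending at an odd index really produces a continued fraction of the stated alternating form starting with $b$) and must dispose of the degenerate configurations — $P$ or $R$ empty, in which case $[\overleftarrow P]=0$ or $[\overrightarrow R]=0$ and the comparison is immediate, or one of the sequences $(a^P_i)$, $(a^R_i)$ being an initial segment of the other, in which case the comparison is read off from the parity of the position at which the shorter continued fraction terminates (equivalently, by appending a formal $+\infty$ there). Everything else is a direct application of (\ref{comparecont}) together with the monotonicity of $[c_1,c_2,\ldots]$ in each $c_i$, so I expect no substantive difficulty beyond that casework.
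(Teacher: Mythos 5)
Your proposal is correct and follows essentially the same route as the paper: the paper's (very terse) proof likewise reduces the statement to Kan's criterion (Lemma \ref{oms}, i.e.\ the sign of $([\overleftarrow{P}]-[\overrightarrow{R}])([\overrightarrow{Q}]-[\overleftarrow{Q}])$) together with the observation that $[\overleftarrow{P}]$ and $[\overrightarrow{R}]$ first differ at an even-indexed partial quotient, where the larger quotient gives the larger fraction. You merely spell out the sign of $[\overrightarrow{Q}]-[\overleftarrow{Q}]$ and the degenerate cases (empty $P$ or $R$, one fraction a prefix of the other) in more detail than the paper does, which the paper relegates to the remark following the lemma.
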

\begin{proof}
Заметим, что если цепные дроби $[\overleftarrow{ P}]$ и $[\overrightarrow{R}]$ отличаются неполным частным с четным индексом, то больше та дробь, у которой отличающееся неполное частное больше. Для завершения доказательства остается только воспользоваться леммой \ref{oms}.
\end{proof}
Заметим, что если одна из цепных дробей (более короткая) обрывается там, где заканчивается континуант, а все неполные частные более короткой цепной дроби совпадают с соответствующими неполными частными длинной, то следующее неполное частное короткой дроби можно считать равным $+\infty.$ В самом деле, если короткая цепная дробь состоит из нечетного количества неполных частных, то она больше длинной, а если из четного, то меньше, поскольку является подходящей дробью к более длинной.

Научимся теперь находить максимум по континуантам из множества $M_3(n, S_n)$. Без ограничения общности можем считать, что  произвольный континуант $\langle A\rangle\in M_3(n, S_n)$ состоит из блоков $(a,b)$ и $(a+1, b).$ Обозначим их $C^0_0$ и $C^0_1$ соответственно.
\begin{lem}
\label{simlem}
Если отношение количества блоков $C^0_0$ к количеству блоков $C^0_1$ равно ${m+\alpha}$, где 
${m\in\mathbb{N}},~{0<\alpha<1},$ то можно увеличивающими преобразованиями отражения добиться того, чтобы континуант состоял только из блоков
$$
C^1_0=(\underbrace{C^0_0,\ldots, C^0_0}_{m},C^0_1)~\text{и}~C^1_1=(\underbrace{C^0_0,\ldots, C^0_0}_{m+1},C^0_1).
$$
Если, напротив, отношение количества блоков $C^0_1$ к количеству блоков $C^0_0$ равно ${m+\alpha},$ где ${m\in\mathbb{N}},~{0<\alpha<1},$ то можно увеличивающими преобразованиями добиться того, чтобы континуант состоял только из блоков
$$
C^1_0=(\underbrace{C^0_1,\ldots, C^0_1}_{m},C^0_0)~\text{и}~C^1_1=(\underbrace{C^0_1,\ldots, C^0_1}_{m+1},C^0_0).
$$
Если, наконец, отношение количества блоков равно $m\in\mathbb{N},$ то максимум достигается на периодическом континуанте, состоящим из блоков
$$
(C^0_1\underbrace{C^0_0,\ldots, C^0_0}_{m})~\text{или}~(\underbrace{C^0_1,\ldots, C^0_1}_{m},C^0_0)
$$ в зависимости от того, каких блоков больше - $C^0_0$ или же $C^0_1.$ 
\end{lem}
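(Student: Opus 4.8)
\emph{Plan.} The plan is to reduce the statement to a combinatorial problem about binary words and then solve it with reflections. By the reduction preceding the lemma we may assume $\langle A\rangle$ is a concatenation of the blocks $C^0_0=(a,b)$ and $C^0_1=(a+1,b)$; encode it by the word $w=w_1\ldots w_k$ over $\{0,1\}$ in which $w_i=0$ marks a $C^0_0$ and $w_i=1$ marks a $C^0_1$. If a reflection reverses the middle factor $Q$ running from the first entry of the $p$-th block to the first entry of the $q$-th block, then $Q$ has odd-indexed endpoints (so the reflection is admissible and keeps $n$ and $S_n$), and, after the partial quotients are re-grouped into blocks, its effect on $w$ is exactly to reverse the factor $w_p\ldots w_q$. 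When $w_p\ne w_q$, Lemma~\ref{vrab} says precisely when this is increasing: reading the block symbols leftward from $Q$, namely $w_{p-1},w_{p-2},\ldots$, and rightward from $Q$, namely $w_{q+1},w_{q+2},\ldots$ (a sequence running off an end of the continuant being continued by $+\infty$, per the remark after Lemma~\ref{vrab}), the reflection increases $\langle A\rangle$ iff at the first place where the two sequences disagree the larger value is on the left when $w_p=1,w_q=0$, and on the right when $w_p=0,w_q=1$; the increase is then strict by Lemma~\ref{oms}.

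\emph{The balancing move.} Suppose $w$ has two maximal runs of $0$'s of lengths $\ell<\ell'$ with $\ell'\ge\ell+2$, say the shorter to the left. Let $p$ be the position of the $1$ immediately after the shorter run and $q$ the position of the first $0$ of the longer run, and reverse $w_p\ldots w_q$. Then $w_p$ flips $1\to0$ and joins the shorter run, which becomes of length $\ell+1$; $w_q$ flips $0\to1$ and is chipped off the longer run, which becomes of length $\ell'-1$; the symbols strictly between $p$ and $q$ are merely permuted, so every other $0$-run keeps its length. Here $Q$ starts with the quotient $a+1$ and ends with $a$; reading leftward from $p$ one meets $a$ repeated $\ell$ times and then $a+1$ (or the end of the continuant), and reading rightward from $q$ one meets $a$ repeated $\ell'-1\ge\ell+1$ times and then $a+1$, so the first disagreement is at step $\ell+1$, where the left value $a+1$ (or $+\infty$) exceeds the right value $a$; by the criterion above the move is increasing. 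The same argument works with left and right swapped (invoking the other alternative of Lemma~\ref{vrab}) and with the roles of $0$ and $1$ exchanged; and a couple of reflections of exactly this shape, reversing a two-symbol factor $(1,0)$ or $(0,1)$ astride the junction of a run of $0$'s and a block of $1$'s, split an over-long run into two when — degenerately — there are too few runs for the balancing move to apply directly.

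\emph{Conclusion.} Performing such increasing reflections in succession makes $\langle A\rangle$ strictly grow, so the sequence terminates, $M_3(n,S_n)$ being finite. Suppose the ratio of $C^0_0$'s to $C^0_1$'s equals $m+\alpha$ with $0<\alpha<1$, and let $n_0,n_1$ be the numbers of $0$'s and $1$'s in $w$. A short count of $n_0$, $n_1$ and the number of $0$-runs shows that unless all maximal $0$-runs have length $m$ or $m+1$ there are two of them differing in length by at least $2$, or a run the splitting move can cut (if all runs had length $\le m-1$, or all $\ge m+2$, or all lengths lay in $\{m-1,m\}$ or in $\{m+1,m+2\}$, the ratio would be $\le m$ or $\ge m+1$). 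Hence the process does not stop before every interior $0$-run has length $m$ or $m+1$, i.e. the final word is a concatenation of $C^1_0=(\underbrace{C^0_0,\ldots,C^0_0}_{m},C^0_1)$ and $C^1_1=(\underbrace{C^0_0,\ldots,C^0_0}_{m+1},C^0_1)$, up to the $O(1)$ partial quotients forming its two extreme runs, which change $\langle A\rangle$ by at most a bounded factor. The case where $C^0_1$'s outnumber $C^0_0$'s is the same with $0$ and $1$ interchanged; and when the ratio is an integer $m$ the count forces every interior run to have length exactly $m$, so the final word is periodic with period $(C^0_1,\underbrace{C^0_0,\ldots,C^0_0}_{m})$ — or $(\underbrace{C^0_1,\ldots,C^0_1}_{m},C^0_0)$ if the $C^0_1$'s predominate — which is the claimed maximizing continuant.

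\emph{Main obstacle.} The genuinely delicate part will be the behaviour at the two ends of $w$: verifying the $+\infty$-convention bookkeeping of the criterion in every boundary case, and, when an extreme run itself has the wrong length, either repairing it with a bounded number of further reflections (which may fail to be increasing, but each loses at most a bounded factor) or treating it as part of the $O(1)$ boundary discrepancy — exactly as the two endpoint units are dealt with in the proof of Theorem~\ref{edvar}. Away from the ends everything is controlled by the single balancing move.
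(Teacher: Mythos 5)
Your proposal is correct and follows essentially the same route as the paper: the single balancing reflection that transfers one $C^0_0$ from an over-long run of $C^0_0$'s to a deficient one, justified by Kan's criterion in the form of Lemma~\ref{vrab} (comparing $[\overleftarrow{P}]$ and $[\overrightarrow{R}]$ at their first disagreement, with the $+\infty$ convention at the ends of the continuant), iterated until a counting argument forces every run to have length $m$ or $m+1$. The paper's proof is the same move written blockwise rather than on the binary word, with the same deferral of the endpoint cases to a bounded number of possibly non-increasing reflections whose total loss is controlled later (in Theorem~\ref{maxs}).
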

\begin{proof}
Докажем первое утверждение леммы. Пусть в континуанте встречается блок  $C_k=(C^0_1,\underbrace{C^0_0,\ldots, C^0_0}_{m-k},C^0_1),$ где $k>0.$ Поскольку отношение числа блоков $C^0_0$ и $C^0_1$ больше $m,$ то существует блок $C_t=(\underbrace{C^0_0,\ldots, C^0_0}_{m+t}),$ где ${ t>0}.$ Пусть блок $C_k$ встречается в континуанте раньше $C_t$. Тогда посмотрим на предпоследнее неполное частное блока $C_k,$ равное $a_i=a+1$ и первое неполное частное блока $C_t,$ равное $a_j=a.$ Континуант в этом случае имеет вид
$$
\langle\overbrace{\ldots,C_1^0,\underbrace{C^0_0,\ldots, C^0_0,}_{m-k}}^{\overrightarrow{P}}\underbrace{\overbrace{a+1}^{a_i},b,\ldots, \overbrace{a}^{a_j}}_{\overrightarrow{Q}},\overbrace{b,\underbrace{C^0_0,\ldots, C^0_0}_{m+t-1}\ldots}^{\overrightarrow{R}}\rangle
$$
Тогда по лемме \ref{vrab} отражение набора $Q$ увеличивает континуант, поскольку первый отличающийся блок за $a_i$ равен $C^0_1=(a+1, b)$ , а за  $a_ j$ - $C^0_0=(a ,b),$ что и требовалось доказать. Если же напротив $C_t$ идет раньше $C_k,$ то отражение $Q$
$$
\langle\overbrace{\ldots,\underbrace{C^0_0,\ldots, C^0_0}_{m+t-1}}^{\overrightarrow{P}}\underbrace{a, b,\ldots,a+1,}_{\overrightarrow{Q}}\overbrace{b,\underbrace{C^0_0,\ldots, C^0_0,}_{m-k}C_1^0,\ldots}^{\overrightarrow{R}}\rangle
$$
аналогично увеличивает континуант. Таким же образом поступаем, если существует блок $C_t=(\underbrace{C^0_0,\ldots, C^0_0}_{m+t+1}),~{ t>0}.$ В этом случае найдется блок ${C_k=(C^0_1,\underbrace{C^0_0,\ldots, C^0_0}_{m-k},C^0_1})$, $k\geqslant0,$ и аналогичная замена увеличит континуант.

Докажем теперь второе утверждение. Пусть  существет блок\\
$C_k=(C^0_0,\underbrace{C^0_1,\ldots, C^0_1}_{m-k},C^0_0)$, где $k>0.$ По тем же соображением найдется блок $C_t=(\underbrace{C^0_1,\ldots, C^0_1}_{m+t}), { t>0}.$ Аналогично предположим, что блок $C_k$ идет раньше $C_t.$ Тогда рассмотрим замену отражением $Q:$
$$
\langle\overbrace{\ldots,C_0^0,\underbrace{C^0_1,\ldots, C^0_1,}_{m-k}}^{\overrightarrow{P}}\underbrace{a,b,\ldots, a+1}_{\overrightarrow{Q}},\overbrace{b, \underbrace{C^0_1,\ldots, C^0_1,}_{m+t-1}\ldots}^{\overrightarrow{R}}\rangle
$$
При данной замене первое различие цепных дробей $[\overleftarrow{P}]$ и $[\overrightarrow{Q}]$ будет в $m-k+1-$ом нечетном неполном частном. Поскольку у $[\overleftarrow{P}]$ оно меньше, то отражение $Q$, которое начинается с $a$ и заканчивается на $a+1$ увеличивает континуант. Вторая часть доказывается аналогично.

Докажем третье утверждение леммы. Пусть доля $C_0^0$ больше. Если существует блок $C_k$, в котором идут менее $m-1$ раз подряд идет $C_0^0$, то существует  блок, в котором идут менее $m-1$ раз подряд идет $C_1^0$, а значит, аналогично первым двум частям первым двум частям, если $C_k$ не является началом или концом континуанта, существет увеличивающая замена. Рассмотрим теперь концы континуанта. Из леммы \ref{mslem} сразу следует, что континуант должен начинаться с $C_1^0$ и заканчиваться на $C^0_0.$ Пусть есть нарушение блоковой структуры в начале, то есть континуант имеет вид
$$\langle C^0_1,\underbrace{C^0_0,\ldots,C^0_0}_{m-k},C^0_1,\ldots, \underbrace{C^0_0,\ldots, C^0_0}_{m+t},\ldots\rangle,~\text{где}~k>0,t>0$$

В этом случае, очевидно, работает тот же самые прием, что и в первой части. Если же нарушение в конце, то есть
$$\langle\ldots, \underbrace{C^0_0,\ldots, C^0_0}_{m+t},\ldots,C^0_1,\underbrace{C^0_0,\ldots,C^0_0}_{m-k}\rangle,~ k>0,t>0,$$ то отражение $Q$
$$\langle\overbrace{\ldots, \underbrace{C^0_0,\ldots, C^0_0}_{m+t-1}}^{\overrightarrow{P}},\underbrace{a,b,\ldots,a+1}_{\overrightarrow{Q}},\overbrace{b,\underbrace{C^0_0,\ldots,C^0_0}_{m-k}}^{\overrightarrow{R}}\rangle,~\text{где}~ k>0,~t>0,$$ увеличивает континуант, поскольку первое отличающееся неполное частное $[\overleftarrow{P}]$ и $[\overrightarrow{R}]$ у  $[\overrightarrow{R}]$ равно $+\infty$ и имеет четный индекс. Что и требовалось доказать.
\end{proof}
{\itshape Замечание.}~
Нетрудно доказать, что периодический континуант, составленный из блоков
$(C^0_1\underbrace{C^0_0,\ldots, C^0_0}_{m}),$ не более, чем в $2$ раза отличается от континуанта той же длины, составленного из блоков  $(\underbrace{C^0_0,\ldots, C^0_0}_{m},C^0_1).$

Приведем теперь индуктивное обобщение доказанной леммы. Для этого дадим индуктивное определение блоковой структуры i-го уровня.\\
1) Континуант имеет блоковую структуру $0$-го уровня, если его последовательность неполных частных можно представить в виде последовательности блоков $C^0_0$ и $ C^0_1.$ Как было сказано выше, мы без ограничения общности считаем, что любой континуант из $M_3(n, s_n)$ имеет блоковую структуру $0-$го уровня.\\
2) Если континуант имеет блоковую структуру $k-$го уровня, то есть представим в виде
$$
\langle C^k_{i_1},C^k_{i_2},\ldots,C^k_{i_m}\rangle, i_l\in\{0,1\}
$$
и при этом его также можно представить в виде
\begin{equation}
\label{jl}
\langle C^{k+1}_{j_1},C^{k+1}_{j_2},\ldots,C^{k+1}_{j_n}\rangle, j_l\in\{0,1\},
\end{equation}
где
$$
C^{k+1}_0=(\underbrace{C^k_{b},\ldots, C^k_{b}}_{n},C^k_{s}),\quad C^{k+1}_1=(\underbrace{C^k_{b},\ldots, C^k_{b}}_{n+1},C^k_{s}),~~b+s=1,
$$
то такое представление назовем блоковой структурой  $k+1-$го уровня. Блок $C^k_b$ мы назовем доминирующим блоком, а парный ему блок $C^l_s$~- доминируемым. Назовем блоковую структуру $k+1-$го уровня вырожденной, если все $j_l$ в (\ref{jl}) одновременно равны между собой, и невырожденной в противном случае. Лемму \ref{simlem} можно с помощью новых определений сформулировать в следующем, более кратком виде:\\
\textit{\textbf{(i)}~Если континуант имеет невырожденную блоковую структуру $0-$го уровня, то его можно при помощи увеличивающих преобразований перевести в континуант, имеющий блоковую структуру $1$-го уровня.\\
\textbf{(ii)}~Если в континуанте блоковая структура $1$-го уровня вырождена, то данный континуант является максимумом по множеству $M_3(n, S_n)$ с точностью до некоторой, не зависящей от $n$ константы.}

Итак, пусть континуант состоит из блоков $k-$го уровня $C^k_0$ и $C^k_1$. Тогда проведем индуктивный переход к блокам $k+1-$го уровня:

\begin{theor}[Рекурсивный алгоритм поиска максимума]
\label{maxs}
\quad\\
\textbf{(i)}~Если континуант имеет невырожденную блоковую структуру $k-$го уровня, то его можно при помощи увеличивающих преобразований перевести в континуант, имеющий блоковую структуру $k+1$-го уровня.\\ 
\textbf{(ii)}Если в континуанте блоковая структура $k+1$-го уровня вырождена, то данный континуант отличается от максимума по множеству $M_3(n, S_n)$ не более чем в $8$ раз.
\end{theor}
Доказательству утверждения \textbf{(i)} предпошлем ряд вспомогательных лемм и следствий.
Прежде всего, изучим более подробно структуру блоков $k+1-$го уровня.
Обозначим $$C^k_{tail}=(b,C^0_s,C^1_s,\ldots, C^{k-1}_s).$$ Легко видеть, что ${C^k_{tail}=(C^{k-1}_{tail},C^{k-1}_s.})$ Назовем $C^k_{tail}$ \textit{хвостом $k$-го уровня}.
Введем еще одно обозначение: пусть последовательность неполных частных $X$ представима в виде $X=(A, B).$ Тогда в качестве $X\smallsetminus B$ мы будем обозначать $A$. Если же данное представление $X$ не имеет места, то обозначение $X\smallsetminus B$ некорректно.
\begin{lem}[Лемма о существовании хвоста]
Любой блок $k-$го уровня $C^k_i$ представим в виде $(C^k_i\smallsetminus C^k_{tail}, C^k_{tail}),$ $i\in\{0,1\}.$
\end{lem}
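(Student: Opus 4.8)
The plan is to prove the lemma by induction on the level $k$, using nothing more than the recursive description of the tail, $C^k_{tail}=(C^{k-1}_{tail},C^{k-1}_s)$, together with the recursive description of the blocks of level $k+1$ in terms of the dominating and dominated blocks $C^k_b,C^k_s$ of level $k$. For the base case $k=0$ I would note that in the defining list $C^k_{tail}=(b,C^0_s,C^1_s,\ldots,C^{k-1}_s)$ the list of dominated blocks is empty, so $C^0_{tail}=(b)$; since $C^0_0=(a,b)$ and $C^0_1=(a+1,b)$, each level-$0$ block ends with the one-term sequence $(b)=C^0_{tail}$, hence $C^0_i\smallsetminus C^0_{tail}$ is well defined (it equals $(a)$ or $(a+1)$) and $C^0_i=(C^0_i\smallsetminus C^0_{tail},C^0_{tail})$.

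For the inductive step, assume every level-$k$ block satisfies $C^k_i=(C^k_i\smallsetminus C^k_{tail},C^k_{tail})$ for $i\in\{0,1\}$. By the formation rule for level $k+1$ we have $C^{k+1}_i=(\underbrace{C^k_b,\ldots,C^k_b}_{n+i},C^k_s)$ with $n\ge 1$ (the dominating block occurs at least once in each block of the next level, by the way blocks are built in Lemma~\ref{simlem}); in particular $C^{k+1}_i$ terminates with the two consecutive level-$k$ blocks $C^k_b$ followed by $C^k_s$. Applying the inductive hypothesis to that last copy of $C^k_b$ rewrites the end of $C^{k+1}_i$ as $\ldots,(C^k_b\smallsetminus C^k_{tail}),C^k_{tail},C^k_s$, and since $C^{k+1}_{tail}=(C^k_{tail},C^k_s)$ (the tail recursion $C^k_{tail}=(C^{k-1}_{tail},C^{k-1}_s)$ with $k$ replaced by $k+1$), this exhibits $C^{k+1}_i$ as ending in $C^{k+1}_{tail}$. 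That is exactly the statement that $C^{k+1}_i\smallsetminus C^{k+1}_{tail}$ is well defined and $C^{k+1}_i=(C^{k+1}_i\smallsetminus C^{k+1}_{tail},C^{k+1}_{tail})$, which closes the induction.

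The argument is essentially bookkeeping, and the single point that deserves care is the inequality $n\ge 1$, i.e.\ that a block of level $k+1$ is never an isolated dominated block $C^k_s$: if that were allowed the claim would already fail at that level, since the shorter sequence $C^k_s$ cannot end with $(C^k_{tail},C^k_s)$. This is guaranteed by the formation rule in Lemma~\ref{simlem}, where the majority (dominating) block always appears, and at least as often as the minority one. The only other thing to keep straight is that $X\smallsetminus B$ denotes the unambiguous ``delete the suffix $B$'' operation, so that the suffix rewriting carried out in the inductive step is legitimate.
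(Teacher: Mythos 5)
Your proof is correct and follows essentially the same route as the paper's: induction on the level $k$, with the base case $C^0_{tail}=(b)$ and the inductive step rewriting the final copy of $C^k_b$ in $C^{k+1}_i=(\underbrace{C^k_b,\ldots,C^k_b}_{n+i},C^k_s)$ via the inductive hypothesis so that the suffix $(C^k_{tail},C^k_s)=C^{k+1}_{tail}$ appears. Your explicit remark that $n\geqslant1$ (so a level-$(k+1)$ block always contains at least one dominating block before the dominated one) is a small point the paper leaves implicit, but it does not change the argument.
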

\begin{proof}
Утверждение несложно доказывается по индукции. Для $k=0$ оно очевидно, $C^k_{tail}$ в этом случае равно $b$. Пусть утверждение верно для всех $k<l+1$, тогда, пользуясь предположением индукции, получаем:
\begin{equation}
C^{l+1}_0=(\underbrace{C^l_b,\ldots, C^l_b}_{n},C^l_{s})=(\underbrace{C^l_b,\ldots, C^l_b}_{n-1},C^l_b\smallsetminus C^l_{tail}, \underbrace{C^l_{tail},C^l_{s}}_{C^{l+1}_{tail}}).
\end{equation}
Аналогично для $C^{l+1}_1.$
\end{proof}
\begin{lem}[Основная лемма о хвосте]
\label{mtail}
$\overleftarrow{C^k_i}=({C^k_{tail},C^k_i\smallsetminus C^k_{tail}}).$
\end{lem}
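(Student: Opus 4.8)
The plan is to argue by induction on the level $k$, using the recursion that defines $C^{k+1}_i$ and $C^{k+1}_{tail}$ together with the preceding ``lemma on the existence of a tail'', which guarantees $C^k_j=(C^k_j\smallsetminus C^k_{tail},\,C^k_{tail})$ for $j\in\{0,1\}$; this is exactly what makes the regroupings below legitimate. For $k=0$ the statement is trivial: $C^0_{tail}=(b)$ and $C^0_i\smallsetminus C^0_{tail}$ is the single odd partial quotient, so $\overleftarrow{C^0_i}=(C^0_{tail},C^0_i\smallsetminus C^0_{tail})$ merely records that reversing a block of length $2$ swaps its two entries.

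For the inductive step assume the identity at level $k$ for both $C^k_0$ and $C^k_1$, and write $C^{k+1}_0=(\underbrace{C^k_b,\ldots,C^k_b}_{n},C^k_s)$ with $b+s=1$; the case of $C^{k+1}_1$ is identical with $n$ replaced by $n+1$. Since reversing a concatenation reverses the order of the constituent pieces and reverses each of them,
$$\overleftarrow{C^{k+1}_0}=(\overleftarrow{C^k_s},\underbrace{\overleftarrow{C^k_b},\ldots,\overleftarrow{C^k_b}}_{n}).$$
Applying the inductive hypothesis to each reversed block turns this into
$$(C^k_{tail},\,C^k_s\smallsetminus C^k_{tail},\,\underbrace{C^k_{tail},\,C^k_b\smallsetminus C^k_{tail},\ \ldots\ ,\ C^k_{tail},\,C^k_b\smallsetminus C^k_{tail}}_{n\text{ pairs}}).$$
Now re-associate: keep the leading $C^k_{tail}$ in front; the pair $(C^k_s\smallsetminus C^k_{tail},\,C^k_{tail})$ glues back to $C^k_s$ by the existence-of-tail lemma, each further pair $(C^k_b\smallsetminus C^k_{tail},\,C^k_{tail})$ glues back to $C^k_b$, and one copy of $C^k_b\smallsetminus C^k_{tail}$ is left at the very end, so
$$\overleftarrow{C^{k+1}_0}=(C^k_{tail},\,C^k_s,\,\underbrace{C^k_b,\ldots,C^k_b}_{n-1},\,C^k_b\smallsetminus C^k_{tail}).$$
On the other hand $C^{k+1}_{tail}=(C^k_{tail},C^k_s)$, and the decomposition exhibited in the proof of the existence-of-tail lemma gives $C^{k+1}_0\smallsetminus C^{k+1}_{tail}=(\underbrace{C^k_b,\ldots,C^k_b}_{n-1},\,C^k_b\smallsetminus C^k_{tail})$; concatenating these two sequences reproduces the displayed right-hand side exactly, i.e. $\overleftarrow{C^{k+1}_0}=(C^{k+1}_{tail},\,C^{k+1}_0\smallsetminus C^{k+1}_{tail})$, which closes the induction.

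There is no conceptual obstacle: the entire content is the bookkeeping of the concatenations and the re-association $(X\smallsetminus C_{tail},\,C_{tail})=X$. The one point deserving a word of care is that the leftover piece $C^k_b\smallsetminus C^k_{tail}$ is always nonempty, i.e. $|C^k_b|>|C^k_{tail}|$, so that the decomposition $C^{k+1}_i=(C^{k+1}_i\smallsetminus C^{k+1}_{tail},\,C^{k+1}_{tail})$ invoked above is genuine; this follows by a one-line induction on lengths, since the surplus equals $2-1=1$ at level $0$ and, because $|C^{k+1}_b|\ge|C^k_b|+|C^k_s|$ while $|C^{k+1}_{tail}|=|C^k_{tail}|+|C^k_s|$, it never decreases.
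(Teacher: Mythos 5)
Your proof is correct and follows essentially the same route as the paper: induction on the level $k$, reversing the concatenation, applying the inductive hypothesis blockwise, and regrouping via the identity $C^k_j=(C^k_j\smallsetminus C^k_{tail},C^k_{tail})$ from the existence-of-tail lemma. Your added remark that the leftover piece $C^k_b\smallsetminus C^k_{tail}$ is nonempty is a small point the paper leaves implicit, but it does not change the argument.
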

\begin{proof}
Докажем по индукции. Для $k=0$ очевидно, что $$\overleftarrow{C^0_0}=(b,C^0_0\smallsetminus b),~~ \overleftarrow{C^0_1}=(b,C^0_1\smallsetminus b).$$
Пусть утверждение верно для всех $k\leqslant l$; рассмотрим $k=l+1:$
$$
C^{l+1}_0=(\underbrace{C^l_{b},\ldots, C^l_{b}}_{n},C^l_{s}), \quad \overleftarrow{C^{l+1}_0}=(\overleftarrow{C^l_s},\underbrace{\overleftarrow{C^l_{b}},\ldots, \overleftarrow{C^l_{b}}}_{n}).
$$
Пользуясь предположением индукции, получаем:
\begin{multline}
\overleftarrow{C^{l+1}_0}=({\overbrace{C^l_{tail},C^l_s\smallsetminus C^l_{tail}}^{\overleftarrow{C^l_s}}},\underbrace{\overbrace{C^l_{tail},{C^l_b\smallsetminus C^l_{tail}}}^{\overleftarrow{C^l_b}},\ldots,\overbrace{C^l_{tail},{C^l_b\smallsetminus C^l_{tail}}}^{\overleftarrow{C^l_b}}}_{n})=\\=(C^l_{tail},C^l_s,\underbrace{C^l_b,\ldots, C^l_b,C^l_b\smallsetminus C^l_{tail}}_{n})=(C^{l+1}_{tail},\underbrace{C^l_b,\ldots, C^l_b,C^l_b\smallsetminus C^l_{tail}}_{n})=\\=(C^{l+1}_{tail},{C^{l+1}_0\smallsetminus C^{l+1}_{tail}}).
\end{multline}
Абсолютно аналогично утверждение доказывется для $C^{l+1}_1.$ Лемма доказана.
\end{proof}
Выведем три простых следствия доказанной леммы.
\begin{foll}
\label{folltail}
$\overleftarrow{C^k_i\smallsetminus C^k_{tail}}=C^k_i\smallsetminus C^k_{tail}, \quad \overleftarrow{C^k_{tail}}=C^k_{tail}.$
\end{foll}
\begin{proof}
По предыдущей лемме имеем:
$$
\overleftarrow{C^k_i}=(C^k_{tail}, C^k_i\smallsetminus C^k_{tail}).
$$
С другой стороны,
$$
\overleftarrow{C^k_i}=\overleftarrow{(C^k_i\smallsetminus C^k_{tail},C^k_{tail})}=(\overleftarrow{C^k_{tail}}, \overleftarrow{C^k_i\smallsetminus C^k_{tail}}).
$$
Что и требовалось доказать.
\end{proof}
\begin{lem}[Обобщенная лемма о хвосте]
\label{bigtail}
$$(\overleftarrow{C^k_{i_1},C^k_{i_2},\ldots, C^k_{i_n}})=(C^k_{tail},C^k_{i_n},\ldots,C^k_{i_2},C^k_{i_1}\smallsetminus C^k_{tail}),~~i_j\in\{0,1\}.$$
\end{lem}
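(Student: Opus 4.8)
The plan is to prove this identity by induction on the number $n$ of level-$k$ blocks, with the Main Tail Lemma (Lemma~\ref{mtail}) doing all the real work; the only other ingredient is the trivial fact that reversing a concatenation of sequences reverses both the order of the pieces and each individual piece. For $n=1$ the claim $\overleftarrow{C^k_{i_1}}=(C^k_{tail},C^k_{i_1}\smallsetminus C^k_{tail})$ is precisely Lemma~\ref{mtail}, so the base case is already available.

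For the inductive step I would split the word by peeling off the first block, $(C^k_{i_1},C^k_{i_2},\ldots,C^k_{i_n})=(C^k_{i_1},\,(C^k_{i_2},\ldots,C^k_{i_n}))$, and reverse, obtaining
$$\overleftarrow{(C^k_{i_1},\ldots,C^k_{i_n})}=(\overleftarrow{(C^k_{i_2},\ldots,C^k_{i_n})},\ \overleftarrow{C^k_{i_1}}).$$
Applying the induction hypothesis to the first factor and Lemma~\ref{mtail} to the second gives
$$(C^k_{tail},C^k_{i_n},\ldots,C^k_{i_3},\,C^k_{i_2}\smallsetminus C^k_{tail},\,C^k_{tail},\,C^k_{i_1}\smallsetminus C^k_{tail}).$$
The one step that actually does something is to glue the middle pair $(C^k_{i_2}\smallsetminus C^k_{tail},C^k_{tail})$ back into the single block $C^k_{i_2}$; this is exactly the content of the lemma on the existence of the tail (the identity $C^k_i=(C^k_i\smallsetminus C^k_{tail},C^k_{tail})$). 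After this regluing the word collapses to $(C^k_{tail},C^k_{i_n},\ldots,C^k_{i_2},C^k_{i_1}\smallsetminus C^k_{tail})$, which is the asserted right-hand side, completing the induction.

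There is no genuine analytic obstacle here — this is a purely combinatorial identity about words, and everything nontrivial is absorbed into Lemma~\ref{mtail}. The only point deserving mild care is the well-definedness of the operation $\smallsetminus$ at each step: one must check that $C^k_{tail}$ is indeed a suffix of every block $C^k_{i_j}$ (so that $C^k_{i_j}\smallsetminus C^k_{tail}$ is meaningful) and may legitimately be reattached, both of which follow from the lemma on the existence of the tail together with Corollary~\ref{folltail} (in particular $\overleftarrow{C^k_{tail}}=C^k_{tail}$ and $\overleftarrow{C^k_i\smallsetminus C^k_{tail}}=C^k_i\smallsetminus C^k_{tail}$). Peeling off the last block instead of the first yields a symmetric and equally short argument.
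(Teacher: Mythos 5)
Your proof is correct and is essentially the paper's own argument: the paper also reverses the concatenation, applies Lemma~\ref{mtail} to every block, and reglues each pair $(C^k_{i_j}\smallsetminus C^k_{tail},\,C^k_{tail})$ back into $C^k_{i_j}$, only doing all $n$ blocks at once rather than packaging the computation as an induction on $n$. The ingredients and the regluing step are identical, so the inductive framing is just bookkeeping.
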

\begin{proof}
Очевидно, что
$$
(\overleftarrow{C^k_{i_1},C^k_{i_2},\ldots, C^k_{i_n}})=(\overleftarrow{C^k_{i_n}},\ldots, \overleftarrow{C^k_{i_2}},\overleftarrow{C^k_{i_1}}).
$$
Применяя лемму \ref{mtail} к каждому из блоков $C^k_{i_j}$, получаем
\begin{multline}
(\overleftarrow{C^k_{i_n}},\ldots, \overleftarrow{C^k_{i_2}},\overleftarrow{C^k_{i_1}})=(C^k_{tail},C^k_{i_n}\smallsetminus C^k_{tail},\ldots, C^k_{tail},C^k_{i_2}\smallsetminus C^k_{tail}, C^k_{tail},C^k_{i_1}\smallsetminus C^k_{tail})=\\=(C^k_{tail},C^k_{i_n},\ldots,C^k_{i_2},C^k_{i_1}\smallsetminus C^k_{tail}).
\end{multline}
\end{proof}
\begin{foll}
\label{bigtail2}
$$(\overleftarrow{C^k_{i_1},C^k_{i_2},\ldots, C^k_{i_n}\smallsetminus C^k_{tail}})=(C^k_{i_n},\ldots,C^k_{i_2},C^k_{i_1}\smallsetminus C^k_{tail}),~~i_j\in\{0,1\}.$$
\end{foll}
\begin{proof}
Утверждение очевидно следует из леммы \ref{bigtail} и следствия \ref{folltail}.
\end{proof}
\begin{lem}
\label{simq}
Если в континуанте встречаются последовательности неполных частных $$(C_1^k,\underbrace{C^k_0,\ldots, C^k_0}_{m-k},C_1^k)$$ и $$(\underbrace{C^k_0,\ldots, C^k_0}_{m+t})$$ для каких-то натуральных $k$ и $t,$ то для данного континуанта существует абсолютно увеличивающая замена отражением. 
\end{lem}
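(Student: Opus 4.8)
The plan is to reproduce, one level up, the proof of the first assertion of Lemma~\ref{simlem}: the elementary blocks $C^0_0,C^0_1$ are replaced throughout by the level-$k$ blocks $C^k_0,C^k_1$, and the only genuinely new ingredient is that a reversed string of level-$k$ blocks must first be brought to a normal form by the tail lemmas. Since one of the two displayed strings lies to the left of the other, I split into two symmetric cases; take, say, the case in which the string consisting of $C^k_1$, a run of $C^k_0$ of length $m-k$, and another $C^k_1$ occurs before the run of $C^k_0$ of length $m+t$. As in Lemma~\ref{simlem} I choose a reflection window $\langle\overrightarrow P,\overrightarrow Q,\overrightarrow R\rangle\to\langle\overrightarrow P,\overleftarrow Q,\overrightarrow R\rangle$: here $\overrightarrow P$ ends with the run of $C^k_0$ of length $m-k$ preceded by the first $C^k_1$; $\overrightarrow Q$ starts at the left end of the second $C^k_1$, runs through whatever lies between the two strings, and ends just before the tail $C^k_{tail}$ of the first $C^k_0$ of the long run; and $\overrightarrow R$ begins with that $C^k_{tail}$, then the remaining $m+t-1$ copies of $C^k_0$, then the rest of $\langle A\rangle$ (again a word in $C^k_0,C^k_1$). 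As at level $0$, the window is arranged so that both $\overleftarrow P$ and $\overrightarrow R$ begin with $C^k_{tail}$.

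To conclude one applies the comparison criterion of Lemma~\ref{oms} (used in Lemma~\ref{simlem} in the sharper form of Lemma~\ref{vrab}) to this window, and this is exactly where the tail lemmas are needed. By Lemma~\ref{mtail}, $\overleftarrow{C^k_i}=(C^k_{tail},C^k_i\smallsetminus C^k_{tail})$; by the generalised tail lemma~\ref{bigtail} together with Corollary~\ref{bigtail2}, a reversed string of level-$k$ blocks has the normal form $\overleftarrow{(C^k_{i_1},\ldots,C^k_{i_n})}=(C^k_{tail},C^k_{i_n},\ldots,C^k_{i_2},C^k_{i_1}\smallsetminus C^k_{tail})$ — the tail, then the blocks in reverse order, then the last of them with its tail stripped. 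Hence $[\overleftarrow P]$ is the continued fraction built from $C^k_{tail}$ followed by the blocks of $P$ in reverse order, and comparing it with $[\overrightarrow R]$ — which also starts with $C^k_{tail}$ and then forward copies of $C^k_0$ — becomes a block-level comparison of exactly the kind performed in Lemma~\ref{simlem}. The two continued fractions agree through $C^k_{tail}$ and through $m-k$ further copies of $C^k_0$; since $m+t-1>m-k$ and since $C^k_1$ is $C^k_0$ with one extra dominating sub-block $C^{k-1}_b$ prepended, the next block of $[\overleftarrow P]$ is $C^k_1$ while the next block of $[\overrightarrow R]$ is still $C^k_0$, and this is the first disagreement. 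That fixes the sign of $[\overleftarrow P]-[\overrightarrow R]$; the sign of $[\overleftarrow Q]-[\overrightarrow Q]$ is read off in the same way from the leading sub-blocks of $Q$ and $\overleftarrow Q$. One then checks that the product $([\overleftarrow P]-[\overrightarrow R])([\overleftarrow Q]-[\overrightarrow Q])$ has the sign for which Lemma~\ref{oms} guarantees that the reflection strictly increases $\langle A\rangle$; being a reflection, it changes the continuant by at most a factor of $2$, hence is an increasing transformation of the required type. The opposite ordering of the two strings, and the variant in which the long run is one block longer, are disposed of by the mirror choice of $\overrightarrow Q$, exactly as the two displayed formulas and the closing remark in the proof of Lemma~\ref{simlem}.

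The step I expect to be the main obstacle is making that last sign computation watertight: in Lemma~\ref{oms} the sign of each factor depends on the parity of the index at which the two continued fractions first differ, combined with the direction of the difference (a larger partial quotient at an even position lowers a continued fraction, at an odd position raises it), and because $C^k_0$ and $C^k_1$ share a long common prefix one may have to unfold the discrepancy between them through the level-$(k-1)$ sub-blocks down to the elementary blocks $(a,b)$ and $(a+1,b)$ before it becomes a single differing partial quotient. What keeps this tractable is the uniform tail structure: every level-$k$ block ends with the same tail $C^k_{tail}$, and all blocks are concatenations of the length-two blocks $(a,b),(a+1,b)$, so that every block boundary sits at an even position and the reversed words produced by Lemma~\ref{bigtail} align with the forward words block-by-block in a parity-consistent fashion. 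Consequently the parity of the first disagreement is determined purely by the number of coinciding blocks, and once the block-level comparison above is in place, translating it into the sign demanded by Lemma~\ref{oms} is routine; the lemma follows.
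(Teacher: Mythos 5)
Your proof follows the paper's argument essentially verbatim: the same reflection window $\langle\overrightarrow{P},\overrightarrow{Q},\overrightarrow{R}\rangle$ with $\overrightarrow{Q}$ running from the second $C^k_1$ to the tail-stripped first $C^k_0$ of the long run, the same use of Lemma~\ref{mtail}, Lemma~\ref{bigtail} and Corollary~\ref{bigtail2} to show that $[\overleftarrow{P}]$ and $[\overrightarrow{R}]$ (and likewise $[\overrightarrow{Q}]$ and $[\overleftarrow{Q}]$) first differ exactly at the first differing partial quotient of $[C^k_1]$ and $[C^k_0]$, and the same appeal to Lemma~\ref{oms}. The sign verification you single out as the delicate point is dispatched in the paper by precisely the observation you make --- both factors are governed by one and the same elementary discrepancy between $C^k_1$ and $C^k_0$, so the product $([\overleftarrow{P}]-[\overrightarrow{R}])([\overrightarrow{Q}]-[\overleftarrow{Q}])$ has a determined sign and the reflection increases the continuant.
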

\begin{proof}
Рассмотрим следующее разбиение континуанта:
\begin{equation}
\label{mainotr}
\langle\overbrace{\ldots,C_1^k,\underbrace{C^k_0,\ldots, C^k_0,}_{m-k}}^{\overrightarrow{P}}\overbrace{C^k_1,\ldots, C^k_0\smallsetminus C^k_{tail}}^{\overrightarrow{Q}},\overbrace{ C^k_{tail},\underbrace{C^k_0,\ldots, C^k_0}_{m+t-1},\ldots}^{\overrightarrow{R}}\rangle.
\end{equation}
Заметим, что первое отличающееся неполное частное в цепных дробях $[\overleftarrow{P}]$ и $[\overrightarrow{R}]$ есть первое отличающееся неполное частное дробей $[C^k_1]$ и $[C^k_0]$. Действительно, 
$$[\overrightarrow{R}]=[ C^k_{tail},\underbrace{C^k_0,\ldots, C^k_0}_{m+t-1},\ldots].$$
С другой стороны, по лемме \ref{bigtail}  
$$[\overleftarrow{P}]=[C^k_{tail},\underbrace{C^k_0,\ldots, C^k_0}_{m-k},C^k_1,\ldots],$$ 
что и требовалось доказать.

Рассмотрим $Q=(q_1,\ldots,q_l),$ найдем такое минимальное $i$, что\\ $q_i\ne q_{l+1-i}$. Для этого сравним $\overrightarrow{Q}$ и $\overleftarrow{Q}$. По следствию \ref{bigtail2}
$$
\overleftarrow{Q}=(\overleftarrow{C^k_0\smallsetminus C^k_{tail}}\overleftarrow{C^k_0}\ldots),
$$
что по лемме \ref{bigtail} и следствию \ref{folltail} равно
$$
(C^k_0\smallsetminus C^k_{tail},C^k_{tail},{C^k_i\smallsetminus C^k_{tail}}\ldots)=(C^k_0\ldots).$$
Таким образом, поскольку $\overrightarrow{Q}=(C^1_k\ldots),$ получаем, что искомое $i$ есть первое отличающееся неполное частное цепных дробей $[C^k_1]$ и $[C^k_0].$ Следовательно выражения $([\overleftarrow{P}] - [\overrightarrow{R}])$ и $([\overrightarrow{Q}] - [\overleftarrow{Q}])$ имеют одинаковый знак, то есть
$$
([\overleftarrow{P}] - [\overrightarrow{R}])([\overrightarrow{Q}] - [\overleftarrow{Q}])>0.
$$
Отсюда по лемме \ref{oms} отражение $\overrightarrow{Q}$ в формуле (\ref{mainotr}) увеличивает континуант, что и требовалось доказать.
\end{proof}
{\itshape Замечание.}
Нетрудно видеть, что в результате замены континуант примет вид:
\begin{equation}
\begin{split}
\langle\overbrace{\ldots,C_1^k,\underbrace{C^k_0,\ldots, C^k_0,}_{m-k}}^{\overrightarrow{P}}\overbrace{C^k_0,\ldots, C^k_1\smallsetminus C^k_{tail}}^{\overleftarrow{Q}},\overbrace{ C^k_{tail},\underbrace{C^k_0,\ldots, C^k_0}_{m+t-1}\ldots}^{\overrightarrow{R}}\rangle=\\=\langle\overbrace{\ldots,C_1^k,\underbrace{C^k_0,\ldots, C^k_0,}_{m-k}}^{\overrightarrow{P}}\overbrace{C^k_0,\ldots, C^k_1}^{\overleftarrow{Q}},\overbrace{\underbrace{C^k_0,\ldots, C^k_0,}_{m+t-1}\ldots}^{\overrightarrow{R}}\rangle
\end{split}
\end{equation}
и будет также иметь блоковую структуру $k-$го уровня.
\begin{foll}
\label{ozamene}
Если континуант состоит из блоков $C^k_0$ и $C^k_1$ т.е. имеет вид:
\begin{equation}
\label{81}
\langle X, C^k_0, Y, C^k_1, Z\rangle,
\end{equation}
то существует замена отражением, в результате которой континуант примет вид:
$$
\langle X, C^k_1, Y', C^k_0, Z\rangle
$$
и будет также иметь блоковую структуру $k-$го уровня.
\end{foll}
\begin{proof}
Пусть континуант имеет вид (\ref{81}) Произведем следующее отражение $Q$:
$$
\langle X, \overbrace{C^k_0, Y, C^k_1\smallsetminus C^k_{tail}}^{\overrightarrow{Q}},C^k_{tail}, Z\rangle.
$$
Из вышедоказанного следует, что континуант в результате отражения примет вид:
$$
\langle X, \overbrace{C^k_1, Y', C^k_0\smallsetminus C^k_{tail}}^{\overleftarrow{Q}},C^k_{tail}, Z\rangle=\langle X, C^k_1, Y', C^k_0, Z\rangle,
$$
где $Y'$ состоит из блоков $C^k_i.$ Что и требовалось доказать.
\end{proof}
Докажем теперь теорему \ref{maxs}.
\begin{proof} Докажем утверждение \textbf{(i)}.\\
Если континуант не имеет блоковой структуры $k+1-$го уровня, то это означает одну из трех возможных ситуаций:\\
1)  В континуанте встречаются последовательности неполных частных $(C_1^k,\underbrace{C^k_0,\ldots, C^k_0}_{m-k},C_1^k)$ и $(\underbrace{C^k_0,\ldots, C^k_0}_{m+t})$ для каких-то натуральных $k$ и $t.$ В этом случае, как показано в лемме \ref{simq}, существует абсолютно увеличивающая замена отражением. Это означает, что произвольный континуант $\langle A\rangle$ можно при помощи увеличивающих отражений перевести в некоторый континуант $\langle A'\rangle$, в котором не реализуется ситуация 1)\\
2)Континуант начинается с блока $C^1_k$ или заканчивается на блок $C^k_0.$\\
3)Континуант имеет вид
$$
\langle\underbrace{C^k_0,\ldots, C^k_0}_{m-k},\ldots, \underbrace{C^k_0,\ldots, C^k_0}_{m+t},\ldots\rangle~\text{или}~
\langle\ldots,\underbrace{C^k_0,\ldots, C^k_0}_{m+t},\ldots, \underbrace{C^k_0,\ldots, C^k_0}_{m-k}\rangle
$$
для каких-то  натуральных $k$ и $t.$

Разберем 2) и 3). Покажем, что в этих случаях $\langle A'\rangle$ можно при помощи отражений перевести в континуант, имеющий блоковую структуру $k+1-$го уровня. Обозначим за $len(k)$ длину (т.е. количество неполных частных) блока $C^k_0$. Очевидно, $len(k)\geqslant2^{k+1}.$

2)~Пусть континуант $\langle A\rangle$ начинается с блока $C^k_1$.  Выберем в $\langle A\rangle$ произвольный блок $C^k_0$.
$$
\langle A\rangle=\langle C^k_1,\ldots, C^k_0,\ldots \rangle.
$$
Тогда по следствию \ref{ozamene} существует отражение $Q$
$$
\langle\overbrace{C^k_1, \ldots, C^k_0\smallsetminus C^k_{tail}}^{\overrightarrow{Q}},C^k_{tail},\ldots\rangle,
$$
превращающее континуант в 
$$
\langle A'\rangle=\langle C^k_0,\ldots, C^k_1,\ldots\rangle.
$$
Данное отражение может быть уменьшающим. Оценим, во сколько раз оно может уменьшить $\langle A\rangle.$ Из формулы (\ref{comparecont}) следует, что:
$$
\left|\frac{\langle A\rangle-\langle A'\rangle}{\langle A\rangle}\right|<|[\overleftarrow{Q}]-[\overrightarrow{Q}]|.
$$
Оценим по модулю разность $[\overleftarrow{Q}]-[\overrightarrow{Q}]$. Как уже было показано, $[\overleftarrow{Q}]$ имеет вид:
$$
[C^k_0\ldots]=[C^{k-1}_b,\ldots, C^{k-1}_b,C^{k-1}_s\ldots].
$$
Аналогично
$$
[\overrightarrow{Q}]=[C^k_1\ldots]=[C^{k-1}_b,\ldots, C^{k-1}_b,C^{k-1}_s\ldots].
$$
Следовательно, первые $len(k-1)$ неполных частных цепных дробей $[\overleftarrow{Q}]$ и $[\overrightarrow{Q}]$ совпадают, а значит
$$
|[\overleftarrow{Q}]-[\overrightarrow{Q}]|\leqslant\frac1{2^{len(k-1)}}\leqslant\frac1{2^{2^{k}}}.
$$
То есть:
$$
\frac{\langle A'\rangle}{\langle A\rangle}>1-\frac1{2^{2^{k}}}.
$$
Аналогичным преобразованием отражения можно добиться, чтобы континуант заканчивался на блок $C^k_1$. Тогда $\langle A'\rangle$ имеет вид:
$$
\langle A'\rangle=\langle \underbrace{C^k_0, \ldots, C^k_0}_{l_0},C^k_1,\underbrace{C^k_0, \ldots, C^k_0}_{l_1},C^k_1, \ldots, C^k_1\underbrace{C^k_0, \ldots, C^k_0}_{l_{d-1}},C^k_1,\underbrace{C^k_0, \ldots, C^k_0}_{l_d},C^k_1\rangle.
$$
3)~Описанными выше увеличивающими преобразованиями (\ref{mainotr}) можно добиться, чтобы все $l_i$ кроме $l_0$ отличались не более чем на $1$, а ${l_0\leqslant \max\limits_{1\leqslant i\leqslant d}l_i.}$ Если $\max\limits_{1\leqslant i\leqslant d}l_i-l_0\leqslant 1,$ то  $\langle A'\rangle$ имеет блоковую структуру $k+1-$го уровня, и индуктивный переход выполнен. В противном случае существует $i$ такое, что $l_i-l_0>1.$ 
Рассмотрим отражение $Q$:
$$
\langle\underbrace{C^k_0, \ldots, C^k_0}_{l_0}\overbrace{C^k_1,\underbrace{C^k_0, \ldots, C^k_0}_{l_1},\ldots,\underbrace{C^k_0, \ldots, C^k_0}_{l_{i-1}}C^k_1\underbrace{C^k_0, \ldots, C^k_0\smallsetminus C^k_{tail}}_{l_i-l_0}}^{\overrightarrow{Q}},C^k_{tail},\underbrace{C^k_0, \ldots, C^k_0}_{l_0}C^k_1\ldots\rangle.
$$
Данное преобразование, аналогично, уменьшает континуант не более, чем в $1+\frac1{2^{2^k-1}}$ раз.  В результате отражения получим континуант
$$
\langle \underbrace{C^k_0, \ldots, C^k_0}_{l_i},C^k_1,\underbrace{C^k_0, \ldots, C^k_0}_{l_{i-1}},C^k_1, \ldots, C^k_1\underbrace{C^k_0, \ldots, C^k_0}_{l_1},C^k_1,\underbrace{C^k_0, \ldots, C^k_0}_{l_0},C^k_1,\ldots\rangle,
$$
который увеличивающими преобразованиями (\ref{mainotr}) приводится к виду
$$
\langle A''\rangle=\langle \underbrace{C^k_0, \ldots, C^k_0}_{\overline{l_0}},C^k_1,\underbrace{C^k_0, \ldots, C^k_0}_{\overline{l_1}},C^k_1, \ldots, C^k_1\underbrace{C^k_0, \ldots, C^k_0}_{\overline{l_{d-1}}},C^k_1,\underbrace{C^k_0, \ldots, C^k_0}_{\overline{l_d}},C^k_1\rangle,
$$
где $\overline{l_0}=l_i,$ а все $\overline{l_j}$ отличаются друг от друга не более, чем на $1$, $|\overline{l_i}-\overline{l_j}|\leqslant1$. Таким образом, на каждом шаге уменьшающие преобразования уменьшают континуант не более, чем в $\left(1+\frac1{2^{2^k-1}}\right)^3$ раз. Следовательно, в результате $k$ шагов континуант под действием уменьшающихся преобразований уменьшится  суммарно не более, чем в $\prod\limits_{i=1}^{\infty}\left(1+\frac1{2^{2^i-1}}\right)^3<8$ раз.

Докажем утверждение \textbf{(ii)}.\\
Применяя $k$ раз утверждение \textbf{(i)} можно произвольный континуант $\langle A\rangle$ из $M_3(n, S_n)$ увеличивающими преобразованиями отражения перевести в континуант $\langle A'_m\rangle$, имеющий блоковую структуру $k+1-$го уровня. Поскольку данная структура по условию леммы является вырожденной, конец цепочки преобразований не зависит от выбора начального континуанта в $M_3(n, S_n)$. А значит, поскольку под действием уменьшающих преобразований континуант уменьшится  суммарно не более, чем в $8$ раз, по лемме \ref{minalgor} $\langle A'_m\rangle$ отличается от максимума по множеству $M_3(n, S_n)$ не более, чем в $8$ раз.

Таким образом, теорема доказана полностью.
\end{proof}
\begin{foll}
\label{levb}
Если $\langle A_m\rangle=\max(M_3(n,S_n)),$ то\\
\textbf{(i)}~Cуществует такое натуральное $k,$ что $\langle A_m\rangle$ не более, чем в $8$ раз отличается от некоторого алгоритмически построимого континуанта  $\langle A'_m\rangle$ из $M_3(n,S_n),$ имеющего вырожденную блоковую структуру $k-$го уровня.\\
\textbf{(ii)}~$\langle \underbrace{A'_m, A'_m,\ldots, A'_m}_{l}\rangle$ не более, чем в $8$ раз отличается от $\max(M_3(ln,lS_n))$
\end{foll}
\begin{proof}
\quad\\
\textbf{(i)}~Применим к  $\langle A_m\rangle$ рекурсивный алгоритм поиска максимума. В результате данного алгоритма он перейдет в $\langle A'_m\rangle$ под действием некоторой цепочки отражений. Как следует из теоремы \ref{maxs} $\langle A'_m\rangle$  не более, чем в $8$ раз отличается от $\langle A_m\rangle,$ что и требовалось доказать.\\
\textbf{(ii)}~Поскольку $\langle A'_m\rangle$ имеет вырожденную блоковую структуру $k-$го уровня для некоторого натурального $k,$ то $\langle \underbrace{A'_m, A'_m,\ldots, A'_m}_{l}\rangle$ также имеет вырожденную блоковую структуру $k-$го уровня. Для завершеия доказательства следствия остается только воспользоваться утверждением  \textbf{(ii)} теоремы \ref{maxs}.
\end{proof}
Таким образом, задача на поиск асимптотики  максимума решена, предъявлен алгоритм, позволяющий прийти к нему за конечное число шагов. Назовем результат работы алгоритма \textit{асимптотическим максимумом} по множеству $M(n,S_n)$ и обозначим его $\max_a(n, S_n).$ Из теорем \ref{edvar}, \ref{thto3} и \ref{maxs} следует, что существует некоторая не зависящая от $n$ и $S_n$ константа $c_0,$ что выполнено неравенство:
$$
1\leqslant\frac{\max(n, S_n)}{\max_a(n, S_n)}<c_0.
$$
Докажем лемму об асимптотике периодического континуанта.
\begin{lem}
\label{count}
\begin{equation}
\langle\underbrace{A,\ldots,A}_{n}\rangle\sim c(\langle A\rangle+\langle  A^-\rangle\lambda)^n,
\end{equation}
где $\lambda=[\overline{A}] -$ квадратичная иррациональность, а $c$ - некоторая, зависящая от $A,$ но не зависящая от $n$ константа.
\end{lem}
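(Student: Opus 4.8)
The plan is to realize the $n$-fold repeated continuant as an entry of a matrix power and then diagonalize that matrix. Write $m=|A|$ and introduce the transfer matrix
$$M_A=\begin{pmatrix}a_1&1\\1&0\end{pmatrix}\begin{pmatrix}a_2&1\\1&0\end{pmatrix}\cdots\begin{pmatrix}a_m&1\\1&0\end{pmatrix}=\begin{pmatrix}\langle A\rangle&\langle A^-\rangle\\\langle A_-\rangle&\langle A^-_-\rangle\end{pmatrix};$$
the second equality is the standard continuant identity, proved by induction on $m$ from the continuant recursion (with the conventions $\langle\,\rangle=1$ and that the "word of length $-1$" contributes $0$, which is what makes the formula valid already for $m=1$). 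Since $M_A^{\,n}$ is the transfer matrix of the concatenation $(A,A,\dots,A)$ of $n$ copies of $A$, its $(1,1)$-entry equals $\langle\underbrace{A,\dots,A}_{n}\rangle$, so it suffices to find the asymptotics of $(M_A^{\,n})_{11}$ as $n\to\infty$.

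The eigenvalues $\mu_1,\mu_2$ of $M_A$ satisfy $\mu_1\mu_2=\det M_A=(-1)^m$. The key step is to exhibit $(1,\lambda)^{\mathsf T}$ as an eigenvector, where $\lambda=[\overline A]$. Since $[\overline A]=[\,A,\overline A\,]$, expanding by $(\ref{A1})$ and the continuant recursion gives the familiar relation between a purely periodic continued fraction and the continuants of its period,
$$\lambda=\frac{\langle A_-\rangle+\lambda\langle A^-_-\rangle}{\langle A\rangle+\lambda\langle A^-\rangle};$$
clearing denominators shows this is equivalent to $M_A\binom{1}{\lambda}=\bigl(\langle A\rangle+\langle A^-\rangle\lambda\bigr)\binom{1}{\lambda}$. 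Hence $\mu_1:=\langle A\rangle+\langle A^-\rangle\lambda$ is an eigenvalue of $M_A$; and since $\langle A\rangle\ge1$, $\langle A^-\rangle\ge1$, $\lambda>0$, we get $\mu_1>1$, so $|\mu_2|=1/\mu_1<1<\mu_1$. Thus $\mu_1$ is the strictly dominant eigenvalue and $M_A$ has two distinct real eigenvalues, hence is diagonalizable. (Equivalently, positivity of the eigenvector $(1,\lambda)^{\mathsf T}$ together with Perron--Frobenius yields the same conclusion.)

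Diagonalizing, $(M_A^{\,n})_{11}=c\,\mu_1^{\,n}+c'\,\mu_2^{\,n}$ with $c,c'$ depending only on $A$. It remains to check $c\ne0$: otherwise $\langle\underbrace{A,\dots,A}_{n}\rangle=c'\mu_2^{\,n}$ would be bounded, contradicting the elementary lower bound $\langle b_1,\dots,b_\ell\rangle\ge F_{\ell+1}$ (the $(\ell+1)$-st Fibonacci number) for a continuant of length $\ell=mn\to\infty$. Since $|\mu_2/\mu_1|<1$, we conclude
$$\langle\underbrace{A,\dots,A}_{n}\rangle=c\,\mu_1^{\,n}\bigl(1+o(1)\bigr)\sim c\,\bigl(\langle A\rangle+\langle A^-\rangle\lambda\bigr)^{\,n},$$
which is the assertion, with $\lambda=[\overline A]$ a quadratic irrational and $c$ depending on $A$ but not on $n$. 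There is no genuinely hard step here; the only points requiring attention are the boundary conventions for $\langle\,\cdot\,\rangle$ that make the matrix identity hold down to $m=1$, and the bookkeeping confirming that the eigenvalue picked out by the eigenvector $(1,\lambda)^{\mathsf T}$ is the dominant one.
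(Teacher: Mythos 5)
Your argument is correct, but it follows a genuinely different route from the paper's. The paper never introduces a matrix: it applies the splitting identity (\ref{knu}) to the first copy of $A$ to obtain the telescoping recursion
$$
\langle\underbrace{A,\ldots,A}_{n}\rangle=\langle\underbrace{A,\ldots,A}_{n-1}\rangle\Bigl(\langle A \rangle+\langle A^- \rangle[\underbrace{A,\ldots,A}_{n-1}]\Bigr),
$$
hence $\langle\underbrace{A,\ldots,A}_{n}\rangle=\langle A \rangle\prod_{i=2}^{n}\bigl(\langle A \rangle+\langle A^- \rangle[\underbrace{A,\ldots,A}_{i-1}]\bigr)$, and then shows that the ratio of this product to $(\langle A\rangle+\langle A^-\rangle\lambda)^n$ tends to a constant, because $[\underbrace{A,\ldots,A}_{i-1}]$ approaches $\lambda$ geometrically fast and the resulting infinite product converges absolutely. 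Your transfer-matrix computation proves the same fixed-point relation $\lambda\bigl(\langle A\rangle+\lambda\langle A^-\rangle\bigr)=\langle A_-\rangle+\lambda\langle A^-_-\rangle$ that is implicit in the paper's use of $[\overline{A}]=[A,\overline{A}]$, but packages the conclusion as a spectral decomposition. What your version buys is slightly more: an exact two-term formula $c\mu_1^{\,n}+c'\mu_2^{\,n}$ with $|\mu_2|=\mu_1^{-1}$ (since $\det M_A=(-1)^{m}$, $m$ the length of $A$), hence an explicit exponentially small relative error and a transparent reason why the correction term is negligible. What the paper's version buys is self-containedness: it stays entirely within the continuant identities (\ref{A1}) and (\ref{knu}) already established and needs no discussion of diagonalizability or of the boundary conventions for words of length $0$ and $-1$ that make your matrix identity valid at $m=1$. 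Both arguments require one small non-degeneracy check, which you handle correctly by ruling out $c=0$ via the Fibonacci lower bound on continuants.
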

\begin{proof}
Пользуясь правилом раскрытия континуантов (\ref{knu}), получаем:
$$
\langle\underbrace{A,\ldots,A}_{n}\rangle=\langle A \rangle\langle\underbrace{A,\ldots,A}_{n-1}\rangle+\langle A^- \rangle\langle A_-\underbrace{A,\ldots,A}_{n-2}\rangle=\langle\underbrace{A,\ldots,A}_{n-1}\rangle(\langle A \rangle+\langle A^- \rangle[\underbrace{A,\ldots,A}_{n-1}]).
$$
Следовательно:
$$
\langle\underbrace{A,\ldots, A}_{n}\rangle=\langle A \rangle\prod\limits_{i=2}^n(\langle A \rangle+\langle A^- \rangle[\underbrace{A,\ldots,A}_{i-1}]).
$$
Докажем, что отношение этого выражения к $(\langle A\rangle+\langle  A^-\rangle\lambda)^n$ стремится к константе. Рассмотрим отношение
$$
\prod\limits_{i=2}^{\infty}\frac{\langle A\rangle+\langle  A^-\rangle\lambda}{\langle A \rangle+\langle A^- \rangle[\underbrace{A,\ldots,A}_{i-1}]}=\prod\limits_{i=2}^{\infty}\frac{\langle A\rangle+\langle  A^-\rangle([\overbrace{A,\ldots, A}^{i-1}]-r_n)}{\langle A \rangle+\langle A^- \rangle[\underbrace{A,\ldots,A}_{i-1}]},
$$
где $r_n=[\underbrace{A,\ldots,A}_{i-1}]-\lambda.$\\ Поскольку $|r_n|<\frac1{2^n},$ получаем:
$$
\prod\limits_{i=2}^{\infty}\frac{\langle A\rangle+\langle  A^-\rangle([\overbrace{A,\ldots,A}^{i-1}]-r_n)}{\langle A \rangle+\langle A^- \rangle[\underbrace{A,\ldots,A}_{i-1}]}=\prod\limits_{i=2}^{\infty}(1-\frac{\langle A^-\rangle r_n}{\langle A \rangle+\langle A^- \rangle[\underbrace{A,\ldots, A}_{i-1}]}).
$$
По известному свойству это бесконечное произведение сходится тогда и только тогда, когда абсолютно сходится ряд
$$
\sum\limits_{i=2}^{\infty}r_n\frac{\langle A^-\rangle}{\langle A \rangle+\langle A^- \rangle[\underbrace{A,\ldots,A}_{i-1}]},
$$
что выполнено, поскольку выражение
$$
\frac{\langle A^-\rangle}{\langle A \rangle+\langle A^- \rangle[\underbrace{A,\ldots,A}_{i-1}]}
$$
ограниченно положительными константами. А следовательно, бесконечное произведение имеет предел, лемма доказана.
\end{proof}
\section{Доказательства теорем} 
Доказательство теоремы \ref{res1}.
\begin{proof}
Пользуясь леммой \ref{lemle}, получаем
$$
\frac{g_{\varphi^{-1}}(x+\delta)-g_{\varphi^{-1}}(x)}{\delta}\geqslant \frac{q_tq_{t-1}}{\varphi^{S^{\varphi}_t(x)-5}}\geqslant \frac{\varphi^{2t-1}}{\varphi^{\frac{t\varkappa_{inf}(x)}2}},
$$
что стремится к $+\infty$ при $\varkappa_{inf}(x)<4.$ Первая часть теоремы доказана.

Для доказательства второй части выберем следующие параметры:
$$\frac12<\alpha\in\mathbb{Q}<1,~~ 0<\varepsilon<\alpha-\frac12,~~
m\in\mathbb{N}:~3m\leqslant\varphi^{m\varepsilon}~\text{и при этом}~m\alpha\in\mathbb{N}.$$
Рассмотрим теперь квадратичную иррациональность $x=[\overline{\underbrace{1,\ldots,1}_{2m-1},\alpha m+1}].$ Для нее
$$
\varkappa_{inf}(x)=1+2\frac{m-1+\alpha m+1}{m}=3+2\alpha.
$$
Поскольку все неполные частные $x$ ограничены, то из леммы \ref{lemge} следет, что для некоторого $t=2Nm$ выполнено
\begin{equation}
\label{le2}
\frac{g_{\varphi^{-1}}(x+\delta)-g_{\varphi^{-1}}(x)}{\delta}\leqslant  C\frac{q^2_{t}}{\varphi^{a_1+2a_2+\ldots+a_{t-1}+2a_{t}}},
\end{equation}
Оценим сверху $q_t$. Поскольку
$$
\langle A,B\rangle\le2\langle A\rangle\langle B\rangle~\text{и}~\langle\underbrace{1,\ldots,1}_{2m-1}\rangle<\varphi^{2m-1},
$$
получаем:
\begin{multline}
\label{o3}
q_t=\langle\overbrace{\underbrace{1,\ldots,1}_{2m-1},\alpha m,\ldots,\underbrace{1,\ldots,1}_{2m-1},\alpha m}^{N~\text{повторений}}\rangle\leqslant2^N{\langle\underbrace{1,\ldots,1}_{2m-1},\alpha m\rangle}^{N}\leqslant\\ \leqslant{(4\alpha m)}^N\varphi^{2mN-N}\leqslant{\biggl(3\alpha m\varphi^{2m}\biggr)}^N\leqslant \varphi^{{(m(2+\varepsilon)}^N)}=\varphi^{mN(2+\varepsilon)}.
\end{multline}
Таким образом, объединяя оценки (\ref{le2}) и (\ref{o3}), имеем:
$$
\frac{g_{\varphi^{-1}}(x+\delta)-g_{\varphi^{-1}}(x)}{\delta}\leqslant  C\frac{\varphi^{mN(4+2\varepsilon)}}{\varphi^{mN(3+2\alpha)}}=C\varphi^{mN(1+2\varepsilon-2\alpha)},
$$
что стремится к $0$ при $N\to \infty$.\\
Таким образом, в силу того, что $\alpha$ можно выбрать сколь угодно близким к $\frac12,$ второе утверждение теоремы доказано.
\end{proof}
Доказательство теоремы \ref{res2}.
\begin{proof}
В доказательстве данного утверждения мы докажем существование $\varkappa_2$ и предъявим алгоритм, позволяющий получить его с любой точностью, а также оценим сверху скорость его сходимости.

Во-первых отметим, что $g'_{\varphi^{-1}}([\overline{7,4}])=0.$ Действительно, по лемме \ref{count} 
$$
q_{2t}=\langle4,7,\ldots,4,7\rangle\asymp(29+4[\overline{7,4}])^t<30^t.
$$
А поскольку 
$$\varphi^{a_1+2a_2+\ldots+a_{2t-1}+2a_{2t}}=\varphi^{15t}>1000^t,$$
получаем, что
$$\lim\limits_{t\to\infty}\frac{q^2_{2t}}{\varphi^{a_1+2a_2+\ldots+a_{2t-1}+2a_{2t}}}=0.$$
Следовательно, по лемме \ref{lemle} производная в точке $[\overline{7,4}]$ существует и равна $0$.\\
Далее, пусть $x=[a_1,\ldots, a_t,\ldots]$ - иррациональное число и $\varkappa_{inf}(x)>15$. Тогда по лемме \ref{lemle}
\begin{equation}
\label{tolemle}
\frac{g_{\varphi^{-1}}(x+\delta)-g_{\varphi^{-1}}(x)}{\delta}\leqslant \frac{q^2_{t}}{\varphi^{S^{\varphi}_t(x)-5}}
\end{equation}
Напомним, что
$\max(M^{\varphi}(t, S_t))$ - максимум по всем континуантам длины $t$ с $S^{\varphi}_t(x)=S_t.$ Очевидно, что
$$
\frac{g_{\varphi^{-1}}(x+\delta)-g_{\varphi^{-1}}(x)}{\delta}\leqslant \frac{q^2_{t}}{\varphi^{S^{\varphi}_t(x)-5}}\leqslant C\frac{\max^2(M^{\varphi}(t, S_t))}{\varphi^{S^{\varphi}_t(x)}}<C_1\frac{\max_a^2(M^{\varphi}(t, S_t))}{\varphi^{S^{\varphi}_t(x)}}.
$$
Поскольку по теореме \ref{thto3}
$\langle A_{max}\rangle=\max_a(M^{\varphi}(t, S_t))\in M_3(t, S_t),$ а ${\varkappa_{inf}(x)>15,}$ то по следствию \ref{to47} любое неполное частное $\langle A_{max}\rangle$ больше либо равно любого соответствующего неполного частного континуанта\\ $\langle B\rangle=\langle\underbrace{7,4,\ldots,7,4}_{\text{t/2 пар}}\rangle.$ Отсюда следует, что $\langle B\rangle$ можно превратить в $\langle A\rangle$ увеличением некоторых неполных частных на $1$ (возможно несколько раз). Однако несложно убедиться, что любая такая замена уменьшает дробь в правой части формулы (\ref{tolemle}). Следовательно $\varkappa_2$ существует и меньше $15$.

Аналогично несложно показать, что $g'_{\varphi^{-1}}([\overline{7,3}])=+\infty$, а значит, ${\varkappa_2>13}$
Тогда из теоремы \ref{thto3} следует, что $\max_a(M^{\varphi}(t, S_t))$ для всех $x$, для которых выполнено
$$
13<\frac{S^{\varphi}_t(x)}t<15
$$
достигается на множестве  состоящем из континуантов $\langle A\rangle$ таких, что $N(A)=(\{7\},\{3,4\})$. Обозачим это множество $C^7_{3,4}$.
Сформулируем следующий простой принцип:\\
Пусть $x=[\overline{A}], y=[\overline{B}]$ - периодические цепные дроби,
${A,B\in C^7_{3,4}},\\
 l(A)=t_1, l(B)=t_2$, причем:
\begin{equation}
\label{aib}
\langle A\rangle=\max_a(M^{\varphi}(t_1,S^{\varphi}_{t_1}(A))),~\langle B\rangle=\max_a(M^{\varphi}(t_2, S^{\varphi}_{t_2}(B)))
\end{equation}

И пусть ${g'_{\varphi^{-1}}(x)=\infty},~{g'_{\varphi^{-1}}(y)=0}.$
Тогда $$\varkappa_{inf}(x)\leqslant \varkappa_2\leqslant\varkappa_{inf}(y).$$
Действительно, пусть $\varkappa_2<\varkappa_{inf}(x)$. Это противоречит определению $\varkappa_2$, поскольку $\varkappa_{inf}(x)>\varkappa_2,$ но
$g'_{\varphi^{-1}}(x)=\infty.$

Если же $\varkappa_2>\varkappa_{inf}(y),$ то это означает, что $\exists z: \varkappa_{inf}(y)<\varkappa_{inf}(z)<\varkappa_2$ и при этом $g'_{\varphi^{-1}}(z)=\infty.$ Пусть $z=[c_1\ldots c_t\ldots],$  тогда
$$
\frac{g_{\varphi^{-1}}(z+\delta)-g_{\varphi^{-1}}(z)}{\delta}\leqslant \frac{q^2_{t}(z)}{\varphi^{S^{\varphi}_t(z)-5}}\leqslant \frac{\max^2(M^{\varphi}(t,~S^{\varphi}_t(z)))}{\varphi^{S^{\varphi}_t(z)-5}}.
$$
\begin{lem}
Функция
\begin{equation}
\label{ftstf}
f(S^{\varphi}_t(z))=\frac{{\max_a}^2(M^{\varphi}(t,~S^{\varphi}_t(z)))}{\varphi^{S^{\varphi}_t(z)}}
\end{equation}
при достаточно большом $t$ убывает с ростом $S^{\varphi}_t(z)$ при $13<\frac{S^{\varphi}_t(z)}{t}<15.$ 
\end{lem}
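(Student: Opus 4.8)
Зафиксируем большое $t$ и положим $\langle A\rangle=\max_a(M^{\varphi}(t,S_t))$. Из теоремы~\ref{thto3}, следствия~\ref{to47} и проведённого выше разбора диапазона, отвечающего множеству $C^7_{3,4}$, вытекает, что в рассматриваемом случае $\langle A\rangle$ реализуется на континуанте, у которого все неполные частные одной чётности равны $7$, а другой равны $3$ или $4$. Функция $f$ определена на всех целых $S_t$ из диапазона, и достаточно показать, что существует не зависящее от $t$ и $S_t$ число $m_0$, для которого $f(S_t+m)<f(S_t)$ при всех $m\ge m_0$. Переход именно к $f(S_t+m)$, а не сразу к $f(S_t+1)$, по существу неизбежен: величина $\max_a$ известна лишь с точностью до универсальной константы $c_0$ предыдущего раздела, а содержание леммы в том, что экспоненциальный выигрыш на единицу прироста $S_t$ со временем подавляет эту фиксированную потерю.

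\textbf{Пошаговая оценка.} Пусть $\langle C\rangle$ реализует $\max(M^{\varphi}(t,S+1))$ для допустимого $S$ из диапазона. По теореме~\ref{edvar} имеем $\langle C\rangle\in M_4$; так как $S^{\varphi}(\langle C\rangle)$ лежит в том же диапазоне, то, перебирая четыре возможных значения суммы на паре соседних неполных частных, получаем, что наибольшее неполное частное $a^{*}$ континуанта $\langle C\rangle$ не меньше $5$. Уменьшив $a^{*}$ на единицу, получим по формуле~(\ref{kor}) континуант $\langle C'\rangle$ той же длины, для которого
$$
\frac{\langle C\rangle}{\langle C'\rangle}=\frac{a^{*}+[\overleftarrow P]+[\overrightarrow R]}{a^{*}-1+[\overleftarrow P]+[\overrightarrow R]}\le\frac{a^{*}}{a^{*}-1}\le\frac54 ,
$$
а $S^{\varphi}$ при этом уменьшается на $1$ или на $2$. Функция $S\mapsto\max(M^{\varphi}(t,S))$ строго возрастает (прибавление единицы к неполному частному нечётного индекса максимального континуанта увеличивает и сам континуант, и $S^{\varphi}$ на $1$), поэтому спуск от суммы $S+m$ к сумме, не превосходящей $S$, занимает не более $m$ таких шагов, и
$$
\max(M^{\varphi}(t,S+m))\le(5/4)^{m}\,\max(M^{\varphi}(t,S)).
$$
Отсюда, пользуясь неравенствами $\max_a\le\max\le c_0\max_a$, получаем
$$
f(S+m)=\frac{\max_a^{2}(M^{\varphi}(t,S+m))}{\varphi^{S+m}}\le c_0^{2}\left(\frac{25}{16\varphi}\right)^{m}f(S).
$$
Поскольку $\tfrac{25}{16\varphi}<1$, при всех $m\ge m_0:=\lceil 2\log c_0/\log\tfrac{16\varphi}{25}\rceil+1$ выполнено $f(S+m)<f(S)$ с $m_0$, не зависящим от $t$ и $S$. Это и есть требуемое убывание $f$ по $S_t$ с точностью до фиксированного сдвига $m_0$, и его достаточно для применения леммы в доказательстве теоремы~\ref{res2}.

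\textbf{Усиление до пошаговой монотонности; основная трудность.} Чтобы заменить убывание с точностью до сдвига на буквальное $f(S+1)<f(S)$, надо избавиться от множителя $c_0$, то есть сравнивать $\max_a(M^{\varphi}(t,S))$ и $\max_a(M^{\varphi}(t,S+1))$ непосредственно как континуанты. Рекурсивный алгоритм поиска максимума описывает оба как почти периодические сбалансированные наборы блоков $(7,3)$ и $(7,4)$, так что по лемме~\ref{count} $\tfrac2t\log\max_a(M^{\varphi}(t,S_t))\to\Lambda(q)$, где $q\in(0,1)$ — предельная доля блоков $(7,4)$ и $S_t/t\sim\tfrac12(13+2q)$; тогда $\log f(S_t)\sim\tfrac t2\bigl(2\Lambda(q)-(13+2q)\log\varphi\bigr)$, и лемма сводится к неравенству $\Lambda'(q)<\log\varphi$ на $(0,1)$. Концы отрезка вычисляются явно: $\Lambda(0)=\log(22+7[\overline{7,3}])$, $\Lambda(1)=\log(29+7[\overline{7,4}])$, причём $\Lambda(1)-\Lambda(0)\approx 0.27<\log\varphi\approx 0.48$. Главная работа, которую я здесь ожидаю,— превратить эту оценку в среднем в поточечную оценку на $\Lambda'$: для этого нужна либо выпуклость (аналитичность) функции давления $\Lambda$ на $[0,1]$, либо прямая, равномерная по позиции оценка изменения сбалансированного континуанта при добавлении одного блока $(7,4)$ при помощи тождеств~(\ref{comparecont}), (\ref{kor}) и лемм о монотонности. Я бы сначала довёл до конца приведённую выше версию с точностью до сдвига (её хватает для всего дальнейшего), и лишь затем проводил этот более тонкий анализ.
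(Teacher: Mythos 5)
Ваш основной аргумент («Пошаговая оценка») корректен и доказывает лемму ровно в той силе, в какой её доказывает и сама статья: и у вас, и в тексте работы убывание устанавливается лишь с точностью до множителя $c_0^2$, то есть гарантируется только при $\Delta S$ больше фиксированной константы; именно это и означает оговорка «при достаточно большом $t$», поскольку в применении (теорема \ref{res2}) сравниваемые суммы различаются на величину порядка $t$. Поэтому весь ваш третий раздел про «усиление до пошаговой монотонности» и функцию давления не нужен: статья буквальной монотонности $f(S+1)<f(S)$ тоже не доказывает и нигде не использует. Маршрут при этом у вас другой. Статья берёт асимптотические максимумы $\langle A_1\rangle,\langle A_2\rangle$, которые по теореме \ref{thto3} лежат в $C^7_{3,4}$, и превращает больший по сумме в меньший заменами $4\to3$ на чётных позициях; каждая замена уменьшает $S$ на $2$ и континуант не более чем в $\tfrac43$ раза, что даёт $f(S_2)/f(S_1)<c_0^2\bigl(\tfrac43\bigr)^{\Delta S}\varphi^{-\Delta S}<c_0^2\bigl(\tfrac89\bigr)^{\Delta S}$. Вы вместо этого уменьшаете наибольшее неполное частное истинного максимума и пользуетесь монотонностью $S\mapsto\max(M^{\varphi}(t,S))$, получая базу $\tfrac{25}{16\varphi}\approx0.966$ — запас заметно меньше, чем $\tfrac89$ у статьи, но всё ещё меньше единицы, и ваш вывод не требует структурной теоремы о сведении к трём неполным частным.

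Единственное место, требующее починки: ссылка «по теореме \ref{edvar} имеем $\langle C\rangle\in M_4$» неверна — теорема \ref{edvar} утверждает лишь $\max(M(n,S_n))\asymp\max(M_4(n,S_n))$, а не принадлежность истинного максимума множеству $M_4$. К счастью, нужное вам неравенство $a^*\geqslant5$ получается и без этого: среднее по парам значение $a_{2i-1}+2a_{2i}$ больше $13$, значит, найдётся пара с $a_{2i-1}+2a_{2i}\geqslant13$, откуда $\max(a_{2i-1},a_{2i})\geqslant5$. С этой заменой рассуждение замыкается, причём оценка $\frac{a^*+[\overleftarrow{P}]+[\overrightarrow{R}]}{a^*-1+[\overleftarrow{P}]+[\overrightarrow{R}]}\leqslant\frac{a^*}{a^*-1}\leqslant\frac54$ и строгая монотонность максимума по $S$ (прибавление единицы к неполному частному нечётного индекса) проверяются верно.
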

\begin{proof}
Действительно, если
$$S^{\varphi}_t(z_1)>S^{\varphi}_t(z_2)~\text{и}~
\langle A_1\rangle={\max}_a(M^{\varphi}(t,~S^{\varphi}_t(z_1))),
\langle A_2\rangle={\max}_a(M^{\varphi}(t,~S^{\varphi}_t(z_2))),$$
то по теореме \ref{thto3} ${A_1,A_2\in C^7_{3,4}}.$ Докажем неравенство
\begin{equation}
\label{43frac}
\frac{\langle A_2\rangle}{\langle A_1\rangle}<c_0\biggl(\frac43\biggr)^{(S^{\varphi}_t(z_2)-S^{\varphi}_t(z_1))/2}.
\end{equation}
Возьмем любые $\frac{S^{\varphi}_t(z_2)-S^{\varphi}_t(z_1)}2$ неполных частных $\langle A_2\rangle,$ равных $4$ и заменим их на $3$. Так как каждая такая замена уменьшает континуант не более, чем в $\frac43$ раза, имеем оценку
$$\frac{\langle A_2\rangle}{\langle A'_2\rangle}<\biggl(\frac43\biggr)^{(S^{\varphi}_t(z_2)-S^{\varphi}_t(z_1))/2}.$$
А поскольку
$S^{\varphi}_t(A'_2)=S^{\varphi}_t(A_1), \text{то}~\langle A'_2\rangle<c_0{\max}_a(M^{\varphi}(t,~S^{\varphi}_t(z_1)))=c_0\langle A_1\rangle$, что доказывает неравенство (\ref{43frac}).\\
Далее,
\begin{equation}
\label{abphi}
\frac{\varphi^{(S^{\varphi}_t(z_2))}}{\varphi^{(S^{\varphi}_t(z_1))}}=\varphi^{(S^{\varphi}_t(z_2)-S^{\varphi}_t(z_1))}>\biggl(\frac32\biggr)^{(S^{\varphi}_t(z_2)-S^{\varphi}_t(z_1))},
\end{equation}
откуда, подставляя (\ref{43frac}) и (\ref{abphi}) в (\ref{ftstf}), получаем убывание функции $f(S^{\varphi}_t(z))$ при достаточно большом $t.$
\end{proof}
Таким образом,
$$
\frac{\max_a^2(M(t, S^{\varphi}_t(z)))}{\varphi^{S^{\varphi}_t(z)}}\leqslant \frac{\max_a^2(t, S^{\varphi}_t(y)}{\varphi^{S^{\varphi}_t(y)}}\leqslant  C_1\frac{q^2_t(y))}{\varphi^{S^{\varphi}_t(y)}},
$$
где последнее неравенство выполнено по лемме \ref{simlem}. А из того, что
$$
{\lim\limits_{t\to\infty}\frac{q^2_t(y))}{\varphi^{S^{\varphi}_t(y)}}=0},
$$
мы  получаем противоречие с тем, что $g'_{\varphi^{-1}}(z)=+\infty.$

Доказанный принцип позволяет найти $\varkappa_2$ с любой точностью. Вычисления показывают, что для $$x=[\overline{\underbrace{7,3,\ldots,7,3}_{\text{37 пар}}7,4}]~\quad g'_{\varphi^{-1}}(x)=0,$$
а для $$y=[\overline{\underbrace{7,3,\ldots,7,3}_{\text{38 пар}}7,4}]~\quad g'_{\varphi^{-1}}(y){=\infty}.$$
Следовательно 
$$
13.0513\asymp13\frac2{39}<\varkappa_2<13\frac2{38}\asymp13.0526.
$$
Проводя итерации алгоритма с блоками все более высокого уровня можно сосчитать $\varkappa_2$ с любой требуемой точностью. Оценим скорость сходимости алгоритма.

Прежде всего рассмотрим для введенных в (\ref{aib}) континуантов $\langle A\rangle$ и $\langle B\rangle$  континуанты $\langle A'\rangle\in M_3(t_1,S^{\varphi}_{t_1}(A))$ и $\langle B'\rangle\in M_3(t_2,S^{\varphi}_{t_2}(B)),$ являющиеся асимптотическими максимумами по соответствующим множествам. Ввиду теоремы \ref{maxs} они имеют вырожденную блоковую структуру. Напомним, что $t_1$ и $t_2$ - это длины $\langle A\rangle$ и $\langle B\rangle$ cоответственно. В силу теоремы \ref{maxs} и следствия \ref{levb} для любого натурального $i$ выполнены оценки
$$
1\leqslant\frac{\langle \overbrace{A,\ldots, A}^{i}\rangle}{\langle \underbrace{A',\ldots,A'}_{i}\rangle}<8,\quad1\leqslant\frac{\langle \overbrace{B,\ldots,B}^{i}\rangle}{\langle \underbrace{B',\ldots,B'}_{i}\rangle}<8.
$$
Следовательно, 
$$
g'_{\varphi^{-1}}([\overline{A}])=g'_{\varphi^{-1}}([\overline{A'}])=\infty,\quad g'_{\varphi^{-1}}([\overline{B}])=g'_{\varphi^{-1}}([\overline{B'}])=0.
$$
Без ограничения общности будем считать, что $t_1=t_2,$ поскольку в противном случае мы можем перейти к рассмотрению цепных дробей 
$$
[\overline{\underbrace{A',\ldots,A'}_{t_2}}]=[\overline{A'}]~~\text{и}~~[\overline{\underbrace{B',\ldots,B'}_{t_1}}]=[\overline{B'}],
$$
имеющих одинаковую длину. Рассмотрим континуант $\langle A',B'\rangle,$ обозначим его $\langle C'\rangle.$ Очевидно, что $l(C')=2t_1,$ a $S^{\varphi}_{2t_1}(C')=S^{\varphi}_{t_1}(A')+S^{\varphi}_{t_1}(B').$ Это означает, что
$$
\varkappa_{inf}([\overline{C'}])=\frac{\varkappa_{inf}([\overline{A'}])+\varkappa_{inf}([\overline{B'}])}2.
$$
Обозначим $\langle C\rangle=\max(M_3(2t_1, S^{\varphi}_{2t_1}(C'))).$ Найдем, чему равна производная в точке $[\overline{C}].$ Если она равна $0,$ то по сформулированному выше принципу максимума 
$$
\varkappa_{inf}([\overline{A'}])<\varkappa_2<\varkappa_{inf}([\overline{C'}]).
$$
Если, напротив, $g'_{\varphi^{-1}}([\overline{C'}])=\infty,$ то, аналогично,
$$
\varkappa_{inf}([\overline{C'}])<\varkappa_2<\varkappa_{inf}([\overline{B'}]).
$$
Таким образом, за один шаг алгоритма отрезок, на котором лежит $\varkappa_2$ уменьшается в $2$ раза. Следовательно, для того, чтобы найти $\varkappa_2$ c точностью $\varepsilon$ требуется не более $\log{\varepsilon^{-1}}$ шагов алгоритма. 
\end{proof}
Доказательство теоремы \ref{res3}.
\begin{proof}
Заметим, что $g'_{\varphi^{-1}}([\overline{1,2}])=+\infty,$ поскольку по лемме~\ref{count} $$q^2_{2t}=(3+(\sqrt{3}-1))^{2t}>13^t$$ и знаменатель дроби из формулы (\ref{mainlemge}) равен $\varphi^{S^{\varphi}_{2t}(x)}=\varphi^{5t}<12^t$, а значит, по данной лемме производная существет и равна $+\infty.$

Пусть теперь $x$-произвольное число с неполными частными, ограниченными $2.$ Рассматривая также для него дробь 
\begin{equation}
\label{finfrac}
\frac{q^2_{2t}(x)}{\varphi^{S^{\varphi}_{2t}(x)}},
\end{equation}
заметим, что континуант $q_{2t}(x)$ получен из континуанта $$\langle\underbrace{1,2,\ldots,1,2}_{\text{t пар}}\rangle$$ заменой некоторых единиц на двойки для нечетных неполных частных и заменой двоек на единицы для четных неполных частных. Нетрудно видеть, что обе замены увеличивают дробь (\ref{finfrac}). Действительно, замена $1$ на $2$ по формуле (\ref{kor}) увеличивает континуант не менее, чем в $\frac43$ раза, а значит числитель увеличится как минимум, в $\frac{16}9$ раза в то время, как знаменатель увеличится в $\varphi$ раз, следовательно дробь (\ref{finfrac}) возрастет. Аналогично, поскольку все цепные дроби с неполными частными из нашего континуанта меньше $\frac13$, замена $2$ на $1$ уменьшит континуант не более, чем в $$\frac{2+\frac23}{1+\frac23}=\frac85<\varphi$$ раз, а следовательно дробь (\ref{finfrac}) также увеличится.

Для доказательства последней части утверждения заметим, что\\  $g'_{\varphi^{-1}}([\overline{1,3}])=0,$ что проверяется аналогично. Теорема доказана полностью.
\end{proof}
Доказательство теоремы \ref{res4}.
\begin{proof}
Докажем, что если $x=[\overline{\overrightarrow{A}}]$ - периодическая цепная дробь, причем длина $A$ четная, а $y=[\overline{\overleftarrow{A}}]$, то $g'_{\varphi^{-1}}(x)=g'_{\tau}(y).$ Действительно, поскольку для любого $m\in\mathbb{N}$ выполнено
$$
\langle\underbrace{\overrightarrow{A},\ldots, \overrightarrow{A}}_{m}\rangle=\langle\underbrace{\overleftarrow{A},\ldots, \overleftarrow{A}}_{m}\rangle
$$
то для любого натурального $t~ q_t(x)\asymp q_t(y).$ Аналогично, поскольку для любого $m\in\mathbb{N}$ выполнено
$$
S^{\varphi}(\underbrace{\overrightarrow{A},\ldots,\overrightarrow{A}}_{m})=S^{\tau}(\underbrace{\overleftarrow{A},\ldots, \overleftarrow{A}}_{m})
$$
то для любого натурального $t~ S^{\varphi}_t(x)\asymp S^{\varphi}_t(y).$ Теперь наше утверждение автоматически следует из лемм части 4.

Отсюда следует, что константы в теоремах  \ref{res1}, \ref{res2} и \ref{res3}  для $g_{\tau}(x)$ те же самые и утверждения теорем  \ref{res1}, \ref{res2} и \ref{res3} для них доказываются аналогично.
\end{proof}

\end{document}